\RequirePackage{amsthm}
\documentclass[sn-mathphys-num,pdflatex]{sn-jnl}

\usepackage{fix-cm}
\usepackage{graphicx}
\usepackage{multirow}
\usepackage{amsmath,amssymb,amsfonts}
\usepackage{amsthm}
\usepackage{mathrsfs}
\usepackage[title]{appendix}
\usepackage{xcolor}
\usepackage{textcomp}
\usepackage{manyfoot}
\usepackage{booktabs}
\usepackage{listings}
\usepackage{tabularray}
\usepackage{overpic}
\usepackage{mathtools}
\usepackage{enumitem}
\usepackage{hyperref}
\usepackage{tikz}
\usetikzlibrary{positioning}

\theoremstyle{thmstyleone}%
\newtheorem{theorem}{Theorem}
\newtheorem{proposition}[theorem]{Proposition}
\newtheorem{corollary}[theorem]{Corollary}
\newtheorem{lemma}[theorem]{Lemma}

\newtheorem{conjecture}[theorem]{Conjecture}

\theoremstyle{thmstyletwo}%
\newtheorem{example}{Example}
\newtheorem{remark}{Remark}

\theoremstyle{thmstylethree}%
\newtheorem{definition}{Definition}

\renewcommand{\leq}{\leqslant}

\renewcommand{\geq}{\geqslant}

\newcommand{\cB}{\mathcal{B}}

\newcommand{\cD}{\mathcal{D}}

\newcommand{\cH}{\mathcal{H}}

\newcommand{\cK}{\mathcal{K}}

\newcommand{\cM}{\mathcal{M}}

\newcommand{\cP}{\mathcal{P}}

\newcommand{\cT}{\mathcal{T}}

\newcommand{\F}{\mathbb{F}}

\newcommand{\wt}{\operatorname{wt}}
\newcommand{\per}{\operatorname{per}}
\newcommand{\sgn}{\operatorname{sgn}}

\newcommand{\vTwo}{\nu_2}
\newcommand{\whH}{\widehat H}
\newcommand{\eps}{\varepsilon}
\newcommand{\bs}{\textmd{-}}

\newif\ifshowopenproblems
\showopenproblemsfalse

\begin{document}

\title[The Two-Hole Problem]
{Perfect Matchings with Prescribed Differences Beyond Hall: The Two-Hole Problem}

\author[1]{\fnm{Aryeh~Lev} \sur{Zabokritskiy (Yohananov)}}
\email{yuhanalev@telhai.ac.il}

\affil[1]{%
  \orgdiv{Department of Computer Science},
  \orgname{MIGAL -- Galilee Research Institute/Tel-Hai University of Kiryat Shmona and the Galilee},
  \orgaddress{\city{Kiryat Shmona}, \country{Israel}}
}

\abstract{
The Balister--Gy\H{o}ri--Schelp (BGS) conjecture asks whether every zero-sum list of
$2^{s-1}$ nonzero vectors in $\F_2^s$ is the prescribed-difference profile of
a perfect matching.  The conjecture remains open in general, whereas the
classical Hall hyperplane case is solved when all prescribed differences cross
between two affine copies of a hyperplane.  We isolate the smallest mixed case
beyond Hall: exactly two prescribed differences are internal.  Although only
two requests have changed type, the complete Hall permutation is replaced by
a prescribed-difference bijection between two punctured copies of the
hyperplane, with two unknown deleted vertices on each side.  We call this the
\emph{two-hole problem}.

We develop a new combinatorial method for prescribed-difference matchings,
based on counting and the character structure of the binary vector space.
Unlike the known Hall-type methods, which construct a matching through a
sequence of local algorithmic choices, our approach proves existence through
a global noncancellation phenomenon.  This loss of algorithmic structure is
compensated by a different advantage: the method can retain global boundary
information that local exchanges do not control.  As a first application, it
gives a new proof of the binary Hall theorem, and it then yields a complete
solution of the two-hole problem with no multiplicity assumption.  We also
give direct constructive proofs for symmetric even-multiplicity two-hole and
four-hole families.  More broadly, the new technique provides a framework for
studying further subfamilies of the BGS problem by measuring how far their
matching structure departs from the Hall case.

}

\keywords{Prescribed differences, perfect matchings, functional batch codes, group circulants, Walsh--Hadamard transform, $2$-adic valuation}

\maketitle

\section{Introduction}
\label{sec:intro}

Let $V=\F_2^s$ and put $n=2^{s-1}$.  The prescribed-difference matching
problem asks whether a zero-sum list of $n$ nonzero vectors
$v_0,v_1,\ldots,v_{n-1}\in V$ can always be realized as the edge differences
of a perfect matching of $V$.  Equivalently, one asks for a partition
$V=\bigsqcup_{i=0}^{n-1}\{x_i,y_i\}$ such that $x_i+y_i=v_i$ for every
$i\in[n]$.  The condition $\sum_{i=0}^{n-1}v_i=0$ is necessary because the
sum of all vectors of $V$ is zero.  Balister, Gy\H{o}ri, and Schelp
conjectured that it is also sufficient~\cite{BalisterGyoriSchelp2011}.  We
refer to this statement as the \emph{Balister--Gy\H{o}ri--Schelp conjecture},
or the \emph{BGS conjecture}.

Prescribed-difference pairing problems belong to the broader family of
``seating couples'' questions.  Roughly speaking, the classical cyclic
version asks for analogous pairings of residues modulo an integer rather than
binary vectors.  For cyclic groups of odd prime order, Preissmann and
Mischler proved the corresponding pairing theorem
\cite{PreissmannMischler2009}; Karasev and Petrov developed related polynomial
methods for finite fields and vector spaces~\cite{KarasevPetrov2012}.  The
binary vector-space problem is distinguished by its global zero-sum
obstruction and by the requirement that the matching cover the entire group.

The problem also arises naturally from functional batch codes.  Batch codes
were introduced by Ishai, Kushilevitz, Ostrovsky, and Sahai
\cite{IshaiEtAl2004}.  In the binary functional setting, the nonzero columns
of the Simplex generator matrix are servers storing linear combinations of
$s$ information symbols, and a pair $\{x,y\}$ recovers the request $x+y$.
Thus a prescribed-difference perfect matching of $V$, after removal of a
formal zero server, gives $2^{s-1}$ pairwise disjoint recovery sets.  In
particular, the BGS conjecture implies the functional batch conjecture
$FB(s,2^{s-1})=2^s-1$ of Zhang, Etzion, and
Yaakobi~\cite{ZhangEtzionYaakobi2020}.  Our focus here is the matching
problem itself.

Several structured families are known.  The problem is solved when all
requests are equal and when, after elementary row operations, the request
matrix has an all-one row~\cite{YohananovYaakobi2022}.  The latter condition
is equivalent to saying that all prescribed differences lie outside one fixed
hyperplane.  Taking the parity hyperplane gives the family in which every
prescribed difference has odd Hamming weight.  In particular, the case in
which every prescribed difference is a unit vector is already contained in
this hyperplane family, since every unit vector has odd Hamming weight.  The
same all-crossing family also follows existentially from Hall's theorem
\cite{Hall1952,HollmannEtAl2023}.  The algorithm \emph{BSolution} of
\cite{YohananovYaakobi2022} gives a constructive solution of the same family.
Hall translates the right affine side back to $H$ and works with a permutation
of $H$, whereas \emph{BSolution} keeps the two affine sides explicit and moves
the unresolved redundancy between them through alternating-path exchanges.
This two-sided viewpoint is more convenient for the constructive arguments
later in the paper.  Kov\'acs
proved the conjecture when the number of distinct prescribed differences is at
most $s-2\log_2s-1$, and also, for every fixed $\varepsilon>0$ and all
sufficiently large $s$, when at least a fraction
$\frac12+\varepsilon$ of the prescribed differences are equal
\cite{Kovacs2024}.  A formal summary appears in
Theorem~\ref{thm:known-solvable-families}.

Despite this progress, the full BGS conjecture appears substantially more
difficult.  To expose the source of the difficulty, fix a hyperplane
$H\leq V$ and $\alpha\in V\setminus H$, so that $V=H\sqcup(H+\alpha)$.  A
prescribed difference outside $H$ joins the two affine sides, whereas a
difference in $H$ stays within one side.  We call the former \emph{crossing}
and the latter \emph{internal}.  In this language, a general BGS instance is
an arbitrary mixture of crossing and internal requests.  One must decide where
the internal pairs are placed, which vertices they consume on the two affine
sides, and simultaneously realize all remaining crossing requests between the
unused vertices.  These tasks are strongly coupled: changing one internal
pair changes the two punctured vertex sets on which the crossing matching must
be constructed.

The gap between the functional batch formulation and the prescribed-pair
problem is already visible in an apparently favorable case.  Suppose all
prescribed requests are pairwise distinct.  For the Simplex functional batch
code this causes no difficulty: every request vector is itself a server
column, so the requests can be recovered from distinct singleton servers.  In
the BGS problem, however, every request must be realized by a pair and all
vertices of $V$ must be used.  To our knowledge, even the pairwise-distinct
case remains open in general, apart from structured families such as the Hall
hyperplane case.  Removing all multiplicity phenomena therefore does not
remove the central difficulty of prescribed-difference matching.

The limitations of the existing approaches become especially visible in mixed
instances.  Kov\'acs' method is organized around the span of the distinct
prescribed differences and constructs the matching inside cosets of this
span~\cite{Kovacs2024}.  Its flexibility is therefore tied to the number and
sizes of the available cosets.  When many distinct requests are present, their
span may already be the whole ambient space, leaving no useful coset
structure; a small number of internal requests is not a small parameter for
this method.  The local exchange underlying \emph{BSolution} encounters a
different obstruction.  It preserves previously fixed crossing requests, but
does not control the free boundary left by the exchange.  Once prescribed
internal differences must survive to the end, this missing endpoint control
becomes decisive.  The precise mechanism is explained in
Section~\ref{sec:prelim} and formalized only when it is needed in
Section~\ref{sec:alternating-exchange}.

There is also an exact multivariate polynomial formulation.  After identifying
$V$ with $\F_{2^s}$, one considers
\[
\begin{split}
 f_v(x_1,\ldots,x_n)
 ={}&\prod_{i=1}^{n}v_i
 \prod_{1\leq i<j\leq n}
 (x_i+x_j)(x_i+x_j+v_i)\\
 &\hspace{34mm}\cdot
 (x_i+x_j+v_j)(x_i+x_j+v_i+v_j).
\end{split}
\]
Nonvanishing of $f_v$ at a field point is equivalent to a prescribed pair
decomposition, and reduction modulo
$\langle x_1^{2^s}+x_1,\ldots,x_n^{2^s}+x_n\rangle$ gives an exact
finite-function certificate; see~\cite{YohananovEssayag2025} and the
Combinatorial Nullstellensatz framework of Alon~\cite{Alon1999}.  The most
natural top-degree coefficient, however, disappears in characteristic two.
The highest-degree symmetric part contains
$\prod_{1\leq i<j\leq n}(x_i+x_j)^4$, and the balanced monomial has Dyson
coefficient $\binom{2^s}{2,2,\ldots,2}$, which is even and therefore vanishes
in characteristic two~\cite{Dyson1962}.  After reduction to the finite-function
ring, the remaining coefficients depend simultaneously on all request
variables.  This formulation gives useful equivalent and sufficient
conditions, but it does not presently provide a mechanism adapted to a small
number of internal requests.

The full mixed problem therefore seems beyond the reach of the currently
available tools.  We begin with the smallest possible departure from the
solved Hall hyperplane case.  When every request is crossing, every realizing
edge joins $H$ to $H+\alpha$ and, after translating the right endpoint by
$\alpha$, a perfect matching is exactly a permutation $\pi:H\to H$.  This is
the Hall picture.  The zero-sum condition forces the number of internal
requests to be even, so the first nontrivial mixed case contains exactly two
internal requests.  At first sight this may appear to be only a tiny
perturbation of Hall's theorem.  It is not: already with two internal requests the permutation model is
punctured, the constructive Hall exchange no longer controls the required
boundary, and the known coset methods do not detect the new parameter.

This motivates the basic problem studied in this paper.

\begin{definition}[Two-hole instance]
\label{def:intro-two-hole}
A request matrix is called a \emph{two-hole instance with respect to $H$} if
exactly two requests are internal and all remaining requests are crossing.
\end{definition}

In any realization of a two-hole instance, one internal edge lies in $H$ and
the other in $H+\alpha$.  Hence two vertices are removed from each affine side
before the crossing requests are matched.  After translating the right side
back to $H$, the complete Hall permutation is replaced by an incomplete
permutation, more precisely a prescribed-difference bijection
$H\setminus A\to H\setminus B$ with $|A|=|B|=2$.  Thus there are two holes in
each copy of the Hall space $H$; this is the source of the term
\emph{two-hole}.  The deleted sets $A$ and $B$ are not known in advance and
must be chosen so that the residual crossing instance is realizable.

Figure~\ref{fig:hall-two-hole-intro} summarizes the change from Hall to two
holes.

\begin{figure}[t]
\centering
\resizebox{0.60\textwidth}{!}{%
\begin{tikzpicture}[
    x=0.78cm,
    y=0.76cm,
    vertex/.style={circle,fill=black,inner sep=2pt},
    box/.style={rounded corners=7pt,thick},
    edge/.style={thick,line cap=round}
]
\draw[box] (0,0) rectangle (2.6,5.7);
\draw[box] (5.0,0) rectangle (7.6,5.7);

\foreach \name/\yy in {a1/5.0,a2/4.0,a3/3.0,a4/2.0,a5/1.0}
    \node[vertex] (\name) at (1.3,\yy) {};
\foreach \name/\yy in {b1/5.0,b2/4.0,b3/3.0,b4/2.0,b5/1.0}
    \node[vertex] (\name) at (6.3,\yy) {};

\draw[edge] (a1)--(b2);
\draw[edge] (a2)--(b5);
\draw[edge] (a3)--(b1);
\draw[edge] (a4)--(b4);
\draw[edge] (a5)--(b3);

\node[draw=none,fill=none] at (1.3,-0.45) {$H$};
\node[draw=none,fill=none] at (6.3,-0.45) {$H+\alpha$};
\node[draw=none,fill=none,font=\small] at (3.8,6.15)
{\textup{(a) Hall / \emph{BSolution}}};

\begin{scope}[xshift=9.2cm]
\draw[box] (0,0) rectangle (2.6,5.7);
\draw[box] (5.0,0) rectangle (7.6,5.7);

\foreach \name/\yy in {c1/5.05,c2/4.05,c3/3.05,c4/1.25,c5/0.72}
    \node[vertex] (\name) at (1.3,\yy) {};
\foreach \name/\yy in {d1/5.10,d2/4.10,d3/3.10,d4/1.85,d5/0.48}
    \node[vertex] (\name) at (6.3,\yy) {};

\draw[edge] (c1)--(d2);
\draw[edge] (c2)--(d3);
\draw[edge] (c3)--(d1);
\draw[edge] (c4)--(c5);
\draw[edge] (d4)--(d5);

\node[draw=none,fill=none] at (1.3,-0.45) {$H$};
\node[draw=none,fill=none] at (6.3,-0.45) {$H+\alpha$};
\node[draw=none,fill=none,font=\small] at (3.8,6.15)
{\textup{(b) Two holes}};
\end{scope}
\end{tikzpicture}%
}
\caption{The all-crossing Hall / \emph{BSolution} case and the first mixed
case.  Hall encodes the left panel as a permutation of $H$, whereas
\emph{BSolution} keeps the two affine sides explicit.  With two internal
requests, two vertices are deleted from each side and the Hall permutation is
replaced by a bijection between two copies of $H$ with two holes each.}
\label{fig:hall-two-hole-intro}
\end{figure}

The main idea of this paper is a character-minor method that sees this
punctured Hall structure directly.  The method first yields a new
character-theoretic proof of the binary Hall theorem.  Attach a variable
$z_x$ to each reduced difference $x\in H$ and form the group circulant
$C(z)=(z_{a+b})_{a,b\in H}$.  A coefficient of $\per C(z)$ counts Hall
realizations exactly, while the same coefficient of $\det C(z)$ is their
signed count.  For every zero-sum multiplicity profile $\mathbf m$, we prove
\[
    \vTwo\!\left([z^{\mathbf m}]\det C(z)\right)
    =
    \vTwo\!\left(\frac{n!}{\mathbf m!}\right),
\]
where $\vTwo(a)$ denotes the exponent of $2$ in the prime factorization of a
nonzero integer $a$.  Thus complete cancellation between even and odd Hall
realizations is impossible.  Walsh factorization converts the coefficient
into the permanent of a repeated-column character matrix, and pairing rows by
a character involution gives an exact recursion from order $n$ to order
$n/2$.

Our second and main contribution is the complete two-hole theorem.  The part
of Hall that survives is the crossing core: after the two internal edges are
reserved, the remaining requests still ask for a prescribed-difference
bijection between two affine copies of $H$.  What is lost is the full domain
and codomain.  The crossing core now lives on two unknown punctured copies of
$H$, so the full circulant must be replaced by a complementary minor.  We
combine Cauchy--Binet, Jacobi's complementary-minor identity, and exact
$2$-adic nonvanishing for two deleted Walsh factors.  An invertible Walsh
transform then forces a choice of the two internal-edge locations for which
the deleted circulant coefficient is nonzero.  This proves every zero-sum
two-hole instance with no multiplicity assumption.

The two-hole theorem is existential.  We also give constructive quotient
proofs for two symmetric even-multiplicity families: the two-hole case and the
four-hole patterns $p,p,p,p$ and $p,p,q,q$.  In each quotient, the requests to
be processed form an all-crossing Hall instance on the full quotient ambient
space, so \emph{BSolution} applies constructively.  The $p,p,q,q$ case requires
one additional idea: the coset-level procedure is stopped before its last
step, because that step would destroy a protected coset distance, and the
argument finishes with one final point-level \emph{BSolution} step and the global sum
condition.

\ifshowopenproblems
The hyperplane viewpoint suggests a broader hierarchy.  The following
questions move roughly from local perturbations of Hall to broader structural
regimes; they are not intended as a strict implication chain.
\begin{enumerate}[label=\textup{(P\arabic*)}]
    \item \label{prob:two-hole-intro}
    Is every zero-sum two-hole instance solvable?

    \item \label{prob:power-hole-intro}
    Do the cases with $h=2^r$ internal requests, $r\geq1$, admit a recursive
    or character-theoretic structure reflecting the binary geometry of $V$?

    \item \label{prob:h-hole-intro}
    More generally, for a fixed even integer $h$, what can be said when
    exactly $h$ prescribed differences are internal and all remaining requests
    are crossing?

    \item \label{prob:even-multiplicity-intro}
    What additional conclusions are possible under multiplicity restrictions,
    for example when every prescribed difference has even multiplicity or when
    every multiplicity is divisible by $4$?

    \item \label{prob:fixed-rank-intro}
    For a fixed integer $r$ with $1<r<s$, is every zero-sum request matrix
    $M$ with $\operatorname{rank}(M)=r$ solvable?  Equivalently, can the BGS
    problem be organized by a fixed intermediate dimension of the row space?
    The case $r=1$ is the all-equal family, whereas $r=s$ is unrestricted.

    \item \label{prob:beyond-kovacs-intro}
    How far can the few-difference threshold of Kov\'acs be extended?  In
    particular, what can be said when the number $t$ of distinct prescribed
    differences satisfies $t>s-2\log_2s-1$, for example when
    $t=s-2\log_2s+c$ for a fixed constant $c$?

    \item \label{prob:distinct-intro}
    Is every zero-sum instance with pairwise distinct prescribed differences
    solvable?

    \item \label{prob:all-internal-intro}
    Is every zero-sum instance solvable when all prescribed differences lie in
    one fixed hyperplane?  Equivalently, after a change of basis, what can be
    said when the request matrix has an all-zero row?
\end{enumerate}

The first four questions follow the punctured-Hall viewpoint directly.  The
rank and few-difference questions ask whether other low-complexity parameters
can be pushed beyond the currently known families, while the pairwise-distinct
and all-internal cases represent broader unresolved regimes.

The principal result of this paper answers Problem~\ref{prob:two-hole-intro}
affirmatively.  To our knowledge, the two-hole problem has not previously been
isolated as a separate prescribed-difference matching problem.  More
importantly, its solution produces a mechanism that is absent from the known
Hall, coset, and \emph{BSolution} approaches: complementary character minors retain
the exact boundary data created by internal requests.  This provides a serious
new route for studying the higher punctured Hall problems above.
\else
The present paper deliberately focuses on the first mixed case beyond Hall.
Its aim is to isolate the structural change created by two internal requests,
to identify the exact boundary information lost by the complete Hall
permutation, and to develop a method that restores this information.  The main
result is therefore stated and proved at the two-hole level, without imposing
a multiplicity assumption.  To our knowledge, the two-hole problem has not
previously been isolated as a separate prescribed-difference matching problem.

More importantly, the proof produces a mechanism absent from the known Hall,
coset, and \emph{BSolution} approaches: complementary character minors retain
the exact boundary data created by the two internal requests.  This is the
structural reason the method can pass from the complete Hall permutation to
the first punctured case.
\fi

The paper is organized as follows.  Section~\ref{sec:prelim} fixes notation
and formalizes the hyperplane viewpoint.  Section~\ref{sec:hall} develops the
group-circulant and Walsh-permanent method and proves exact Hall
noncancellation.  Section~\ref{sec:two-hole} passes to complementary minors and
proves the two-hole theorem.  Section~\ref{sec:constructive-complements}
returns to the alternating-path exchange behind \emph{BSolution}, explains its exact
endpoint limitation beyond Hall, and gives the symmetric multiplicity-based
constructions.  \ifshowopenproblems
We conclude with further punctured Hall problems suggested by the
character-minor framework.
\else
We conclude by summarizing the two-hole viewpoint and the role of the
character-minor method.
\fi

\section{Preliminaries and the Hyperplane Viewpoint}
\label{sec:prelim}

Throughout the paper all vectors and matrices are over $\F_2$.  For a positive
integer $n$, write $[n]=\{0,1,\ldots,n-1\}$, and for a binary vector $x$ let
$\wt(x)$ denote its Hamming weight.  Fix $s\geq2$ and put $n=2^{s-1}$.  A
\emph{request} is a nonzero vector of $\F_2^s$, and a request instance is an
$s\times n$ matrix $M=[v_0,v_1,\ldots,v_{n-1}]$ whose columns need not be
distinct.

Let $\cH=[h_0,h_1,\ldots,h_{2n-1}]$ be an $s\times2^s$ Hadamard generator
matrix, so its columns are precisely the vectors of $\F_2^s$, each appearing
once.  We regard these columns as servers.  The zero column is retained for the
prescribed-difference formulation and removed when passing to functional batch
codes.

\begin{definition}[Hadamard pair solution]
\label{def:hadamard-solution}
The request matrix $M$ has a \emph{Hadamard pair solution} if the columns of
$\cH$ can be ordered so that $v_i=h_{2i}+h_{2i+1}$ for every $i\in[n]$.
Equivalently, $\F_2^s$ can be partitioned into $n$ pairs whose prescribed
differences are the columns of $M$.
\end{definition}

A Hadamard pair solution necessarily satisfies
$\sum_{i=0}^{n-1}v_i=\sum_{x\in\F_2^s}x=0$.  The BGS conjecture asserts that
this condition is sufficient.

\begin{conjecture}[Balister--Gy\H{o}ri--Schelp
\cite{BalisterGyoriSchelp2011}]
\label{conj:bgs}
Let $M=[v_0,\ldots,v_{2^{s-1}-1}]$ have nonzero columns and satisfy
$\sum_{i=0}^{2^{s-1}-1}v_i=0$.  Then $M$ has a Hadamard pair solution.
\end{conjecture}

An $FB\bs(N,s,k)$ functional batch code consists of $N$ binary servers storing
linear combinations of $s$ information symbols such that every multiset of
$k$ linear requests has $k$ pairwise disjoint recovery sets.  We write
$FB(s,k)$ for the minimum possible number of servers.  The conjecture of
\cite{ZhangEtzionYaakobi2020} states that $FB(s,2^{s-1})=2^s-1$.
Conjecture~\ref{conj:bgs} implies this functional batch conjecture through the
virtual-zero reduction recalled in the introduction.

We shall repeatedly change coordinates, so we record the following invariance.

\begin{lemma}[Linear invariance]
\label{lem:linear-invariance}
Let $T\in\operatorname{GL}(s,2)$.  The request matrix
$M=[v_0,\ldots,v_{n-1}]$ has a Hadamard pair solution if and only if
$TM=[Tv_0,\ldots,Tv_{n-1}]$ has a Hadamard pair solution.
\end{lemma}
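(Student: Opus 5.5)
The plan is to transport a solution along the linear bijection $x\mapsto Tx$ of $V=\F_2^s$. Suppose $M$ has a Hadamard pair solution. By Definition~\ref{def:hadamard-solution}, there is an ordering of the columns of $\cH$ with $v_i=h_{2i}+h_{2i+1}$ for every $i\in[n]$. Equivalently, there is a partition $V=\bigsqcup_{i\in[n]}\{x_i,y_i\}$ with $x_i+y_i=v_i$.

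We apply $T$ to every pair and set $x_i'=Tx_i$ and $y_i'=Ty_i$. Since $T\in\operatorname{GL}(s,2)$ is a bijection of $V$, the sets $\{x_i',y_i'\}$ are pairwise disjoint. They still have size two, and together they cover $V$. So they again form a partition of $V$ into $n$ pairs.

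Linearity over $\F_2$ gives $x_i'+y_i'=T(x_i+y_i)=Tv_i$. This is exactly a Hadamard pair solution for $TM$: order the columns of $\cH$ by placing $Tx_i,Ty_i$ in positions $2i,2i+1$. Here we use that the columns of $\cH$ are all vectors of $V$, each appearing once, so any partition into ordered pairs yields an admissible ordering.

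For the converse, we apply the same argument to $T^{-1}\in\operatorname{GL}(s,2)$ and the matrix $TM$, since $T^{-1}(TM)=M$. We may also note that $TM$ has nonzero columns and zero column sum exactly when $M$ does, so the hypotheses of Conjecture~\ref{conj:bgs} are preserved as well.

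There is no real obstacle. The only point needing care is the bookkeeping that a relabelled partition of $V$ corresponds to a reordering of the columns of $\cH$. This holds because $\cH$ lists each vector of $V$ exactly once.
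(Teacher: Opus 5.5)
Your proof is correct and matches the paper's argument: since $T$ is a bijection, $T\cH$ is just a reordering of the columns of $\cH$, linearity gives $Tv_i=Th_{2i}+Th_{2i+1}$, and the converse follows by applying $T^{-1}$.
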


\begin{proof}
Suppose $v_i=h_{2i}+h_{2i+1}$ for every $i\in[n]$.  Since $T$ is invertible,
$\{Tx:x\in\F_2^s\}=\F_2^s$, so $T\cH$ is only a reordering of a Hadamard
generator matrix and $Tv_i=Th_{2i}+Th_{2i+1}$.  The converse follows by
applying $T^{-1}$.
\end{proof}

We now formalize the hyperplane picture used throughout the paper.  Fix a
hyperplane $H\leq\F_2^s$ and choose $\alpha\notin H$, so that
$\F_2^s=H\sqcup(H+\alpha)$.  A request $v\notin H$ is \emph{crossing with
respect to $H$}, while a request $v\in H$ is \emph{internal with respect to
$H$}.  Every edge realizing a crossing difference joins the two affine sides;
every edge realizing an internal difference stays inside one side.

Following Definition~\ref{def:intro-two-hole}, we call an internal request a
\emph{hole with respect to $H$} and count holes with multiplicity.  More
generally, an \emph{$h$-hole instance with respect to $H$} has exactly $h$
internal requests and all remaining requests are crossing.  The word
\emph{hole} refers to the Hall matching, not to a missing request: an internal
edge uses two vertices on one affine side, so those vertices are removed from
the crossing matching between the two copies of $H$.

There is a useful numerical consequence of this picture.  In any solution of
an $h$-hole instance, let $a$ internal edges lie in $H$ and $b$ lie in
$H+\alpha$.  Since the $n-h$ crossing edges use exactly $n-h$ vertices on each
affine side, precisely $h$ vertices remain on each side for internal edges.
Hence $2a=2b=h$.  Thus $h$ is even, exactly $h/2$ internal edges lie on each
side, and after the internal edges are removed the residual problem is a
prescribed-difference bijection between two copies of $H$ with exactly $h$
vertices deleted from each.  For $h=0$ this is the complete Hall permutation.
For $h=2$ it is an incomplete Hall permutation
$H\setminus A\to H\setminus B$ with two missing vertices in each copy of
$H$---the two holes.

The parity hyperplane
$H_{\rm even}=\{x\in\F_2^s:\wt(x)\equiv0\pmod2\}$ gives the most concrete
example.  Internal requests are exactly the even-Hamming-weight requests and
crossing requests are the odd-weight requests.  Consequently, a two-hole
instance with respect to $H_{\rm even}$ is precisely an instance with two
even-weight requests and $n-2$ odd-weight requests.

The relation with \emph{BSolution} is worth making explicit already at the
geometric level.  The algorithm acts on the full space
$V=H\sqcup(H+\alpha)$, not on $H$ alone, and in the all-crossing case it is a
constructive counterpart of Hall.  A \emph{BSolution} step fixes one crossing
request while transferring the unresolved redundancy through the two affine
sides and preserving all previously fixed crossing requests.  What it does not
control is the boundary left by the transfer: neither the terminal pair on the
opposite side nor the vertex released on the starting side is prescribed.

This becomes decisive in the two-hole problem.  \emph{BSolution} can realize
crossing requests one after another until only three requests remain
unresolved.  At that moment exactly three free vertices remain in $H$ and
three in $H+\alpha$; the unresolved requests are the last crossing request and
the two prescribed internal requests $p$ and $q$.  Suppose two free vertices
on one side have been reserved at difference $p$.  Only one other free vertex
remains on that side, so the next \emph{BSolution} step must use at least one
vertex of the reserved pair.  The exchange replaces it in the free boundary by
an uncontrolled released vertex, and the preserved difference $p$ is lost.
The new free pair may accidentally have difference $p$ or $q$, but no invariant
forces either outcome.  Thus the constructive Hall mechanism gets almost to
the end and fails precisely because it does not control the final boundary.
This is the obstruction that motivates the deleted-minor method.  The graph
$G_x(\cH)$ and the formal alternating-path switch are postponed until
Section~\ref{sec:alternating-exchange}, immediately before their constructive
use.

We finish the preliminaries by recording the main previously known solvable
families.

\begin{theorem}[Known solvable families]
\label{thm:known-solvable-families}
Let $M=[v_0,\ldots,v_{n-1}]$ be a zero-sum request matrix with nonzero
columns.  A Hadamard pair solution is known to exist in each of the following
cases.
\begin{enumerate}[label=\textup{(\roman*)}]
    \item The ambient dimension satisfies $s\leq5$
    \cite{BalisterGyoriSchelp2011}.

    \item All requests are equal~\cite{YohananovYaakobi2022}.

    \item There exists a nonzero linear functional
    $\ell:\F_2^s\to\F_2$ such that $\ell(v_i)=1$ for every $i$.
    Equivalently, after elementary row operations the request matrix has an
    all-one row, or all requests lie outside one fixed hyperplane.  This family
    has a constructive solution by \emph{BSolution}
    \cite{YohananovYaakobi2022} and an existential Hall-theorem proof
    \cite{Hall1952,HollmannEtAl2023}.  Taking $\ell$ to be the parity
    functional gives every all-odd-weight instance, and therefore every
    unit-vector instance.

    \item After reordering the columns, there is a nonzero $u$ such that
    $v_0=\cdots=v_{n/2-1}=u$ and the remaining $n/2$ requests can be
    partitioned into equal-valued pairs.  This is the
    Balister--Gy\H{o}ri--Schelp majority family
    \cite{BalisterGyoriSchelp2011}.

    \item The number $t$ of distinct prescribed differences satisfies
    $t\leq s-2\log_2s-1$~\cite{Kovacs2024}.

    \item For every fixed $\varepsilon>0$ and all sufficiently large $s$, at
    least a fraction $\frac12+\varepsilon$ of the prescribed differences are
    equal~\cite{Kovacs2024}.
\end{enumerate}
\end{theorem}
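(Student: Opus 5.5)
This statement is a compilation of results from the literature, so the proof will mostly consist of matching each item to its source and checking that the formulations used here agree with those in the cited papers. Items (i), (ii), (iv), (v) and (vi) will be quoted directly. Item (i) is the computer-assisted verification for $s\leq5$ in \cite{BalisterGyoriSchelp2011}. Item (ii) is the all-equal case of \cite{YohananovYaakobi2022}. Item (iv) is the majority family of \cite{BalisterGyoriSchelp2011}. Items (v) and (vi) are the two theorems of \cite{Kovacs2024}. For each one I would only confirm that the cited hypotheses match the zero-sum, nonzero-column setting of Conjecture~\ref{conj:bgs} and Definition~\ref{def:hadamard-solution}. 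The multiplicity and threshold conditions transfer verbatim.

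The only item with genuine content to verify is (iii), in particular the claimed equivalences.

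\textbf{Equivalence of the three formulations.} A nonzero functional $\ell$ has kernel a hyperplane $H$, and $\ell(v_i)=1$ says exactly that every $v_i$ lies outside $H$. Extending $\ell$ to a basis of the dual space gives some $T\in\operatorname{GL}(s,2)$ whose first coordinate is $\ell$. Hence $TM$ has an all-one first row, and by Lemma~\ref{lem:linear-invariance} solvability is unaffected. Taking $\ell(x)=\wt(x)\bmod 2$ recovers the all-odd-weight case, and unit vectors have weight $1$.

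\textbf{Existence in the all-crossing case.} I would include a short sketch of the Hall argument rather than only citing it. Fix $\alpha\notin H$ and write $v_i=\alpha+d_i$ with $d_i\in H$. A solution is then the same as a bijection $\pi:H\to H$ such that the differences $x+\pi(x)$ realize the multiset $\{d_i\}$. Summing the $v_i$ over the $n$ terms gives $n\alpha+\sum_i d_i=0$. Since $n=2^{s-1}$ is even for $s\geq2$, this yields $\sum_i d_i=0$ in $H\cong\F_2^{s-1}$.

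Hall's theorem \cite{Hall1952} states that any zero-sum sequence of length $|G|$ in an abelian group $G$ is the difference sequence of a permutation of $G$; I would apply it with $G=H$. Each difference $d_i$ then gives the edge $\{x,\pi(x)+\alpha\}$ with difference $d_i+\alpha=v_i$, and these edges form the required perfect matching. The constructive version via \emph{BSolution} is quoted from \cite{YohananovYaakobi2022}.

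The main point requiring care is not mathematical difficulty but bookkeeping: making sure that each cited result is stated for zero-sum lists of exactly $2^{s-1}$ nonzero vectors in $\F_2^s$, and not for a variant such as lists of distinct elements or a cyclic group. The most delicate case is the asymptotic statement (vi). There I would check that "$\frac12+\varepsilon$ of the differences are equal" is meant with the same normalization by $n$ used here.
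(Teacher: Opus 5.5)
Your proposal matches the paper's treatment. The paper states this theorem purely as a compilation of cited results and gives no proof of its own. Your only substantive verification, for item~(iii), has two parts: the equivalence of the three formulations via an invertible $T$ and Lemma~\ref{lem:linear-invariance}, and the reduction $d_i=v_i+\alpha$ with $\sum_i d_i=0$ because $n$ is even. That reduction is exactly the one the paper carries out later in Section~\ref{sec:hall} and Corollary~\ref{cor:hyperplane-matching}, where it invokes its own Theorem~\ref{thm:binary-hall} in place of Hall's theorem.
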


Theorem~\ref{thm:known-solvable-families} highlights the hyperplane gap.  An
all-one row is exactly the Hall case $h=0$, whereas an all-zero row places all
requests inside one hyperplane and remains open in general.  The main theorem
of this paper settles the first intermediate value $h=2$.

\section{Hall Noncancellation and Character Permanents}
\label{sec:hall}

Fix the hyperplane $H$ and $\alpha\notin H$ from
Section~\ref{sec:prelim}.  Put $r=s-1$ and $n=|H|=2^r$, and identify $H$
with $\F_2^r$ after choosing a basis.

In the Hall case all requests lie in $H+\alpha$.  Write the original
request matrix as $M=[v_0,v_1,\ldots,v_{n-1}]$ and each request uniquely as
$v_i=\alpha+d_i$ with $d_i\in H$, equivalently $d_i=v_i+\alpha$.  Under the
affine identification $\alpha+x\mapsto x$ from $H+\alpha$ to $H$, the request
$v_i$ is represented by the reduced difference $d_i$.  We collect these values
in $\cD=[d_0,d_1,\ldots,d_{n-1}]$.  Zero is allowed among the $d_i$.

A realization of $\cD$ is a permutation $\pi:H\to H$ for which the
multiset $\{a+\pi(a):a\in H\}$ is precisely the multiset of columns of
$\cD$.  This is exactly the perfect cross-hyperplane matching problem for
$M$: the pair $\{a,\alpha+\pi(a)\}$ has difference
$\alpha+(a+\pi(a))$, so $a+\pi(a)=d_i$ realizes the original request
$\alpha+d_i=v_i$.  Conversely, every perfect matching between $H$ and
$H+\alpha$ determines such a permutation after translating the second
endpoint by $\alpha$.

The following existence statement is already known.  It is the elementary
abelian $2$-group case of Hall's classical theorem~\cite{Hall1952}.  In the
prescribed-difference setting, the same all-crossing case also follows from the
constructive \emph{BSolution} method of \cite{YohananovYaakobi2022}; see also
\cite{HollmannEtAl2023}.  We use Hall's reduced formulation here because a
permutation of $H$ is the natural input for the group circulant.  For the later
constructive arguments, \emph{BSolution} has a different advantage: it keeps
both affine sides visible and exposes the local motion of the redundancy.

Our purpose here is different.  We give another proof based on coefficients of
matrix polynomials.  The resulting viewpoint contains more enumerative and
arithmetic information than existence alone: one coefficient counts the exact
number of realizations of a prescribed difference multiset, while another gives
their signed count.  We shall prove that, for every zero-sum prescribed-
difference matrix, this signed count has an exact $2$-adic valuation and in
particular cannot vanish.  Thus the Binary Hall theorem will follow again from
a stronger noncancellation statement.  The same coefficient framework will
also admit a minor version in the next section.

\subsection{The group circulant and the signed realization count}
\label{subsec:group-circulant}

\begin{theorem}[Binary Hall theorem]
\label{thm:binary-hall}
Let
\[
    \cD=[d_0,d_1,\ldots,d_{n-1}],
    \qquad
    d_i\in H,
\]
and suppose
\[
    \sum_{i=0}^{n-1}d_i=0.
\]
Then $\cD$ has a realization.
\end{theorem}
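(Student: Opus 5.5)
The plan is to prove the theorem through the stronger noncancellation statement announced in the introduction. Let the multiplicity profile $\mathbf m=(m_x)_{x\in H}$ record how often each $x\in H$ occurs among the columns of $\cD$. Form the group circulant $C(z)=(z_{a+b})_{a,b\in H}$ in the variables $z_x$, $x\in H$. Expanding the determinant over permutations $\pi$ of $H$ gives
\[
[z^{\mathbf m}]\det C(z)=\sum_{\pi}\sgn(\pi),
\]
where the sum runs over the realizations $\pi$ of $\cD$. It therefore suffices to show that this coefficient is nonzero. I aim for the exact statement $\vTwo([z^{\mathbf m}]\det C(z))=\vTwo(n!/\mathbf m!)$, which is the form needed later for minors.

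The first step is Walsh diagonalization. The characters $\chi$ of $H$ diagonalize $C(z)$, so
\[
\det C(z)=\prod_{\chi}\Bigl(\sum_{x\in H}\chi(x)z_x\Bigr).
\]
The coefficient of $z^{\mathbf m}$ in this product of $n$ linear forms equals $\per X/\mathbf m!$. Here $X$ is the $n\times n$ matrix with rows indexed by characters and columns by the requests $i\in[n]$, with entries $X_{\chi,i}=\chi(d_i)$; a value $x$ occurring $m_x$ times gives $m_x$ equal columns. The target thus becomes $\vTwo(\per X)=\vTwo(n!)=n-1$.

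The second step is a halving recursion on $\per X$. Fix a nonzero $u\in H$ and pair each character $\chi$ with $\chi\psi$, where $\psi$ is a character with $\psi(u)=-1$. Expand the permanent along each row pair. A pair of columns $\{i,j\}$ assigned to the rows $\chi,\chi\psi$ contributes
\[
\chi(d_i)\chi(d_j)\bigl(\psi(d_i)+\psi(d_j)\bigr),
\]
which equals $\pm2\chi(d_i+d_j)$ when $\psi(d_i)=\psi(d_j)$ and vanishes otherwise. Summing over the $n/2$ row pairs produces a factor $2^{n/2}$ and leaves a sum over perfect matchings of the columns into $\psi$-compatible pairs. For each such matching, what remains is a permanent of the same repeated-column character type on the character group of the quotient $H/\langle u\rangle$, which has order $n/2$, with merged requests $d_i+d_j$. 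The zero-sum hypothesis $\sum d_i=0$ gives $\prod_i\chi(d_i)=1$ for every $\chi$, so the sign factors align across the expansion, and the merged instances are again zero-sum. Induction on $r$ then applies to each term, with base case $r=0$ trivial.

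The main obstacle is the $2$-adic bookkeeping in the recursion. One needs the count of admissible column pairings, weighted by the order-$n/2$ permanents, to have valuation exactly $\vTwo(n!)-n/2=\vTwo((n/2)!)+(n/2-1)$, rather than merely being a sum of terms each divisible by that power, since such a sum could cancel to a higher power. I would try to organize the pairings by the induced multiplicity profile on $H/\langle u\rangle$ and show that exactly one class of pairings, or an odd number of classes, attains the minimal valuation. To do so I would compare with the multinomial $n!/\mathbf m!$ via Legendre/Kummer carry counting, and choose $u$ adapted to $\mathbf m$ so that the carries match. If this exact matching of valuations fails for some profiles, the fallback is to prove only nonvanishing modulo $2^{\vTwo(n!/\mathbf m!)+1}$ by reducing the permanent modulo $2$ after extracting the forced power of $2$. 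Either route yields $[z^{\mathbf m}]\det C(z)\neq0$, hence a realization of $\cD$.
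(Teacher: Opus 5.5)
You reproduce the paper's architecture faithfully: the Walsh factorization of $\det C(z)$, the identity $\mathbf m!\,[z^{\mathbf m}]\det C(z)=\per X$, and the row-pair expansion $\per X=\pm2^{n/2}\sum_{\cP}\per X_{\cP}$ over $\psi$-compatible labeled column pairings $\cP$, with zero-sum merged lists. The gap is the step you yourself call the main obstacle. You never show that $\sum_{\cP}\per X_{\cP}$ fails to cancel, so you have neither the valuation nor even $[z^{\mathbf m}]\det C(z)\neq0$, and the theorem is not yet proved. The routes you sketch (grouping pairings by merged profile, Kummer carry counting, choosing $u$ adapted to $\mathbf m$) remain intentions. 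The fallback does not escape the difficulty: once each summand is divisible by $2^{n/2-1}$, showing the sum is nonzero modulo $2^{n/2}$ is exactly the exact-valuation claim. Your target is also misstated. What is needed is $\vTwo\bigl(\sum_{\cP}\per X_{\cP}\bigr)=\vTwo(n!)-n/2=n/2-1=\vTwo((n/2)!)$, not $\vTwo((n/2)!)+(n/2-1)$.

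The missing idea is a short parity count. It needs no profile bookkeeping, because the multiplicities were already absorbed into $\mathbf m!$ when you passed to the labeled permanent. Induct on the uniform statement that $\vTwo(\per X)=|H|-1$ for \emph{every} zero-sum labeled list of length $|H|$; this is the target you already wrote down. Then each summand is $\per X_{\cP}=2^{n/2-1}u_{\cP}$ with $u_{\cP}$ odd, whatever $\cP$ is. The sign in front is the same $(-1)^{n_1/2}$ for all $\cP$, where $n_0,n_1$ count the labeled columns on the two sides of the $\psi$-cut. Zero-sum forces $n_1$ even, hence $n_0=n-n_1$ is even, so the number of compatible labeled pairings, $(n_0-1)!!\,(n_1-1)!!$, is odd. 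A sum of an odd number of odd integers is odd, so $\vTwo(\per X)=n/2+(n/2-1)=n-1$. Dividing by $\mathbf m!$ gives $\vTwo([z^{\mathbf m}]\det C(z))=\vTwo(n!/\mathbf m!)$, and any nonzero $\psi$ works. Your ``odd number of classes'' criterion does hold, but only for this reason: a merged-profile class attains the minimal valuation exactly when it contains an odd number of labeled pairings, and the total number of pairings is odd. With this count inserted, your argument becomes the paper's proof.
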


We first explain why matrix polynomials are relevant to the realization
problem.  Attach a variable $z_x$ to each possible prescribed difference
$x\in H$ and define the group circulant
\[
    C(z)=\bigl(z_{a+b}\bigr)_{a,b\in H}.
\]
The rows and columns of $C(z)$ are indexed by possible endpoints
$a,b\in H$, and the $(a,b)$ entry is labeled by the difference $a+b\in H$.
Thus $a$ and $b$ are endpoints, whereas the column variable records a possible
prescribed difference.  Fix an ordering of $H$ and let $S_n$ be the symmetric
group on $[n]$.  Through this ordering, each $\pi\in S_n$ is also a bijection
$\pi:H\to H$, and $\sgn(\pi)\in\{+1,-1\}$ denotes its sign.

Choosing one entry from every row and every column of $C(z)$ is exactly the
same as choosing a permutation $\pi\in S_n$.  The corresponding product is
\[
    \prod_{a\in H}z_{a+\pi(a)}.
\]
Its variables record precisely the multiset of prescribed differences realized
by $\pi$.  Notice that the values $a+\pi(a)$ need not be distinct: the
permutation condition prevents collisions between endpoints, not repetitions
among the realized differences.

Consequently,
\[
    \per C(z)
    =
    \sum_{\pi\in S_n}
    \prod_{a\in H}z_{a+\pi(a)},
\]
whereas
\[
    \det C(z)
    =
    \sum_{\pi\in S_n}
    \sgn(\pi)
    \prod_{a\in H}z_{a+\pi(a)}.
\]

We now encode the fixed prescribed-difference matrix $\cD$.  For each
$x\in H$, let
\[
    m_x=|\{i\in[n]:d_i=x\}|,
    \qquad
    \mathbf m=(m_x)_{x\in H}.
\]
Thus $m_x$ is exactly the number of occurrences of the prescribed difference
$x$ among the columns of $\cD$.  We write
\[
    |\mathbf m|=\sum_{x\in H}m_x,
    \qquad
    \sigma(\mathbf m)=\sum_{x\in H}m_xx\in H,
\]
and
\[
    z^{\mathbf m}=\prod_{x\in H}z_x^{m_x},
    \qquad
    \mathbf m!=\prod_{x\in H}m_x!.
\]
Since $\cD$ has $n$ columns, $|\mathbf m|=n$ and
$\sigma(\mathbf m)=\sum_{i=0}^{n-1}d_i$.  The monomial $z^{\mathbf m}$
therefore records exactly the multiplicities of the prescribed differences in
$\cD$.

The permanent and determinant coefficients now have direct interpretations.
The coefficient $[z^{\mathbf m}]\per C(z)$ is exactly the number of
realizations of $\cD$: every permutation with the prescribed difference
multiset contributes $1$, and no other permutation contributes.  By contrast,
$[z^{\mathbf m}]\det C(z)$ is the signed number of realizations:
\[
    [z^{\mathbf m}]\det C(z)
    =
    \#\{\text{even realizations of }\cD\}
    -
    \#\{\text{odd realizations of }\cD\}.
\]
There is an immediate support restriction on the permanent coefficient.  For
any permutation $\pi:H\to H$,
\[
    \sum_{a\in H}(a+\pi(a))
    =
    \sum_{a\in H}a+\sum_{b\in H}b
    =0.
\]
Hence
\[
    \sigma(\mathbf m)\neq0
    \quad\Longrightarrow\quad
    [z^{\mathbf m}]\per C(z)=0.
\]
Combinatorially, this is simply the necessary zero-sum condition: a perfect
matching uses every server exactly once, so the sum of its realized requests is
the sum of all server vectors.  Later we shall recover the corresponding
vanishing on the determinant side from the character structure itself.

\begin{example}[From crossing requests to a Hall coefficient]
\label{ex:running-hall}
Let
\[
    V=\F_2^3,
    \qquad
    H=\{0\}\times\F_2^2,
    \qquad
    \alpha=100.
\]
Consider the all-crossing request list
\[
    M=[101,101,110,110].
\]
The requests are zero-sum.  Translating the right affine side
$H+\alpha$ back to $H$ replaces each crossing request $v_i$ by the reduced
difference
\[
    d_i=v_i+\alpha.
\]
After identifying $H$ with its last two coordinates, we obtain
\[
    \cD=[01,01,10,10].
\]

There are exactly two perfect crossing matchings of $V$ with this
prescribed-difference profile:
\[
\begin{aligned}
\cM_1={}&
\bigl\{
\{000,101\},
\{001,111\},
\{010,100\},
\{011,110\}
\bigr\},\\
\cM_2={}&
\bigl\{
\{000,110\},
\{001,100\},
\{010,111\},
\{011,101\}
\bigr\}.
\end{aligned}
\]
Indeed, the edge differences of either matching are two copies of $101$ and
two copies of $110$.

After translating the right endpoints by $\alpha$, the two matchings become
the two permutations of $H$
\[
    (01,11,00,10)
    \qquad\text{and}\qquad
    (10,00,11,01)
\]
in one-line notation with respect to the ordering $00,01,10,11$.
The group circulant is
\[
C(z)=
\begin{pmatrix}
 z_{00}&z_{01}&z_{10}&z_{11}\\
 z_{01}&z_{00}&z_{11}&z_{10}\\
 z_{10}&z_{11}&z_{00}&z_{01}\\
 z_{11}&z_{10}&z_{01}&z_{00}
\end{pmatrix}.
\]
Each of the two permutations contributes the monomial
\[
    z_{01}^2z_{10}^2
\]
to $\per C(z)$, and no other permutation contributes this monomial.  Hence
\[
    [z_{01}^2z_{10}^2]\per C(z)=2,
\]
exactly the number of Hall realizations of the original request list $M$.
Both permutations are odd, so
\[
    [z_{01}^2z_{10}^2]\det C(z)=-2.
\]
Thus the permanent coefficient literally counts the crossing matchings of the
original Hall instance, while the determinant coefficient counts the same
matchings with signs.  We shall return to this profile several times to
illustrate the character compression.
\end{example}

Under the zero-sum assumption, Hall's theorem and \emph{BSolution} already
imply $[z^{\mathbf m}]\per C(z)>0$.  They do not, however, rule out complete
cancellation between even and odd realizations in the determinant coefficient.
The determinant method below proves the stronger fact that such complete
cancellation cannot occur.

\begin{lemma}[Circulant coefficient certificate]
\label{lem:circulant-certificate}
Let $|\mathbf m|=n$.  If
\[
    [z^{\mathbf m}]\det C(z)\neq0,
\]
then the corresponding prescribed-difference matrix $\cD$ has a realization.
\end{lemma}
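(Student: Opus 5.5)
The argument is a direct unwinding of the Leibniz expansion of $\det C(z)$ recorded above; the lemma is essentially its contrapositive. Expanding
\[
    \det C(z)=\sum_{\pi\in S_n}\sgn(\pi)\prod_{a\in H}z_{a+\pi(a)}
\]
and grouping terms by monomial, each permutation $\pi$ contributes $\pm1$ to exactly one monomial. That monomial is $z^{\mathbf m(\pi)}$, where $m(\pi)_x=|\{a\in H:a+\pi(a)=x\}|$. Hence
\[
    [z^{\mathbf m}]\det C(z)=\sum_{\pi:\ \mathbf m(\pi)=\mathbf m}\sgn(\pi).
\]
The first step is to justify this grouping. The entries of $C(z)$ are single variables with coefficient $1$, so each product $\prod_a z_{a+\pi(a)}$ is a monomial with coefficient $1$. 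Its exponent vector is exactly $\mathbf m(\pi)$, because the variable $z_x$ appears once for each row $a$ whose chosen entry is $a+\pi(a)=x$.

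The second step is the contrapositive. If no permutation satisfies $\mathbf m(\pi)=\mathbf m$, then the sum above is empty, so the coefficient is $0$. Therefore a nonzero coefficient forces at least one $\pi\in S_n$, viewed as a bijection $H\to H$ through the fixed ordering, with $\mathbf m(\pi)=\mathbf m$.

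The third step checks that such a $\pi$ is a realization of $\cD$. The equality $\mathbf m(\pi)=\mathbf m$ says that, for every $x\in H$, the number of $a$ with $a+\pi(a)=x$ equals the number of columns of $\cD$ equal to $x$. This is precisely the statement that the multiset $\{a+\pi(a):a\in H\}$ equals the multiset of columns of $\cD$. The hypothesis $|\mathbf m|=n$ ensures that $\mathbf m$ is the multiplicity profile of an $n$-column matrix $\cD$, matching the $n$ rows of $C(z)$. One can then also translate back, as in the discussion preceding Theorem~\ref{thm:binary-hall}: the pairs $\{a,\alpha+\pi(a)\}$ give the crossing perfect matching of $V$ with differences $v_i=\alpha+d_i$.

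There is no genuine obstacle in this argument. The only points needing care are that distinct permutations may produce the same monomial, which is harmless because we only need the set of contributors to be nonempty, and that repeated differences are handled by multiplicities rather than by distinctness. The real difficulty, proving that $[z^{\mathbf m}]\det C(z)\neq0$ for every zero-sum $\mathbf m$, is deferred to the Walsh-permanent computation and is not part of this lemma.
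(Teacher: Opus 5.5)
Your proposal is correct and takes the same approach as the paper's proof: expand $\det C(z)$ by the Leibniz formula, identify $[z^{\mathbf m}]\det C(z)$ as the signed sum over permutations whose realized difference profile is $\mathbf m$, and conclude that a nonzero sum forces at least one such permutation, which is a realization of $\cD$. Your extra remarks (each permutation contributes a single monomial with coefficient $\pm1$, and repeated differences are handled through multiplicities) only make explicit what the paper's short proof leaves implicit.
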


\begin{proof}
Expanding the determinant gives
\[
    \det C(z)
    =
    \sum_{\pi\in S_n}
    \sgn(\pi)
    \prod_{a\in H}z_{a+\pi(a)}.
\]
Hence
\[
    [z^{\mathbf m}]\det C(z)
\]
is the signed sum of exactly those permutations whose realized difference
multiplicities are $\mathbf m$.  If this signed sum is nonzero, then at least
one such permutation exists.  That permutation is a realization of $\cD$.
\end{proof}

Lemma~\ref{lem:circulant-certificate} is only a sufficient certificate:
existence of a realization does not imply that the signed count is nonzero,
since even and odd realizations could cancel.  By contrast,
$[z^{\mathbf m}]\per C(z)>0$ is equivalent to existence and gives the exact
number of realizations.  All determinant and permanent coefficients below are
integers.

For an integer $a\neq0$, the \emph{$2$-adic valuation} $\vTwo(a)$ is the
largest integer $t\geq0$ such that $2^t$ divides $a$; set
$\vTwo(0)=+\infty$.  Thus $\vTwo(a)=t<\infty$ says simultaneously that
$a\neq0$, $2^t\mid a$, and $2^{t+1}\nmid a$.

The stronger noncancellation statement that drives our proof is the following.

\begin{theorem}[Full coefficient nonvanishing]
\label{thm:full-coefficient}
Let $|\mathbf m|=n$.  Then
\begin{enumerate}[label=\textup{(\roman*)}]
    \item if $\sigma(\mathbf m)\neq0$, then
    \[
        [z^{\mathbf m}]\det C(z)=0;
    \]
    \item if $\sigma(\mathbf m)=0$, then
    \[
        \vTwo\!\left([z^{\mathbf m}]\det C(z)\right)
        =
        \vTwo\!\left(\frac{n!}{\mathbf m!}\right),
    \]
    and in particular
    \[
        [z^{\mathbf m}]\det C(z)\neq0.
    \]
\end{enumerate}
\end{theorem}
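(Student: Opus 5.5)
The proof will run the whole argument through the Walsh diagonalization of the group circulant. For each character $\chi$ of $H$ (a homomorphism $H\to\{\pm1\}$), the vector $(\chi(b))_{b\in H}$ is an eigenvector of $C(z)$ with eigenvalue $\lambda_\chi(z)=\sum_{x\in H}\chi(x)z_x$. Since the $n$ characters form an orthogonal basis, we get $\det C(z)=\prod_{\chi}\lambda_\chi(z)$.

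Write the multiset $\mathbf m$ as a list of columns $x_1,\dots,x_n$, with each $x$ repeated $m_x$ times. Let $A_{\mathbf m}=(\chi(x_j))_{\chi,j}$ be the $n\times n$ repeated-column character matrix. Expanding the product of the $n$ linear forms gives
\[
    [z^{\mathbf m}]\det C(z)=\frac{\per A_{\mathbf m}}{\mathbf m!}.
\]
So the theorem reduces to two facts. Part (i) becomes $\per A_{\mathbf m}=0$ when $\sigma(\mathbf m)\neq0$. Part (ii) becomes $\vTwo(\per A_{\mathbf m})=\vTwo(n!)$ when $\sigma(\mathbf m)=0$.

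\textbf{Part (i).} Multiplying every row index by a fixed character $\psi$ permutes the rows, so $\per A_{\mathbf m}$ is unchanged. On the other hand, each term of the permanent expansion gets multiplied by $\prod_j\psi(x_j)=\psi(\sigma(\mathbf m))$. If $\sigma(\mathbf m)\neq0$, choose $\psi$ with $\psi(\sigma(\mathbf m))=-1$. Then $\per A_{\mathbf m}=-\per A_{\mathbf m}$, so it vanishes.

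\textbf{Part (ii).} I would prove the more general claim that $\vTwo(\per A)=\vTwo(n!)$ for every zero-sum list of $n=2^r$ columns in $H\cong\F_2^r$, by induction on $r$. The base case $r=0$ is $\per(1)=1$.

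For the inductive step, fix a nontrivial character $\psi$ with kernel $K$, a subgroup of index $2$. Pair each row $\chi$ with the row $\chi\psi$. In the permanent expansion, group the terms according to the unordered column pair $\{j,k\}$ sent to each row pair. The two orderings of that pair contribute
\[
    \chi(x_j)\chi\psi(x_k)+\chi(x_k)\chi\psi(x_j)=\chi(x_j+x_k)\bigl(\psi(x_j)+\psi(x_k)\bigr).
\]
This equals $2\chi(x_j+x_k)$ when $x_j,x_k$ lie on the same side of $K$, and $0$ otherwise. In the surviving case $x_j+x_k\in K$, and $\chi(x_j+x_k)$ depends only on $\chi|_K$. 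The $n/2$ row pairs correspond bijectively to the characters of $K$. Hence
\[
    \per A=2^{n/2}\sum_{P}\per A'_P,
\]
where $P$ runs over the perfect matchings of the columns into same-side pairs, and $A'_P$ is the $(n/2)\times(n/2)$ character matrix of $K$ whose columns are the pair sums $x_j+x_k$. Each list of pair sums has total $\sigma=0$, so by induction $\per A'_P=2^{\vTwo((n/2)!)}\cdot(\text{odd})$.

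The number of admissible matchings $P$ is $(c-1)!!\,(n-c-1)!!$, where $c$ is the number of columns lying in $K$. Here $n-c$ is even because $\psi(\sigma)=1$, and hence $c$ is even too. Both double factorials are products of odd numbers, so the count is odd. Therefore the sum over $P$ is $2^{\vTwo((n/2)!)}$ times an odd number. Using $\vTwo(n!)=n/2+\vTwo((n/2)!)$, this gives $\vTwo(\per A)=\vTwo(n!)$, which completes the induction.

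The step I expect to need the most care is this pairing recursion. One must check that the row-pair bookkeeping really reproduces $\per A'_P$ with no extra multiplicity: row pairs correspond to cosets $\{\chi,\chi\psi\}$, each coset gets an unordered column pair, and the assignment of column pairs to cosets is a bijection. One must also check that the parity count of the matchings $P$ is right. The diagonalization and part (i) are routine.
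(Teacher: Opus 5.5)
Your proposal is correct and follows essentially the same route as the paper: Walsh diagonalization $\det C(z)=\prod_\rho L_\rho(z)$, the correspondence $\mathbf m!\,[z^{\mathbf m}]\det C(z)=\per A_{\mathbf m}$ (the paper's $\per W_{\cD}$), the character-shift sign argument for (i), and for (ii) the row-pair recursion over same-side column pairings, whose number $(c-1)!!\,(n-c-1)!!$ is odd. One small slip: when both columns lie outside $K$, the $2\times2$ block equals $-2\chi(x_j+x_k)$, not $2\chi(x_j+x_k)$, so the recursion should read $\per A=(-1)^{(n-c)/2}\,2^{n/2}\sum_P\per A'_P$ (the paper's factor $(-1)^{n_1/2}$); this sign is the same for every $P$, so your valuation argument is unaffected.
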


Part~\textup{(ii)} is strictly stronger than the existence conclusion of
Theorem~\ref{thm:binary-hall}.  Hall's theorem and \emph{BSolution} guarantee at least
one realization, equivalently a positive permanent coefficient.  Theorem~\ref{thm:full-coefficient} determines the exact $2$-adic valuation of the
signed determinant coefficient and therefore proves that complete cancellation
between even and odd realizations is impossible.

The matrix $C(z)$ directly encodes the prescribed-difference problem, but its
coefficients are difficult to analyze.  We therefore introduce a closely
related sign matrix with considerably more structure.  The new matrix
diagonalizes $C(z)$, so their determinants are connected by an explicit
identity.  After extracting the coefficient selected by the multiplicity
vector $\mathbf m$, the problem becomes a permanent calculation for a
repeated-column version of the new matrix.  We shall then carry out the
noncancellation argument entirely in this new setting and return to $C(z)$ only
at the end.

\subsection{Walsh factorization and repeated-column permanents}

Identify the character group $\whH$ with $H$ and put
$\eps_\rho(x)=(-1)^{\langle\rho,x\rangle}$ for $\rho,x\in H$.  Define the
character matrix
\[
    W=\bigl(\eps_\rho(x)\bigr)_{\rho\in\whH,\,x\in H}.
\]
Its rows are indexed by characters and its columns by possible prescribed
differences $x\in H$.  Thus a column index is a possible value of $a+b$, not
an endpoint.

In the binary setting $W$ is the Walsh--Hadamard character matrix; the Walsh
system originates in \cite{Walsh1923}.  We use the name only for this character
transform and the matrices derived from it.

For each $\rho\in\whH$, define the linear form
\[
    L_\rho(z)=\sum_{x\in H}\eps_\rho(x)z_x.
\]
Character orthogonality gives
\[
    WW^{\mathsf T}=nI_n,
    \qquad
    W^{-1}=n^{-1}W^{\mathsf T}.
\]

The character linear forms may be arranged as the diagonal entries of the
matrix
\[
    \Lambda(z)
    =
    \begin{pmatrix}
        L_{\rho_0}(z) & 0 & \cdots & 0\\
        0 & L_{\rho_1}(z) & \cdots & 0\\
        \vdots & \vdots & \ddots & \vdots\\
        0 & 0 & \cdots & L_{\rho_{n-1}}(z)
    \end{pmatrix},
\]
where
\[
    \whH=\{\rho_0,\rho_1,\ldots,\rho_{n-1}\}.
\]
Thus $\Lambda(z)$ is simply the matrix whose diagonal entries are the forms
$L_\rho(z)$.

The following diagonalization is standard for group circulants.  We include the
short calculation in our notation.

\begin{lemma}[Character diagonalization of the group circulant]
\label{lem:character-diagonalization}
With $\Lambda(z)$ as above,
\[
    C(z)=\frac1n W^{\mathsf T}\Lambda(z)W.
\]
Consequently,
\[
    \det C(z)=\prod_{\rho\in\whH}L_\rho(z).
\]
\end{lemma}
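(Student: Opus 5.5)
The plan is to verify the matrix identity entrywise and then take determinants. First I will compute the $(a,b)$ entry of $W^{\mathsf T}\Lambda(z)W$ for $a,b\in H$. Here the rows of $W^{\mathsf T}$ and the columns of $W$ are indexed by elements of $H$. The entry is
\[
    \sum_{\rho\in\whH}\eps_\rho(a)L_\rho(z)\eps_\rho(b)
    =\sum_{x\in H}z_x\sum_{\rho\in\whH}\eps_\rho(a+b+x),
\]
using the multiplicativity $\eps_\rho(a)\eps_\rho(b)\eps_\rho(x)=\eps_\rho(a+b+x)$, which holds because $\langle\rho,\cdot\rangle$ is linear over $\F_2$. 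Character orthogonality, already recorded as $WW^{\mathsf T}=nI_n$, gives
\[
    \sum_{\rho}\eps_\rho(y)=n\,[y=0].
\]
Only $x=a+b$ survives, so the entry equals $n\,z_{a+b}$. Dividing by $n$ yields $C(z)=\frac1nW^{\mathsf T}\Lambda(z)W$.

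For the determinant, I will use $\det W^{\mathsf T}=\det W$ and multiplicativity:
\[
    \det C(z)=n^{-n}(\det W)^2\prod_\rho L_\rho(z).
\]
From $WW^{\mathsf T}=nI_n$ we get $(\det W)^2=\det(WW^{\mathsf T})=n^n$, so the scalar factor is $1$ and $\det C(z)=\prod_{\rho\in\whH}L_\rho(z)$.

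There is no real obstacle. The only care needed is bookkeeping: the same index set $H$ plays the roles of characters and of differences, so the middle sum in $W^{\mathsf T}\Lambda W$ must run over characters while the endpoints index the outer rows and columns. The sign ambiguity of $\det W$ disappears because it appears squared.
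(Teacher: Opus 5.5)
Your proof is correct and follows the paper's argument essentially step for step: the entrywise computation of $W^{\mathsf T}\Lambda(z)W$ reduced by character orthogonality to $n\,z_{a+b}$, followed by taking determinants with $(\det W)^2=n^n$. The only implicit step is that the sum over characters $\sum_{\rho}\eps_\rho(y)=n\,[y=0]$ comes from $W^{\mathsf T}W=nI_n$, which is equivalent to $WW^{\mathsf T}=nI_n$ for a square matrix (or follows from the symmetry of $W$), so nothing is missing.
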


\begin{proof}[Proof of Lemma~\ref{lem:character-diagonalization}]
The $(a,b)$ entry of the right-hand side is
\begin{align*}
\frac1n\sum_{\rho\in\whH}
\eps_\rho(a)L_\rho(z)\eps_\rho(b)
&=
\frac1n\sum_{x\in H}z_x
\sum_{\rho\in\whH}\eps_\rho(a+b+x)\\
&=z_{a+b},
\end{align*}
by character orthogonality.  Hence
\[
    C(z)=\frac1nW^{\mathsf T}\Lambda(z)W.
\]
Taking determinants and using
\[
    \det(W)^2=n^n
\]
gives
\[
    \det C(z)=\prod_{\rho\in\whH}L_\rho(z).
\]
\end{proof}

Put
\[
    P_H(z)=\prod_{\rho\in\whH}L_\rho(z),
    \qquad
    b(\mathbf m)=[z^{\mathbf m}]P_H(z).
\]
By Lemma~\ref{lem:character-diagonalization},
\[
    b(\mathbf m)=[z^{\mathbf m}]\det C(z).
\]
Thus the coefficient in Theorem~\ref{thm:full-coefficient} may now be studied
inside the product of character linear forms.

For the fixed prescribed-difference matrix
\[
    \cD=[d_0,d_1,\ldots,d_{n-1}],
\]
define the \emph{prescribed-difference character matrix}
\[
    W_{\cD}
    =
    \bigl(\eps_\rho(d_i)\bigr)_{\rho\in\whH,\,i\in[n]}.
\]
The distinction between $W$ and $W_{\cD}$ is important.  The columns of $W$
are indexed by all possible difference values $x\in H$, with one column for
each $x$.  By contrast, the columns of $W_{\cD}$ are indexed by the labeled
occurrences
\[
    d_0,d_1,\ldots,d_{n-1}
\]
in the fixed prescribed-difference matrix $\cD$.

More precisely, the $i$th column of $W_{\cD}$ is the column of $W$ indexed by
the difference value $d_i$.  Thus equal prescribed differences give equal but
\emph{labeled} columns.  Equivalently, if $m_x$ is the multiplicity of $x$ in
$\cD$, then $W_{\cD}$ is obtained from $W$ by taking $m_x$ labeled copies of
the column indexed by $x$, for every $x\in H$.  Hence $W_{\cD}$ depends on
$\cD$ only through $\mathbf m$, up to a permutation of its columns.

For an $n\times n$ matrix $A=(a_{ij})$, its \emph{permanent} is
\[
    \per A
    =
    \sum_{\pi\in S_n}
    \prod_{i=0}^{n-1}a_{i,\pi(i)}.
\]
It has the same expansion as a determinant but without the factor
$\sgn(\pi)$.  A permanent can in general be zero.

After choosing a basis of $H$ and ordering its elements accordingly, the
character matrix $W$ is the classical Sylvester--Hadamard matrix of order
$n=2^r$, up to permutations of rows and columns; see the recursive Sylvester
construction \cite{Sylvester1867}.  Since the permanent is invariant under row
and column permutations, known results for the Sylvester--Hadamard matrix apply
directly to $W$.

In particular, Chabaud proved that, for $r\geq2$,
\[
    \vTwo(\per W)=\vTwo(n!)=n-1,
\]
and hence
\[
    \per W\neq0
\]
\cite{Chabaud2018}.

This result, however, treats only the special prescribed-difference profile
\[
    m_x=1
    \qquad
    \text{for every }x\in H.
\]
Equivalently, it treats the special case in which the columns of the fixed
matrix $\cD$ are a permutation of the elements of $H$.  In that case
$W_{\cD}$ is equal to $W$ up to a permutation of its columns, and the relevant
coefficient is the square-free coefficient
\[
    \left[\prod_{x\in H}z_x\right]P_H(z).
\]
This is a condition on the prescribed-difference matrix $\cD$.  It is not a
consequence of the realizing permutation $\pi$: although $\pi$ uses every
endpoint exactly once, the values $a+\pi(a)$ may repeat.

A general Hall instance may repeat some prescribed differences many times and
omit other values of $H$ entirely.  The running profile from
Example~\ref{ex:running-hall},
\[
    \cD=[01,01,10,10],
\]
requires the coefficient
\[
    [z_{01}^2z_{10}^2]P_H(z)
\]
and the repeated-column matrix $W_{\cD}$, rather than $W$.  Thus Chabaud's
nonvanishing theorem for $\per W$ does not prove the Hall statement needed
here.  We require a valuation theorem for $\per W_{\cD}$ for every zero-sum
prescribed-difference matrix $\cD$.

To exploit the character rows in the valuation argument, we want to replace the coefficient
of the product of linear forms by a permanent whose columns are the labeled
requests themselves.  Repeated difference values then appear as repeated but
distinct labeled columns.  The only discrepancy is the number of ways to
label equal occurrences, which is exactly the factorial factor below.

\begin{lemma}[Coefficient--permanent correspondence]
\label{lem:coeff-permanent}
For every prescribed-difference matrix $\cD$ with multiplicity vector
$\mathbf m$,
\[
    \mathbf m!\,b(\mathbf m)=\per W_{\cD}.
\]
\end{lemma}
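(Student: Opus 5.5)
The plan is to expand both sides as sums over the same combinatorial objects and compare them term by term. The left side involves only the coefficient $b(\mathbf m)$ of $P_H(z)=\prod_{\rho\in\whH}L_\rho(z)$. Expanding the product distributively, each term picks, for every character $\rho$, one summand $\eps_\rho(x)z_x$ of $L_\rho(z)$. So a term is a function $f:\whH\to H$, and it contributes
\[
    \prod_{\rho\in\whH}\eps_\rho(f(\rho))\,\prod_{\rho}z_{f(\rho)}.
\]
Its monomial is $z^{\mathbf m}$ exactly when $|f^{-1}(x)|=m_x$ for every $x\in H$. Hence
\[
    b(\mathbf m)=\sum_{f}\prod_{\rho\in\whH}\eps_\rho(f(\rho)),
\]
where $f$ ranges over all maps $\whH\to H$ with fiber sizes $\mathbf m$.

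Next we expand the right side. Index the rows of $W_{\cD}$ by $\whH$ and the columns by $[n]$, and fix a row ordering $\rho_0,\ldots,\rho_{n-1}$. Then
\[
    \per W_{\cD}=\sum_{\tau}\prod_{\rho\in\whH}\eps_\rho(d_{\tau(\rho)}),
\]
where $\tau$ ranges over all bijections $\whH\to[n]$. Row/column permutation invariance of the permanent ensures this does not depend on any ordering.

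The key step is to compare the two sums via the map $\Phi:\tau\mapsto f=d\circ\tau$, where $d:[n]\to H$ is $i\mapsto d_i$. First, $f$ has fiber sizes $\mathbf m$, since $\tau$ is a bijection and exactly $m_x$ labels $i$ have $d_i=x$. Second, the summand for $\tau$ equals the summand for $\Phi(\tau)$, because it depends only on the values $d_{\tau(\rho)}$. Third, each admissible $f$ has exactly $\mathbf m!$ preimages. These are the bijections $\tau$ that send $f^{-1}(x)$ onto $\{i:d_i=x\}$ for every $x$, and both sets have size $m_x$, so there are $\prod_x m_x!$ choices. Summing over fibers of $\Phi$ then gives $\per W_{\cD}=\mathbf m!\,b(\mathbf m)$.

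No step is a real obstacle. The only point needing care is the fiber count in the third step, namely the verification that $\Phi$ is surjective onto maps with fiber sizes $\mathbf m$ and exactly $\mathbf m!$-to-one. That the weight is constant on fibers is immediate.
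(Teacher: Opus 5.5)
Your proposal is correct and is essentially the paper's argument. Both proofs identify each term of the coefficient expansion of $P_H(z)$, that is, a choice of one summand $\eps_\rho(x)z_x$ per character factor, with exactly $\mathbf m!$ equal-valued terms of $\per W_{\cD}$ obtained by labeling the occurrences of each value. Your explicit map $\tau\mapsto d\circ\tau$, with its surjectivity and $\mathbf m!$-to-one fiber count, simply formalizes the paper's occurrence-labeling count.
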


The correspondence is easiest to see directly in the matrices.

\begin{example}[Labeled repetitions in the running Hall profile]
\label{ex:character-summary}
Return to the reduced profile
\[
    \cD=[01,01,10,10]
\]
from Example~\ref{ex:running-hall}.  Before repeating columns, attach the
variable $z_x$ to the Walsh column indexed by the possible difference
$x\in H$:
\[
\widetilde W(z)
:=
W
\begin{pmatrix}
z_{00}&0&0&0\\
0&z_{01}&0&0\\
0&0&z_{10}&0\\
0&0&0&z_{11}
\end{pmatrix}
=
\begin{array}{c|cccc}
 &00&01&10&11\\ \hline
00& z_{00}& z_{01}& z_{10}& z_{11}\\
01& z_{00}&-z_{01}& z_{10}&-z_{11}\\
10& z_{00}& z_{01}&-z_{10}&-z_{11}\\
11& z_{00}&-z_{01}&-z_{10}& z_{11}
\end{array}.
\]
Here there is one column for each \emph{possible difference value}.  The sum
of row $\rho$ is exactly the character form $L_\rho(z)$.

The prescribed list $\cD$ does not use the four possible values once each.
It asks for two labeled occurrences of $01$ and two labeled occurrences of
$10$.  Repeating the corresponding columns gives the variable-decorated
matrix
\[
\widetilde W_{\cD}(z)
=
\begin{array}{c|cccc}
 &d_0=01&d_1=01&d_2=10&d_3=10\\ \hline
00& z_{01}& z_{01}& z_{10}& z_{10}\\
01&-z_{01}&-z_{01}& z_{10}& z_{10}\\
10& z_{01}& z_{01}&-z_{10}&-z_{10}\\
11&-z_{01}&-z_{01}&-z_{10}&-z_{10}
\end{array}.
\]
Removing the displayed variables leaves the prescribed-difference character
matrix $W_{\cD}$.  The two $z_{01}$ columns and the two $z_{10}$ columns make
the source of the factorial factor visible: the polynomial coefficient sees
only two occurrences of each value, whereas the permanent sees four labeled
columns.

Here
\[
    m_{01}=m_{10}=2,
    \qquad
    \mathbf m!=2!2!=4.
\]
Example~\ref{ex:running-hall} gave
\[
    b(\mathbf m)
    =
    [z_{01}^2z_{10}^2]\det C(z)
    =-2.
\]
Thus Lemma~\ref{lem:coeff-permanent} predicts
\[
    \per W_{\cD}=2!2!\cdot(-2)=-8,
\]
which is the direct permanent calculation.  The factor $\mathbf m!$ is exactly
the number of ways to attach the labels $d_0,d_1$ to the two occurrences of
$01$ and the labels $d_2,d_3$ to the two occurrences of $10$.
\end{example}

\begin{proof}[Proof of Lemma~\ref{lem:coeff-permanent}]
Fix the monomial
\[
    z^{\mathbf m}=\prod_{x\in H}z_x^{m_x}.
\]
Consider one contribution to its coefficient in
\[
    P_H(z)=\prod_{\rho\in\whH}
    \left(\sum_{x\in H}\eps_\rho(x)z_x\right).
\]
Such a contribution chooses from every character factor, indexed by
$\rho\in\whH$, one term
\[
    \eps_\rho(x)z_x.
\]
Altogether, the expansion selects the difference value $x$ exactly $m_x$
times for every $x\in H$.

Now compare this choice with a term in the permanent of $W_{\cD}$.  Whenever
the expansion selects the value $x$, we must assign that occurrence to one of
the labeled columns of $W_{\cD}$ whose prescribed difference is $x$.  At the
first occurrence of $x$, there are $m_x$ possible labeled columns.  At the
second occurrence there remain $m_x-1$ possibilities, and so on.  Thus the
$m_x$ occurrences of $x$ give
\[
    m_x(m_x-1)\cdots1=m_x!
\]
different permanent terms.

The assignments for distinct difference values $x\in H$ are independent.
Hence every contribution to
\[
    [z^{\mathbf m}]P_H(z)=b(\mathbf m)
\]
appears exactly
\[
    \prod_{x\in H}m_x!=\mathbf m!
\]
times in $\per W_{\cD}$.

All these permanent terms have the same value, since the columns of
$W_{\cD}$ corresponding to the same prescribed difference $x$ are identical.
Therefore
\[
    \per W_{\cD}=\mathbf m!\,b(\mathbf m),
\]
as required.
\end{proof}

\subsection{The \texorpdfstring{$2$}{2}-adic recursion}

Our goal is to determine the exact $2$-adic valuation of
$\per W_{\cD}$.  We shall do this recursively on the dimension of $H$.
The intended reduction is from a character matrix of order $n$ to character
matrices of order $n/2$.  To obtain such a reduction, we pair character rows,
regroup the permanent terms according to induced pairings of the labeled
columns, and exploit a local $2\times2$ cancellation.  The surviving column
pairs collapse to their sums in an index-two subspace.  The crucial point is
that both the permanent structure and the zero-sum condition survive this
compression, so the resulting matrices are smaller instances of exactly the
same problem.

Throughout this subsection, let
\[
    \cD=[d_0,d_1,\ldots,d_{n-1}],
    \qquad
    d_i\in H,
\]
be a prescribed-difference matrix on $H$.  Zero differences are allowed.  This
is necessary because the compression step replaces two prescribed differences
by their sum, which may be zero even when both original differences are
nonzero.

Fix a nonzero character index $\eta\in\whH$ and define
\[
    H_0=\{x\in H:\langle\eta,x\rangle=0\},
    \qquad
    H_1=\{x\in H:\langle\eta,x\rangle=1\}.
\]
Thus $H_0=\ker\eta$ is a subspace of index two and $H_1$ is its other coset.
The first step is local: after pairing the rows $\rho$ and $\rho+\eta$, we
ask what happens when two labeled columns are assigned to this row pair.
The next lemma shows the exact dichotomy needed for the recursion.  Columns
from opposite sides of the $\eta$-cut cancel, while a same-side pair survives
and is remembered only through the sum of its two differences.

\begin{lemma}[Two-row cancellation]
\label{lem:two-row-cancellation}
For $\rho\in\whH$ and prescribed-difference indices $i,j\in[n]$, define
\[
B_\rho(i,j)=
\per
\begin{pmatrix}
\eps_\rho(d_i)&\eps_\rho(d_j)\\
\eps_{\rho+\eta}(d_i)&\eps_{\rho+\eta}(d_j)
\end{pmatrix}.
\]
Then
\[
B_\rho(i,j)
=
\eps_\rho(d_i+d_j)
\bigl(\eps_\eta(d_i)+\eps_\eta(d_j)\bigr).
\]
Consequently,
\[
    B_\rho(i,j)=0
\]
when $d_i$ and $d_j$ lie in different sets $H_0,H_1$, whereas if
$d_i,d_j\in H_t$, then
\[
    B_\rho(i,j)
    =
    2(-1)^t\eps_\rho(d_i+d_j).
\]
\end{lemma}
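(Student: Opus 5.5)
The plan is a direct expansion of the $2\times2$ permanent, followed by the multiplicativity of characters. By definition of the permanent,
\[
B_\rho(i,j)=\eps_\rho(d_i)\eps_{\rho+\eta}(d_j)+\eps_\rho(d_j)\eps_{\rho+\eta}(d_i).
\]
I would first use that $\eps_{\rho+\eta}(x)=(-1)^{\langle\rho,x\rangle+\langle\eta,x\rangle}=\eps_\rho(x)\eps_\eta(x)$, since the pairing is bilinear over $\F_2$. Substituting this gives
\[
B_\rho(i,j)=\eps_\rho(d_i)\eps_\rho(d_j)\bigl(\eps_\eta(d_j)+\eps_\eta(d_i)\bigr).
\]
Multiplicativity in the argument, $\eps_\rho(d_i)\eps_\rho(d_j)=\eps_\rho(d_i+d_j)$, then yields the displayed identity.

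For the consequences I would read off $\eps_\eta(x)=(-1)^t$ exactly when $x\in H_t$, because $H_t$ is defined by $\langle\eta,x\rangle=t$.

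\emph{Opposite sides.} If $d_i$ and $d_j$ lie in different sets $H_0,H_1$, the two signs are $+1$ and $-1$. Their sum vanishes, so $B_\rho(i,j)=0$.

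\emph{Same side.} If $d_i,d_j\in H_t$, the sum is $2(-1)^t$, and this gives the stated formula.

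There is no genuine obstacle. The only point needing care is the identity $\eps_{\rho+\eta}=\eps_\rho\eps_\eta$, which relies on linearity of the pairing $\langle\cdot,\cdot\rangle$ in the character variable under the identification $\whH\cong H$. The factorization is also symmetric in $i,j$, as it must be for a permanent.
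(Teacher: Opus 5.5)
Your proposal is correct and follows the paper's proof essentially verbatim: expand the $2\times2$ permanent, use $\eps_{\rho+\eta}(x)=\eps_\rho(x)\eps_\eta(x)$ and $\eps_\rho(d_i)\eps_\rho(d_j)=\eps_\rho(d_i+d_j)$ to factor, and then read off the two cases from $\eps_\eta(x)=(-1)^t$ for $x\in H_t$.
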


\begin{example}[A vanishing and a surviving four-entry block]
\label{ex:running-block}
Continue with the profile of Example~\ref{ex:running-hall}.  Take
\[
    H=\F_2^2,
    \qquad
    \eta=01,
    \qquad
    \rho=00.
\]
Then
\[
    H_0=\{00,10\},
    \qquad
    H_1=\{01,11\}.
\]
Pair one occurrence of $01\in H_1$ with one occurrence of $10\in H_0$.
The four entries cut out by the row pair $\{00,01\}$ and these two columns are
\[
\begin{pmatrix}
1&1\\
-1&1
\end{pmatrix}.
\]
There are two possible internal assignments of the two columns to the two rows,
and their contributions cancel:
\[
    1\cdot1+1\cdot(-1)=0.
\]
Thus a column pair taken from opposite sides of the character cut disappears
before the recursion begins.

By contrast, pair the two labeled occurrences of $01$, both in $H_1$.  The
corresponding four-entry block is
\[
\begin{pmatrix}
1&1\\
-1&-1
\end{pmatrix},
\]
and the two internal assignments now reinforce each other:
\[
    1\cdot(-1)+1\cdot(-1)=-2.
\]
The same-side pair survives with the factor $2$ predicted by
Lemma~\ref{lem:two-row-cancellation}.  This local cancellation is the first
half of the shrink step: incompatible column pairs vanish, while compatible pairs are
allowed to collapse to one smaller column.
\end{example}

\begin{proof}
Using
\[
    \eps_{\rho+\eta}(x)
    =
    \eps_\rho(x)\eps_\eta(x),
\]
we obtain
\begin{align*}
B_\rho(i,j)
&=
\eps_\rho(d_i)\eps_{\rho+\eta}(d_j)
+
\eps_\rho(d_j)\eps_{\rho+\eta}(d_i)\\
&=
\eps_\rho(d_i+d_j)
\bigl(\eps_\eta(d_i)+\eps_\eta(d_j)\bigr).
\end{align*}
If $d_i$ and $d_j$ lie on opposite sides of the character cut, the last
parenthesis is
\[
    1+(-1)=0.
\]
If both lie in $H_t$, then it is
\[
    2(-1)^t.
\]
\end{proof}

We now pair all rows in this way.  The involution
\[
    \rho\longmapsto\rho+\eta
\]
has no fixed points on $\whH$.  It therefore partitions the rows of
$W_{\cD}$ into $n/2$ pairs
\[
    \{\rho,\rho+\eta\}.
\]
Choose a transversal $R\subseteq\whH$ containing one representative from each
row pair.  A \emph{column pairing} $\cP$ is a partition of the labeled indices
$[n]$ into $n/2$ unordered pairs.

Before using the cancellation lemma, we must justify that grouping rows and
columns into pairs does not destroy the permutation condition in the
permanent.  The next lemma is the bookkeeping statement that makes the
compression exact: every permanent term determines a unique pairing of the
labeled columns and a bijection from row pairs to column pairs, while the
$2\times2$ permanent records the two internal assignments that were forgotten.

\begin{lemma}[Exact regrouping of the permanent]
\label{lem:exact-permanent-regrouping}
We have
\[
\per W_{\cD}
=
\sum_{\cP}
\sum_{\substack{\phi:R\to\cP\\ \phi\text{ bijective}}}
\prod_{\rho\in R}B_\rho(i_\rho,j_\rho),
\]
where
\[
    \phi(\rho)=\{i_\rho,j_\rho\},
\]
and the outer sum is over all column pairings $\cP$ of $[n]$.
\end{lemma}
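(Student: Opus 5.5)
The plan is a bijective rearrangement of the permanent expansion.  Start from
\[
    \per W_{\cD}=\sum_{\tau}\prod_{\rho\in\whH}\eps_\rho(d_{\tau(\rho)}),
\]
where $\tau$ ranges over all bijections $\whH\to[n]$.  Then set up a bijection between these $\tau$ and the triples
\[
    (\cP,\phi,(\text{internal choices})),
\]
with $\cP$ a column pairing, $\phi:R\to\cP$ a bijection, and, for each $\rho\in R$, a choice of which of the two columns of $\phi(\rho)$ goes to row $\rho$ and which to row $\rho+\eta$.  After that, sum over the internal choices; each $\rho$ contributes exactly the two terms of the $2\times2$ permanent $B_\rho(i_\rho,j_\rho)$.

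The steps are as follows.

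First, given $\tau$, define the pairs $\{\tau(\rho),\tau(\rho+\eta)\}$ for $\rho\in R$.  Since $\tau$ is a bijection and the sets $\{\rho,\rho+\eta\}$ partition $\whH$, these $n/2$ unordered pairs partition $[n]$.  So they form a pairing $\cP(\tau)$, and $\phi_\tau(\rho)=\{\tau(\rho),\tau(\rho+\eta)\}$ is a bijection $R\to\cP(\tau)$.

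Second, conversely, fix $(\cP,\phi)$ and, for each $\rho\in R$, an ordering $(i,j)$ of $\phi(\rho)$.  This determines a unique bijection $\tau$ with $\tau(\rho)=i$ and $\tau(\rho+\eta)=j$.  The two constructions are mutually inverse, so the map
\[
    \tau\longmapsto\bigl(\cP(\tau),\phi_\tau,\text{orderings}\bigr)
\]
is a bijection onto the set of all triples.

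Third, the weight $\prod_\rho\eps_\rho(d_{\tau(\rho)})$ factors over $\rho\in R$ as
\[
    \prod_{\rho\in R}\eps_\rho(d_i)\,\eps_{\rho+\eta}(d_j).
\]
For fixed $(\cP,\phi)$, summing over the $2^{n/2}$ independent orderings therefore gives $\prod_{\rho\in R}B_\rho(i_\rho,j_\rho)$.  Here we use that $B_\rho$ is symmetric in $i,j$, so it does not matter which element of $\phi(\rho)$ is labelled $i_\rho$.

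Finally, summing over $(\cP,\phi)$ yields the claimed identity.  No step is a real obstacle.  The only point needing care is checking that the correspondence is genuinely bijective: each $\tau$ is counted exactly once, with no overcounting from unordered pairs.  This is guaranteed because the orderings are recorded separately and the unordered data $(\cP,\phi)$ are recovered uniquely from $\tau$.  Note that no cancellation (Lemma~\ref{lem:two-row-cancellation}) is used here; this lemma is pure bookkeeping.
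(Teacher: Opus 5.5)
Your proposal is correct and follows essentially the same argument as the paper. Both decompose each permanent term into the induced column pairing $\cP$, the bijection $\phi:R\to\cP$, and the internal ordering within each row pair, and then sum the two orderings per row pair to obtain $B_\rho(i_\rho,j_\rho)$; your explicit remark that $B_\rho$ is symmetric in its two arguments is a small detail the paper leaves implicit.
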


\begin{proof}
A term of $\per W_{\cD}$ assigns the $n$ labeled columns bijectively to the
$n$ rows.  For each row pair
\[
    \{\rho,\rho+\eta\},
\]
forget the order of the two columns assigned to its two rows.  This produces
one unordered pair of labeled column indices.  Doing this for every row pair
produces a unique column pairing $\cP$ and a unique bijection
\[
    \phi:R\to\cP
\]
recording which column pair is assigned to each row pair.

Conversely, fix $(\cP,\phi)$.  For each $\rho\in R$, there are exactly two
ways to assign the two columns of $\phi(\rho)$ to the rows
$\rho$ and $\rho+\eta$.  The sum of these two assignments is precisely
\[
    B_\rho(i_\rho,j_\rho).
\]
The choices for distinct row pairs are independent.  Hence the sum of all
permanent terms corresponding to the fixed pair $(\cP,\phi)$ is
\[
    \prod_{\rho\in R}B_\rho(i_\rho,j_\rho).
\]
Every permanent term determines a unique $(\cP,\phi)$ and a unique internal
ordering inside each row pair, so no term is omitted or counted twice.
\end{proof}

We now retain only the column pairings that survive the local cancellation.
Put
\[
    n_t=|\{i\in[n]:d_i\in H_t\}|
    \qquad(t=0,1).
\]
A column pairing $\cP$ is \emph{$\eta$-compatible} if every pair
\[
    p=\{i,j\}\in\cP
\]
lies entirely on one side of the character cut:
\[
    d_i,d_j\in H_0
    \qquad\text{or}\qquad
    d_i,d_j\in H_1.
\]
Let
\[
    \Pi_\eta(\cD)
\]
denote the set of all $\eta$-compatible column pairings.

For
\[
    \cP\in\Pi_\eta(\cD)
\]
and
\[
    p=\{i,j\}\in\cP,
\]
put
\[
    s(p)=d_i+d_j.
\]
Since $p$ is compatible,
\[
    s(p)\in H_0.
\]
Define the compressed prescribed-difference matrix
\[
    \cD_{\cP}
    =
    [\,s(p):p\in\cP\,].
\]
Thus $\cD_{\cP}$ has $n/2$ labeled columns in $H_0$.  Its character matrix
over the smaller group $H_0$ is denoted by
\[
    W^{H_0}_{\cD_{\cP}}.
\]

We can now assemble the two ingredients.  Exact regrouping organizes the
permanent by labeled column pairings, and the two-row cancellation eliminates
every pairing that crosses the character cut.  What remains must still be
identified as a permanent of the same type on the smaller space $H_0$.
The next lemma is the recursive identity used in the induction; the second
part is equally important, because it shows that a zero-sum instance remains
zero-sum after compression.

\begin{lemma}[Permanent recursion]
\label{lem:compatible-pairing-reduction}
Assume that $n_1$ is even.  Then
\[
    \per W_{\cD}
    =
    2^{n/2}(-1)^{n_1/2}
    \sum_{\cP\in\Pi_\eta(\cD)}
    \per W^{H_0}_{\cD_{\cP}}.
\]
Moreover, for every
\[
    \cP\in\Pi_\eta(\cD),
\]
we have
\[
    |\cD_{\cP}|=\frac n2
\]
and
\[
    \sum_{p\in\cP}s(p)
    =
    \sum_{i=0}^{n-1}d_i.
\]
Hence, if $\cD$ is zero-sum, then every compressed matrix
$\cD_{\cP}$ is zero-sum in $H_0$.
\end{lemma}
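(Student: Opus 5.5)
Start from Lemma~\ref{lem:exact-permanent-regrouping}, which writes $\per W_{\cD}$ as a sum over column pairings $\cP$ and bijections $\phi:R\to\cP$ of the products $\prod_{\rho\in R}B_\rho(i_\rho,j_\rho)$. By Lemma~\ref{lem:two-row-cancellation}, any pairing containing a pair that crosses the $\eta$-cut contributes zero for every $\phi$, because some factor vanishes. Only pairings in $\Pi_\eta(\cD)$ remain. Such pairings exist precisely because $n_1$ is even: $n_1$ and $n_0=n-n_1$ are then both even. For a compatible pairing, the same lemma gives
\[
B_\rho(i_\rho,j_\rho)=2(-1)^{t(p)}\eps_\rho(s(p)),
\qquad p=\phi(\rho),
\]
where $t(p)$ is the side containing $p$. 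The product over $\rho\in R$ therefore equals $2^{n/2}(-1)^{\#\{p\in\cP:\,t(p)=1\}}\prod_{\rho\in R}\eps_\rho(s(\phi(\rho)))$. The number of pairs inside $H_1$ is $n_1/2$, independent of $\cP$ and $\phi$, so the factor $2^{n/2}(-1)^{n_1/2}$ comes out of the entire sum.

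Next, identify the remaining sum $\sum_{\phi}\prod_{\rho\in R}\eps_\rho(s(\phi(\rho)))$ with $\per W^{H_0}_{\cD_\cP}$. The main point is to match the transversal $R$ with the character group $\widehat{H_0}$. Since $s(p)\in H_0=\ker\eta$, we have $\eps_{\rho+\eta}(s(p))=\eps_\rho(s(p))$. Thus $\eps_\rho$ restricted to $H_0$ depends only on the coset $\rho+\langle\eta\rangle$, which makes the choice of transversal irrelevant. Restriction gives a map $\whH/\langle\eta\rangle\to\widehat{H_0}$. It is injective: a character trivial on $H_0=\eta^\perp$ lies in $\langle\eta\rangle$, by nondegeneracy of the pairing on $H$. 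Both sides have $n/2$ elements, so the map is a bijection. Relabeling rows via $\rho\mapsto\eps_\rho|_{H_0}$ therefore turns $(\eps_\rho(s(p)))_{\rho\in R,\,p\in\cP}$ into the character matrix $W^{H_0}_{\cD_\cP}$, up to row order. Summing over bijections $\phi$ is exactly its permanent, and permanents are invariant under row permutations. One detail to check is that $W^{H_0}$ is defined with respect to an identification of $\widehat{H_0}$ with $H_0$ via some inner product. Since a permanent ignores row order, any identification gives the same value, and I would remark on this explicitly.

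The second part is bookkeeping. A pairing of $n$ labels has $n/2$ pairs, so $|\cD_\cP|=n/2$. Each index $i$ appears in exactly one pair, so $\sum_{p}s(p)=\sum_p(d_i+d_j)=\sum_i d_i$. Hence zero-sum is preserved, and $\cD_\cP$ lies in $H_0$ by compatibility. I expect the only step needing real care to be the row identification $R\leftrightarrow\widehat{H_0}$; the rest is direct substitution of earlier lemmas.
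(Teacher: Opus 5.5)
Your proposal is correct and follows essentially the same route as the paper: regroup the permanent via Lemma~\ref{lem:exact-permanent-regrouping}, discard incompatible pairings by Lemma~\ref{lem:two-row-cancellation}, factor out $2^{n/2}(-1)^{n_1/2}$, identify the transversal $R$ with $\widehat{H_0}$ by restricting characters, and finish with the pairing bookkeeping for $|\cD_{\cP}|$ and the column sum. Two of your additions merely spell out steps the paper states without detail: the injectivity argument for $\whH/\langle\eta\rangle\to\widehat{H_0}$, and the remark that the permanent does not depend on how $\widehat{H_0}$ is identified with $H_0$.
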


Before proving the identity in general, the following example displays the
entire shrink step.  It shows simultaneously what happens to the rows, what
happens to the labeled columns, and why the resulting $4\times4$ matrix is
again a prescribed-difference character matrix rather than an arbitrary block
compression.

\begin{example}[A literal $8\times8$ to $4\times4$ shrink]
\label{ex:complete-shrink}
Let
\[
    H=\F_2^3,
    \qquad
    \eta=001,
\]
so
\[
    H_0=\{000,010,100,110\},
    \qquad
    H_1=\{001,011,101,111\}.
\]
Consider the zero-sum labeled difference list
\[
    \cD=[000,000,000,010,001,101,011,101]
\]
and the compatible labeled column pairing
\[
\begin{aligned}
    p_1&=\{d_0,d_1\}=\{000,000\},
    &s(p_1)&=000,\\
    p_2&=\{d_2,d_3\}=\{000,010\},
    &s(p_2)&=010,\\
    p_3&=\{d_4,d_5\}=\{001,101\},
    &s(p_3)&=100,\\
    p_4&=\{d_6,d_7\}=\{011,101\},
    &s(p_4)&=110.
\end{aligned}
\]
Pair the rows as
\[
    R_1=\{000,001\},\quad
    R_2=\{010,011\},\quad
    R_3=\{100,101\},\quad
    R_4=\{110,111\}.
\]

Figure~\ref{fig:literal-shrink} writes the $8\times8$ matrix not as sixty-four
unrelated signs, but as a $4\times4$ array of $2\times2$ blocks.  The block
in position $(u,v)$ uses the two rows of $R_u$ and the two labeled columns of
$p_v$.

\begin{figure}[t]
\centering
\resizebox{0.96\textwidth}{!}{%
\begin{tikzpicture}[
    x=0.9cm,
    y=0.9cm,
    block/.style={draw,minimum width=1.45cm,minimum height=1.0cm,inner sep=1pt},
    smalllabel/.style={font=\scriptsize},
    rowlabel/.style={font=\small},
    collabel/.style={font=\small},
    titlelabel/.style={font=\small},
    >=latex
]


\node[titlelabel] at (3.25,5.75)
{$W_{\cD}$ arranged as a $4\times4$ array of $2\times2$ blocks};

\node[smalllabel] at (3.25,5.43) {column pairs};
\node[collabel] at (0.8,5.05) {$p_1$};
\node[collabel] at (2.45,5.05) {$p_2$};
\node[collabel] at (4.10,5.05) {$p_3$};
\node[collabel] at (5.75,5.05) {$p_4$};

\node[smalllabel,rotate=90] at (-1.48,2.48) {row pairs};
\node[rowlabel] at (-0.82,4.35) {$R_1$};
\node[rowlabel] at (-0.82,3.10) {$R_2$};
\node[rowlabel] at (-0.82,1.85) {$R_3$};
\node[rowlabel] at (-0.82,0.60) {$R_4$};

\node[block] at (0.8,4.35) {$\begin{smallmatrix}1&1\\ 1&1\end{smallmatrix}$};
\node[block] at (2.45,4.35) {$\begin{smallmatrix}1&1\\ 1&1\end{smallmatrix}$};
\node[block] at (4.10,4.35) {$\begin{smallmatrix}1&1\\ -1&-1\end{smallmatrix}$};
\node[block] at (5.75,4.35) {$\begin{smallmatrix}1&1\\ -1&-1\end{smallmatrix}$};

\node[block] at (0.8,3.10) {$\begin{smallmatrix}1&1\\ 1&1\end{smallmatrix}$};
\node[block] at (2.45,3.10) {$\begin{smallmatrix}1&-1\\ 1&-1\end{smallmatrix}$};
\node[block] at (4.10,3.10) {$\begin{smallmatrix}1&1\\ -1&-1\end{smallmatrix}$};
\node[block] at (5.75,3.10) {$\begin{smallmatrix}-1&1\\ 1&-1\end{smallmatrix}$};

\node[block] at (0.8,1.85) {$\begin{smallmatrix}1&1\\ 1&1\end{smallmatrix}$};
\node[block] at (2.45,1.85) {$\begin{smallmatrix}1&1\\ 1&1\end{smallmatrix}$};
\node[block] at (4.10,1.85) {$\begin{smallmatrix}1&-1\\ -1&1\end{smallmatrix}$};
\node[block] at (5.75,1.85) {$\begin{smallmatrix}1&-1\\ -1&1\end{smallmatrix}$};

\node[block] at (0.8,0.60) {$\begin{smallmatrix}1&1\\ 1&1\end{smallmatrix}$};
\node[block] at (2.45,0.60) {$\begin{smallmatrix}1&-1\\ 1&-1\end{smallmatrix}$};
\node[block] at (4.10,0.60) {$\begin{smallmatrix}1&-1\\ -1&1\end{smallmatrix}$};
\node[block] at (5.75,0.60) {$\begin{smallmatrix}-1&-1\\ 1&1\end{smallmatrix}$};

\draw[line width=0.8pt] (-0.10,4.88) -- (-0.25,4.88) -- (-0.25,0.07) -- (-0.10,0.07);
\draw[line width=0.8pt] (6.65,4.88) -- (6.80,4.88) -- (6.80,0.07) -- (6.65,0.07);

\node[smalllabel,align=center] at (3.25,-0.42)
{$8$ rows $\longrightarrow 4$ row pairs};
\node[smalllabel,align=center] at (3.25,-0.82)
{$8$ labeled columns $\longrightarrow 4$ column pairs};

\draw[->,line width=0.9pt] (7.65,2.40) -- (9.45,2.40);
\node[smalllabel,align=center] at (8.55,2.78)
{recursive shrink \\ step};
\node[smalllabel,align=center] at (8.55,2.01)
{take each $2\times2$ block\\ permanent};


\node[titlelabel] at (13.65,5.75) {$\cB_{\cP}$};

\node[smalllabel] at (13.65,5.43) {column pairs};
\node[collabel] at (12.00,5.05) {$p_1$};
\node[collabel] at (13.10,5.05) {$p_2$};
\node[collabel] at (14.20,5.05) {$p_3$};
\node[collabel] at (15.30,5.05) {$p_4$};

\node[smalllabel,rotate=90] at (10.75,2.48) {row pairs};
\node[rowlabel] at (11.22,4.35) {$R_1$};
\node[rowlabel] at (11.22,3.10) {$R_2$};
\node[rowlabel] at (11.22,1.85) {$R_3$};
\node[rowlabel] at (11.22,0.60) {$R_4$};

\draw[line width=0.8pt] (11.45,4.88) rectangle (15.85,0.07);
\foreach \x in {12.55,13.65,14.75}
    \draw[line width=0.8pt] (\x,4.88)--(\x,0.07);
\foreach \y in {3.675,2.4725,1.27}
    \draw[line width=0.8pt] (11.45,\y)--(15.85,\y);

\node at (12.00,4.28) {$2$};
\node at (13.10,4.28) {$2$};
\node at (14.20,4.28) {$-2$};
\node at (15.30,4.28) {$-2$};

\node at (12.00,3.08) {$2$};
\node at (13.10,3.08) {$-2$};
\node at (14.20,3.08) {$-2$};
\node at (15.30,3.08) {$2$};

\node at (12.00,1.87) {$2$};
\node at (13.10,1.87) {$2$};
\node at (14.20,1.87) {$2$};
\node at (15.30,1.87) {$2$};

\node at (12.00,0.67) {$2$};
\node at (13.10,0.67) {$-2$};
\node at (14.20,0.67) {$2$};
\node at (15.30,0.67) {$-2$};

\node[smalllabel,align=center] at (13.65,-0.42)
{one entry for each pair $(R_u,p_v)$};
\node[smalllabel,align=center] at (13.65,-0.82)
{same permanent problem on the smaller space $H_0$};

\end{tikzpicture}%
}
\caption{The recursive shrink step for one fixed compatible labeled column
pairing $\cP$.  The $8\times8$ character matrix is displayed as a $4\times4$
array of $2\times2$ blocks, with rows indexed by the row pairs $R_u$ and block
columns indexed by the column pairs $p_v$.  Replacing every $2\times2$ block by
its permanent produces the compressed $4\times4$ matrix $\cB_{\cP}$.  After
extracting the common column factors, this matrix is exactly the
prescribed-difference character matrix $W^{H_0}_{\cD_{\cP}}$ on the smaller
space $H_0=\ker\eta$.  Thus the shrink step is the inductive step: the original
permanent problem is replaced by a smaller permanent problem of the same form.}
\label{fig:literal-shrink}
\end{figure}

The figure also makes the row and column conditions transparent.  A permanent
term of the original matrix uses every one of the eight rows and every one of
the eight labeled columns exactly once.  Once the row pairing and the fixed
column pairing $\cP$ are imposed, this global bijection has two levels.

First, at the coarse level, every row pair $R_u$ must be matched with exactly
one column pair $p_v$, and every column pair must be used exactly once.  Thus
the coarse choices form a permutation of four objects; equivalently, they are
exactly the terms of $\per\cB_{\cP}$.  Second, inside every chosen
$2\times2$ block there are exactly two ways to assign its two labeled columns
to its two rows.  Their sum is the permanent of that block.  Therefore the
contribution of the fixed pairing $\cP$ to $\per W_{\cD}$ is exactly
\[
    \per\cB_{\cP}.
\]
The row and column conditions have not disappeared in the shrink step: they have
descended from individual rows and columns to row pairs and column pairs.

For example,
\[
\per
\begin{pmatrix}
1&1\\
-1&-1
\end{pmatrix}
=-2.
\]
This is a surviving block.  Its column pair lies entirely in $H_1$, and after
the common factor $-2$ is removed the block records the character value at the
compressed sum of the two columns.

By contrast, suppose one paired a column $d_i\in H_0$ with a column
$d_j\in H_1$.  In the first row pair one obtains, for example,
\[
\per
\begin{pmatrix}
1&1\\
1&-1
\end{pmatrix}
=0.
\]
The same cancellation occurs in every row pair.  Hence the entire would-be
block column is zero: an incompatible column pair is annihilated before it can
become a column of the smaller permanent.

We now identify the smaller matrix.  For a compatible pair
$p=\{i,j\}$, its two differences lie on the same side of the $\eta$-cut, so
\[
    s(p)=d_i+d_j\in H_0.
\]
Thus each block column $p_v$ becomes one labeled column indexed by the
compressed difference $s(p_v)$.  In the present example,
\[
    \cD_{\cP}=[000,010,100,110].
\]
Even if two different column pairs had the same sum, they would remain two
distinct labeled compressed columns, so the column condition of the permanent
would still be preserved.

On the row side, $\rho$ and $\rho+\eta$ agree on $H_0=\ker\eta$.  Hence each
row pair $R_u$ has one common restricted character on $H_0$.  The four row
pairs give the four distinct characters of $H_0$, exactly once.  Therefore,
after extracting the block-column factors
\[
    2,\quad2,\quad-2,\quad-2,
\]
we obtain
\[
\cB_{\cP}
=
\underbrace{
\begin{pmatrix}
 1& 1& 1& 1\\
 1&-1& 1&-1\\
 1& 1&-1&-1\\
 1&-1&-1& 1
\end{pmatrix}}_{\displaystyle W^{H_0}_{\cD_{\cP}}}
\begin{pmatrix}
2&0&0&0\\
0&2&0&0\\
0&0&-2&0\\
0&0&0&-2
\end{pmatrix}.
\]
This displays both halves of the compression:
\[
\boxed{
\begin{array}{c}
\{\rho,\rho+\eta\}
\longmapsto
\rho|_{H_0},
\\[1mm]
\{d_i,d_j\}
\longmapsto
d_i+d_j\in H_0.
\end{array}}
\]
The first map sends the four row pairs to all four character rows of the
smaller space; the second sends the four labeled column pairs to the four
labeled compressed columns.  The compressed list also preserves the zero-sum
condition:
\[
    \sum_{v=1}^{4}s(p_v)
    =
    \sum_{i=0}^{7}d_i
    =
    0.
\]
Thus the smaller matrix belongs to the same zero-sum
prescribed-difference character family as the original matrix.  Numerically,
\[
    \per W^{H_0}_{\cD_{\cP}}=8,
\]
and this fixed pairing contributes
\[
    \per\cB_{\cP}
    =
    2^4(-1)^2\cdot8
    =
    128.
\]
The full recursion sums the corresponding smaller permanents over all
compatible labeled column pairings $\cP$.  The example therefore shows the
inductive mechanism in concrete form: order $8$ has been reduced to the same
permanent problem at order $4$.
\end{example}

The example shows the form of the recursive reduction.  The proof below
formalizes the same two-level bookkeeping for arbitrary $n$.  The only global
sign comes from the compatible column pairs lying in $H_1$; there are exactly
$n_1/2$ such pairs.

\begin{proof}
By Lemma~\ref{lem:exact-permanent-regrouping}, the permanent is a sum over all
column pairings.  If $\cP$ is not $\eta$-compatible, then it contains a pair
\[
    \{i,j\}
\]
with $d_i$ and $d_j$ on opposite sides of the character cut.  By
Lemma~\ref{lem:two-row-cancellation},
\[
    B_\rho(i,j)=0
\]
for every $\rho\in R$.  Hence the contribution of $\cP$ is zero.

Fix
\[
    \cP\in\Pi_\eta(\cD).
\]
For
\[
    p=\{i,j\}\in\cP,
\]
let
\[
    t(p)=
    \begin{cases}
        0,&d_i,d_j\in H_0,\\
        1,&d_i,d_j\in H_1.
    \end{cases}
\]
Lemma~\ref{lem:two-row-cancellation} gives
\[
    B_\rho(i,j)
    =
    2(-1)^{t(p)}\eps_\rho(s(p)).
\]
Therefore the contribution of $\cP$ in
Lemma~\ref{lem:exact-permanent-regrouping} is
\[
2^{n/2}
\left(
    \prod_{p\in\cP}(-1)^{t(p)}
\right)
\sum_{\substack{\phi:R\to\cP\\ \phi\text{ bijective}}}
\prod_{\rho\in R}
\eps_\rho\bigl(s(\phi(\rho))\bigr).
\]
Exactly $n_1/2$ pairs of $\cP$ lie in $H_1$.  Hence
\[
    \prod_{p\in\cP}(-1)^{t(p)}
    =
    (-1)^{n_1/2}.
\]

For $x\in H_0$,
\[
    \eps_{\rho+\eta}(x)=\eps_\rho(x).
\]
Thus restriction of characters identifies
\[
    \whH/\langle\eta\rangle
    \cong
    \widehat{H_0}.
\]
Since $R$ contains one representative from each pair
$\{\rho,\rho+\eta\}$, the restrictions of the characters indexed by $R$ are
exactly all characters of $H_0$, once each.  Consequently,
\[
\sum_{\substack{\phi:R\to\cP\\ \phi\text{ bijective}}}
\prod_{\rho\in R}
\eps_\rho\bigl(s(\phi(\rho))\bigr)
=
\per W^{H_0}_{\cD_{\cP}}.
\]
The contribution of $\cP$ is therefore
\[
    2^{n/2}(-1)^{n_1/2}
    \per W^{H_0}_{\cD_{\cP}}.
\]
Summing over all compatible pairings gives
\[
    \per W_{\cD}
    =
    2^{n/2}(-1)^{n_1/2}
    \sum_{\cP\in\Pi_\eta(\cD)}
    \per W^{H_0}_{\cD_{\cP}}.
\]

Finally, $\cP$ contains exactly $n/2$ pairs, so
\[
    |\cD_{\cP}|=\frac n2.
\]
Also,
\[
    \sum_{p\in\cP}s(p)
    =
    \sum_{\{i,j\}\in\cP}(d_i+d_j)
    =
    \sum_{i=0}^{n-1}d_i,
\]
because every labeled index occurs in exactly one pair.
\end{proof}

Lemma~\ref{lem:compatible-pairing-reduction} is the promised dimension-halving
step.  A compatible row pair becomes one character of $H_0$, a compatible
labeled column pair becomes one labeled difference in $H_0$, and the total
sum of the compressed differences is unchanged.  We may therefore apply the
same statement again on the smaller space.

The recursion is now strong enough to determine the exact valuation.  The
remaining issue is cancellation between the different compatible column
pairings.  For a zero-sum list, both sides of every character cut contain an
even number of labeled columns, so the number of compatible pairings is odd.
Inductively, every smaller permanent has the same exact power of $2$ and an
odd normalized part.  An odd number of odd normalized contributions cannot
cancel modulo $2$.  This is the parity mechanism behind the next lemma.

\begin{lemma}[Full character-matrix permanent valuation]
\label{lem:full-character-permanent}
Let
\[
    H\cong\F_2^r,
    \qquad
    n=|H|,
\]
and let
\[
    \cD=[d_0,d_1,\ldots,d_{n-1}]
\]
be a prescribed-difference matrix over $H$, where zero differences are
allowed.  If
\[
    \sum_{i=0}^{n-1}d_i=0,
\]
then
\[
    \vTwo(\per W_{\cD})=n-1.
\]
In particular,
\[
    \per W_{\cD}\neq0.
\]
\end{lemma}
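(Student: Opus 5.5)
We argue by induction on $r=\dim H$, using the permanent recursion of Lemma~\ref{lem:compatible-pairing-reduction} as the inductive step. For the base case $r=0$ we have $n=1$ and $H=\{0\}$. The zero-sum condition forces $d_0=0$, so $W_{\cD}=(1)$ and $\vTwo(\per W_{\cD})=0=n-1$. (One may also check $r=1$ directly: $d_0=d_1$, both columns are equal, the permanent is $2$ if $d_0=0$ and $-2$ otherwise, and the valuation is $1=n-1$.)

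For the inductive step, let $r\geq1$ and fix any nonzero $\eta\in\whH$. Since $\sum_i d_i=0$, pairing with $\eta$ gives
\[
\sum_i\langle\eta,d_i\rangle=0,
\]
so $n_1$ is even, and therefore $n_0=n-n_1$ is even as well. Lemma~\ref{lem:compatible-pairing-reduction} then applies:
\[
\per W_{\cD}=2^{n/2}(-1)^{n_1/2}\sum_{\cP\in\Pi_\eta(\cD)}\per W^{H_0}_{\cD_{\cP}}.
\]
The same lemma shows that each $\cD_{\cP}$ is a zero-sum list of $n/2$ labeled differences in $H_0\cong\F_2^{r-1}$. The induction hypothesis gives $\per W^{H_0}_{\cD_{\cP}}=2^{n/2-1}u_{\cP}$ with $u_{\cP}$ odd, for every compatible pairing $\cP$.

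It remains to count compatible pairings modulo $2$. An $\eta$-compatible pairing is a perfect matching of the $n_0$ labeled indices on the $H_0$ side together with a perfect matching of the $n_1$ labeled indices on the $H_1$ side. Hence
\[
|\Pi_\eta(\cD)|=(n_0-1)!!\,(n_1-1)!!,
\]
a product of odd numbers, so it is odd; this includes the degenerate case where one side is empty and contributes $1$. The sum $\sum_{\cP}u_{\cP}$ is thus a sum of an odd number of odd integers, hence odd. Therefore
\[
\per W_{\cD}=\pm 2^{n/2}\cdot 2^{n/2-1}\cdot(\text{odd})=\pm2^{n-1}\cdot(\text{odd}),
\]
which gives $\vTwo(\per W_{\cD})=n-1$ and in particular $\per W_{\cD}\neq0$.

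The main point needing care is that the induction hypothesis must cover lists with zero differences and arbitrary repetition; the statement explicitly allows both, so the hypothesis applies verbatim to every compressed list $\cD_{\cP}$. A second point is that the parity count must use labeled indices rather than difference values. Because Lemma~\ref{lem:compatible-pairing-reduction} sums over labeled pairings, $(n_0-1)!!\,(n_1-1)!!$ is the correct count. Beyond these checks the argument is bookkeeping, and I expect no further obstacle.
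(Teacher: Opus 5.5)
Your proposal is correct and follows the paper's proof essentially step for step: induction on $r$ from the base case $r=0$, the parity of $n_1$ for a fixed nonzero $\eta$, the permanent recursion of Lemma~\ref{lem:compatible-pairing-reduction}, the odd count $(n_0-1)!!\,(n_1-1)!!$ of compatible labeled pairings, and the observation that a sum of an odd number of odd normalized terms cannot cancel. Your extra check of the case $r=1$ is also correct.
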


\begin{proof}
We argue by induction on $r$.

For $r=0$, the space $H$ has one element and
\[
    n=1.
\]
The only zero-sum prescribed-difference matrix is
\[
    \cD=[0],
\]
and
\[
    W_{\cD}=[1].
\]
Hence
\[
    \vTwo(\per W_{\cD})=0=n-1.
\]

Assume $r\geq1$ and that the statement holds in dimension $r-1$.  Fix a
nonzero
\[
    \eta\in\whH.
\]
Since
\[
    \sum_{i=0}^{n-1}d_i=0,
\]
we have
\[
    n_1
    \equiv
    \left\langle
        \eta,
        \sum_{i=0}^{n-1}d_i
    \right\rangle
    \equiv0\pmod2.
\]
Thus Lemma~\ref{lem:compatible-pairing-reduction} gives
\[
    \per W_{\cD}
    =
    2^{n/2}(-1)^{n_1/2}
    \sum_{\cP\in\Pi_\eta(\cD)}
    \per W^{H_0}_{\cD_{\cP}}.
\]

For every
\[
    \cP\in\Pi_\eta(\cD),
\]
the compressed matrix $\cD_{\cP}$ has
\[
    \frac n2=|H_0|
\]
columns and is zero-sum in $H_0$.  By the induction hypothesis,
\[
    \vTwo\!\left(
        \per W^{H_0}_{\cD_{\cP}}
    \right)
    =
    \frac n2-1.
\]
Write
\[
    \per W^{H_0}_{\cD_{\cP}}
    =
    2^{n/2-1}u_{\cP},
\]
where $u_{\cP}$ is odd.

Since $n_1$ is even and $n$ is even,
\[
    n_0=n-n_1
\]
is also even.  The number of compatible pairings is
\[
    |\Pi_\eta(\cD)|
    =
    (n_0-1)!!(n_1-1)!!,
\]
with
\[
    (-1)!!=1.
\]
Both factors are odd.  Hence
\[
    |\Pi_\eta(\cD)|
\]
is odd.

Therefore
\[
    \sum_{\cP\in\Pi_\eta(\cD)}u_{\cP}
\]
is a sum of an odd number of odd integers and is itself odd.  Substituting in
the recursion gives
\[
    \per W_{\cD}
    =
    2^{n-1}(-1)^{n_1/2}
    \sum_{\cP\in\Pi_\eta(\cD)}u_{\cP},
\]
where the final sum is odd.  Thus
\[
    \vTwo(\per W_{\cD})=n-1.
\]
\end{proof}

The zero-sum hypothesis is exactly what closes the recursion: it makes the
character cut even on both sides and is preserved by every compatible
compression
\[
    \cD\longmapsto\cD_{\cP}.
\]

We now return from the repeated-column permanent to the determinant
coefficient that encodes Hall realizations.  The correspondence
$\mathbf m!\,b(\mathbf m)=\per W_{\cD}$ converts the exact permanent valuation
into the desired multinomial valuation.  The nonzero-sum case is handled
separately by a character-shift involution, which gives the algebraic
counterpart of the obvious zero-sum obstruction for matchings.

For a nonnegative integer $a$, let $s_2(a)$ denote the number of ones in the
binary expansion of $a$.

\begin{lemma}[Full character coefficient lemma]
\label{lem:full-character-coeff}
Let
\[
    |\mathbf m|=n
\]
and let
\[
    b(\mathbf m)=[z^{\mathbf m}]P_H(z).
\]
Then
\begin{enumerate}[label=\textup{(\roman*)}]
    \item if $\sigma(\mathbf m)\neq0$, then
    \[
        b(\mathbf m)=0;
    \]

    \item if $\sigma(\mathbf m)=0$, then
    \[
        \vTwo(b(\mathbf m))
        =
        \vTwo\!\left(\frac{n!}{\mathbf m!}\right).
    \]
\end{enumerate}
\end{lemma}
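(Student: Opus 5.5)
Part~(ii) comes straight from the two results just proved, and part~(i) comes from a character-shift involution.

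For part~(ii), let $\sigma(\mathbf m)=0$ and form any prescribed-difference matrix $\cD$ whose multiplicity vector is $\mathbf m$. Such a $\cD$ exists because $|\mathbf m|=n$, and it is zero-sum because $\sum_i d_i=\sigma(\mathbf m)=0$. Lemma~\ref{lem:coeff-permanent} gives $\mathbf m!\,b(\mathbf m)=\per W_{\cD}$, and Lemma~\ref{lem:full-character-permanent} gives $\vTwo(\per W_{\cD})=n-1$. Hence $b(\mathbf m)\neq0$ and
\[
    \vTwo(b(\mathbf m))=n-1-\vTwo(\mathbf m!).
\]
It remains to match this with $\vTwo(n!/\mathbf m!)=\vTwo(n!)-\vTwo(\mathbf m!)$. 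Since $n=2^r$, Legendre's formula gives $\vTwo(n!)=n-s_2(n)=n-1$. This is the only place the hypothesis that $n$ is a power of two is used.

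For part~(i), suppose $\sigma(\mathbf m)\neq0$. I would exploit the symmetry of $P_H(z)$ under the substitution $z_x\mapsto\eps_\tau(x)z_x$ for a fixed $\tau\in\whH$. This substitution sends $L_\rho(z)$ to $L_{\rho+\tau}(z)$, because $\eps_\rho(x)\eps_\tau(x)=\eps_{\rho+\tau}(x)$. Since $\rho\mapsto\rho+\tau$ only permutes the factors, $P_H$ is invariant. On the other hand, the monomial $z^{\mathbf m}$ is multiplied by
\[
    \prod_x\eps_\tau(x)^{m_x}=\eps_\tau(\sigma(\mathbf m)).
\]
Comparing coefficients gives $b(\mathbf m)=\eps_\tau(\sigma(\mathbf m))\,b(\mathbf m)$. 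Choose $\tau$ with $\langle\tau,\sigma(\mathbf m)\rangle=1$, which is possible since $\sigma(\mathbf m)\neq0$. Then $b(\mathbf m)=-b(\mathbf m)$, so $b(\mathbf m)=0$.

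I see no real obstacle. The one point needing care is the bookkeeping in part~(ii): the factor $\mathbf m!$ from the labeled columns must be subtracted, and the identity $\vTwo(n!)=n-1$ for $n=2^r$ must be stated explicitly. The case $r=0$, $n=1$ is consistent, since $\vTwo(1!)=0$.
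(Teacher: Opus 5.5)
Your proposal is correct and matches the paper's proof: part (ii) is exactly the paper's combination of Lemma~\ref{lem:coeff-permanent}, Lemma~\ref{lem:full-character-permanent}, and Legendre's formula $\vTwo(n!)=n-1$ for $n=2^r$. For part (i), the paper uses a sign-reversing involution $x_\rho\mapsto x_{\rho+\eta}$ on the individual expansion terms, while you argue that $P_H$ is invariant under $z_x\mapsto\eps_\tau(x)z_x$. These are the same character-shift mechanism at the term level and at the polynomial level (the paper itself uses your polynomial-level form in Lemma~\ref{lem:character-shift}), so nothing is missing.
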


\begin{proof}
Suppose first that
\[
    S=\sigma(\mathbf m)\neq0.
\]
Choose
\[
    \eta\in\whH
\]
such that
\[
    \langle\eta,S\rangle=1.
\]

A contribution to
\[
    [z^{\mathbf m}]P_H(z)
\]
is specified by choosing, for every $\rho\in\whH$, a value
\[
    x_\rho\in H
\]
such that each $x\in H$ occurs exactly $m_x$ times.  Its sign is
\[
    \prod_{\rho\in\whH}\eps_\rho(x_\rho).
\]
Apply the involution
\[
    x_\rho\longmapsto x_{\rho+\eta}.
\]
The multiplicities are unchanged, while the new sign is
\begin{align*}
\prod_{\rho\in\whH}\eps_\rho(x_{\rho+\eta})
&=
\prod_{\rho\in\whH}\eps_{\rho+\eta}(x_\rho)\\
&=
\left(
\prod_{\rho\in\whH}\eps_\rho(x_\rho)
\right)
\left(
\prod_{\rho\in\whH}\eps_\eta(x_\rho)
\right).
\end{align*}
Since the value $x$ occurs $m_x$ times,
\[
    \prod_{\rho\in\whH}\eps_\eta(x_\rho)
    =
    \prod_{x\in H}\eps_\eta(x)^{m_x}
    =
    (-1)^{\langle\eta,S\rangle}
    =
    -1.
\]
Thus the involution pairs every contribution with one of opposite sign.  Hence
\[
    b(\mathbf m)=0.
\]

Now suppose
\[
    \sigma(\mathbf m)=0.
\]
Choose any prescribed-difference matrix
\[
    \cD=[d_0,\ldots,d_{n-1}]
\]
with multiplicity vector $\mathbf m$.  Then
\[
    \sum_{i=0}^{n-1}d_i
    =
    \sigma(\mathbf m)
    =
    0.
\]
By Lemma~\ref{lem:coeff-permanent} and
Lemma~\ref{lem:full-character-permanent},
\[
    \vTwo(\mathbf m!)
    +
    \vTwo(b(\mathbf m))
    =
    n-1.
\]
Since
\[
    n=2^r,
\]
Legendre's formula gives
\[
    \vTwo(n!)
    =
    n-s_2(n)
    =
    n-1.
\]
Therefore
\[
    \vTwo(b(\mathbf m))
    =
    n-1-\vTwo(\mathbf m!)
    =
    \vTwo\!\left(\frac{n!}{\mathbf m!}\right).
\]
\end{proof}

\begin{proof}[Proof of Theorem~\ref{thm:full-coefficient}]
By Lemma~\ref{lem:character-diagonalization},
\[
    \det C(z)=P_H(z).
\]
The result is therefore exactly
Lemma~\ref{lem:full-character-coeff}.
\end{proof}

\begin{proof}[Proof of Theorem~\ref{thm:binary-hall}]
Let $\mathbf m$ be the multiplicity vector of $\cD$.  Since $\cD$ is
zero-sum,
\[
    \sigma(\mathbf m)=0.
\]
Theorem~\ref{thm:full-coefficient} gives
\[
    [z^{\mathbf m}]\det C(z)\neq0.
\]
By Lemma~\ref{lem:circulant-certificate}, there exists a permutation
\[
    \pi:H\to H
\]
whose realized difference multiplicities are exactly $\mathbf m$.  Hence
$\cD$ has a realization.
\end{proof}

\subsection{Back to crossing matchings}

We now state the consequence of the Binary Hall theorem in the original
prescribed-difference formulation.  The same hyperplane case is already known
from the Hall-based batch-code argument of \cite{HollmannEtAl2023}.

\begin{corollary}[Hyperplane prescribed-difference matching]
\label{cor:hyperplane-matching}
Let $H\leq\F_2^s$ be a hyperplane and let
\[
    \alpha\in\F_2^s\setminus H.
\]
Suppose
\[
    M=[v_0,\ldots,v_{n-1}],
    \qquad
    v_i\in H+\alpha,
    \qquad
    n=|H|,
\]
and
\[
    \sum_{i=0}^{n-1}v_i=0.
\]
Then $M$ has a Hadamard pair solution.
\end{corollary}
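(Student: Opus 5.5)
The plan is to reduce the corollary directly to Theorem~\ref{thm:binary-hall} by passing to reduced differences, as in the setup of Section~\ref{sec:hall}. For each $i$, write $v_i=\alpha+d_i$ with $d_i=v_i+\alpha\in H$; this is possible and unique because $v_i\in H+\alpha$. Collect these values in $\cD=[d_0,\ldots,d_{n-1}]$. The first step is to check the zero-sum hypothesis for $\cD$:
\[
    \sum_{i=0}^{n-1}d_i=\sum_{i=0}^{n-1}v_i+n\alpha=n\alpha=0,
\]
since $n=|H|=2^{s-1}$ is even when $s\geq2$. In the degenerate case $s=1$ we have $n=1$, $H=\{0\}$, and $M=[\alpha]$; the single pair $\{0,\alpha\}$ already solves it.

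By Theorem~\ref{thm:binary-hall}, $\cD$ then has a realization: a permutation $\pi:H\to H$ whose multiset $\{a+\pi(a):a\in H\}$ equals the column multiset of $\cD$. The second step is to turn this back into a matching. Consider the pairs $\{a,\alpha+\pi(a)\}$ for $a\in H$.
\begin{enumerate}[label=\textup{(\alph*)}]
    \item The left endpoints exhaust $H$.
    \item The right endpoints exhaust $H+\alpha$, because $\pi$ is a bijection.
    \item Each pair has difference $\alpha+(a+\pi(a))$.
\end{enumerate}
By (a) and (b), these $n$ pairs partition $\F_2^s=H\sqcup(H+\alpha)$.

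The last step is bookkeeping of labels. Because the realized multiset of the $a+\pi(a)$ equals that of the $d_i$, there is a bijection from $H$ to $[n]$ with $a+\pi(a)=d_{i(a)}$. For that $a$, the pair $\{a,\alpha+\pi(a)\}$ has difference $\alpha+d_{i(a)}=v_{i(a)}$. Ordering the pairs by $i$ gives $v_i=h_{2i}+h_{2i+1}$, which is a Hadamard pair solution in the sense of Definition~\ref{def:hadamard-solution}.

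I do not expect a genuine obstacle. The only points needing care are the parity computation $n\alpha=0$, which requires $n$ to be even, and matching the labeled requests with repeated values. The latter is handled exactly by the multiset equality in the definition of a realization.
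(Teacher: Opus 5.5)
Your proposal is correct and follows the paper's proof exactly: pass to $d_i=v_i+\alpha$, use that $n$ is even to get $\sum d_i=0$, apply Theorem~\ref{thm:binary-hall}, and translate the realizing permutation back into crossing pairs, where you spell out the equivalence that the paper cites from the start of Section~\ref{sec:hall}. Your $s=1$ aside is harmless but unnecessary, since there $\sum v_i=\alpha\neq0$ means the hypothesis is never satisfied, and the paper assumes $s\geq2$ throughout anyway.
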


\begin{proof}
Let
\[
    \cD=[d_0,\ldots,d_{n-1}],
    \qquad
    d_i=v_i+\alpha\in H,
\]
be the prescribed-difference matrix associated with $M$.

Since $n$ is even,
\[
    \sum_{i=0}^{n-1}d_i
    =
    \sum_{i=0}^{n-1}(v_i+\alpha)
    =
    \sum_{i=0}^{n-1}v_i
    =
    0.
\]
By Theorem~\ref{thm:binary-hall}, $\cD$ has a realization.  By the equivalence
established at the beginning of this section, the corresponding crossing pairs
between $H$ and $H+\alpha$ form a Hadamard pair solution for $M$.
\end{proof}

The Hall case is therefore encoded by the full group circulant.  The
advantage of the determinant formulation is that deleting vertices does not
require a new matching polynomial: it simply replaces the full circulant by a
minor of the same matrix.  This is the mechanism used in the two-hole theorem.

\section{The Two-Hole Theorem}
\label{sec:two-hole}

Fix $H\leq\F_2^s$ and $\alpha\notin H$.  We now pass from the full Hall
permutation to the first punctured case.  The similarity with Hall is
substantial: $n-2$ requests are still crossing, so after translating
$H+\alpha$ by $\alpha$ the residual problem is still a prescribed-difference
bijection between two copies of $H$.  The essential difference is that these
copies are no longer complete.  The two internal requests must consume two
vertices on each affine side, and their locations are not prescribed.

The terminal obstruction described in Section~\ref{sec:prelim} is therefore
not a failure to realize crossing requests locally.  The missing
ingredient is simultaneous control of the two prescribed internal differences
at the final boundary.  The character-minor method addresses exactly this
point.  The Hall circulant remains the correct ambient matrix, but the relevant
object is now an unknown complementary minor rather than the full determinant.
We shall prove that some locations of the four deleted vertices, with the two
prescribed internal differences, always produce a nonzero coefficient.

\begin{theorem}[Two-hole theorem]
\label{thm:two-hole}
Let $s\geq2$, and let $M$ be a zero-sum two-hole instance with respect to a
hyperplane $H\leq\F_2^s$.  Then $M$ has a Hadamard pair solution.
\end{theorem}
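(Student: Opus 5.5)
Write the crossing requests as $v_i=\alpha+d_i$ with $d_i\in H$ for $i\in[n-2]$, and the two internal requests as $p,q\in H$. The zero-sum condition gives $\sum_i d_i+p+q=0$, because $n-2$ is even.

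\emph{Reduction to a minor.} A solution places an edge $\{a,a+p\}$ in $H$ and an edge $\{\alpha+b,\alpha+b+q\}$ in $H+\alpha$. The roles of $p$ and $q$ may be swapped, but fixing this orientation suffices. Put $A=\{a,a+p\}$ and $B=\{b,b+q\}$. We then need a bijection $\pi:H\setminus A\to H\setminus B$ whose difference multiset $\{x+\pi(x)\}$ equals $\mathbf m'$, the multiplicity vector of $d_0,\dots,d_{n-3}$.

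Exactly as in Lemma~\ref{lem:circulant-certificate}, it suffices to show that $[z^{\mathbf m'}]\det C(z)[H\setminus A,H\setminus B]\neq0$ for some choice of $a,b$. Here $C[\cdot,\cdot]$ denotes the submatrix with the indicated rows and columns.

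\emph{Computing the minor.} By Jacobi's complementary-minor identity,
\[
\det C[H\setminus A,H\setminus B]=\pm\det C\cdot\det(C^{-1})[B,A].
\]
By Lemma~\ref{lem:character-diagonalization}, $C^{-1}=\frac1n W^{\mathsf T}\Lambda^{-1}W$. Cauchy--Binet then writes $\det(C^{-1})[B,A]$ as a sum over pairs of characters $\{\rho,\rho'\}$ of $1/(L_\rho L_{\rho'})$ times $2\times2$ character minors in $a,b$. Multiplying by $\det C=P_H$ gives
\[
\det C[H\setminus A,H\setminus B]=\pm\frac1{n^2}\sum_{\{\rho,\rho'\}}\Delta_{\rho\rho'}(a,b)\prod_{\tau\neq\rho,\rho'}L_\tau(z).
\]
Each $2\times2$ minor $\Delta_{\rho\rho'}$ is a product of determinants $\det\begin{pmatrix}\eps_\rho(a)&\eps_\rho(a+p)\\ \eps_{\rho'}(a)&\eps_{\rho'}(a+p)\end{pmatrix}$. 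It vanishes unless $\eps_\rho(p)\neq\eps_{\rho'}(p)$ and $\eps_\rho(q)\neq\eps_{\rho'}(q)$, and otherwise it equals $\pm4\,\eps_{\rho+\rho'}(a+b)$ up to an explicit sign.

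So define $F(c)=[z^{\mathbf m'}]\det C[H\setminus A,H\setminus B]$ for $c=a+b$. This is, up to constants, the Walsh transform in $c$ of the function
\[
g(\rho+\rho')=\sum_{\{\rho,\rho'\}}(\text{sign})\cdot[z^{\mathbf m'}]\prod_{\tau\neq\rho,\rho'}L_\tau,
\]
where the sum runs over pairs whose sum $\eta=\rho+\rho'$ satisfies $\langle\eta,p\rangle=\langle\eta,q\rangle=1$. Since the Walsh transform is invertible, some $c$ has $F(c)\neq0$ provided $g(\eta)\neq0$ for at least one admissible $\eta$. Such an $\eta$ exists because $p,q\neq0$. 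If $p=q$, take any $\eta$ with $\langle\eta,p\rangle=1$. If $p\neq q$, choose $\eta$ with $\langle\eta,p\rangle=\langle\eta,q\rangle=1$.

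\emph{Main obstacle.} The key step is exact $2$-adic nonvanishing of the twice-deleted coefficient
\[
g(\eta)=\sum_{\rho}\pm[z^{\mathbf m'}]\prod_{\tau\notin\{\rho,\rho+\eta\}}L_\tau.
\]
I would imitate Lemmas~\ref{lem:coeff-permanent}--\ref{lem:full-character-permanent}. First write $\mathbf m'!\,[z^{\mathbf m'}]\prod_{\tau\neq\rho,\rho+\eta}L_\tau$ as the permanent of $W_{\cD'}$ with the row pair $\{\rho,\rho+\eta\}$ deleted. Then pair the remaining rows by the same involution $\tau\mapsto\tau+\eta$; the deleted rows form exactly one such pair, so every other row pair survives. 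Lemma~\ref{lem:two-row-cancellation} then forces $\eta$-compatible column pairings of the $n-2$ columns. The parity condition needed for this is $n_1'\equiv\langle\eta,p+q\rangle=0$, which holds by the choice of $\eta$.

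After compression, the summed quantity over $\rho$ becomes $\per$ of a character matrix of $H_0$ with $n/2-1$ compressed columns and one row deleted, summed with signs over the deleted row. This is equivalent to a full permanent on $H_0$ with one extra column $s$ that makes the list zero-sum, namely $s=p+q$ read inside $H_0$. Lemma~\ref{lem:full-character-permanent} then gives valuation exactly $n/2-1$ for each compatible pairing. Because the number of compatible pairings is odd, the total is nonzero.

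The delicate points are the signs attached to each $\rho$ and checking that the deleted-row expansion matches the added-column permanent. I expect this bookkeeping to be the hardest part.
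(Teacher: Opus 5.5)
Your proposal is correct. Up to the Walsh-transform step it is the paper's argument: Jacobi on $C$ followed by Cauchy--Binet for $C^{-1}=\frac1nW^{\mathsf T}\Lambda^{-1}W$ gives the same expansion as Lemma~\ref{lem:general-complementary-minor}, and you then finish via invertibility of the Walsh transform.

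The genuine difference is the nonvanishing input.

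\textbf{The paper's route.} It uses Lemma~\ref{lem:character-shift} to show that all $n/2$ terms with difference $\eta$ equal $b_{0,\eta}(\mathbf m)$; the boundary sign $(-1)^{\langle\xi,p+q\rangle}$ from Lemma~\ref{lem:two-by-two} cancels against the shift sign. It then controls that single coefficient. After row-pair compression only the nonzero characters of $H_0$ remain, so the compressed matrix is no longer of the Hall type. The paper therefore needs the separate Lemma~\ref{lem:sign-permanent}, which covers arbitrary $\pm1$ matrices of order $2^q-1$.

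\textbf{Your route.} You keep the whole signed $\eta$-class and let the boundary sign reconstitute the missing row. The bookkeeping you flagged does work out:
\begin{itemize}
\item $\det W_{Q,A}\det W_{Q,B}=\pm4(-1)^{\langle\rho,p+q\rangle+\langle\eta,a+b\rangle}$, where the $\pm$ depends only on $(a,b)$, because the row order of $Q$ is the same in both factors.
\item Since $\langle\eta,p+q\rangle=0$, the weight $(-1)^{\langle\rho,p+q\rangle}=\eps_{\rho|_{H_0}}(p+q)$ is well defined on the row pair.
\item For each compatible pairing, the weighted sum over the deleted row is therefore exactly the Laplace expansion along an appended column $p+q$. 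This produces a zero-sum list of $|H_0|$ labeled differences.
\item Lemma~\ref{lem:full-character-permanent}, together with the odd number of compatible pairings, finishes the argument.
\end{itemize}
The total valuation is $n-2$. This agrees with the paper's $(r-1)+(n-r-1)$ for $n/2$ equal copies of $\mathbf m!\,b_\eta$.

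\textbf{Comparison.} Your route reduces the two-hole arithmetic directly to the Hall noncancellation lemma, and it dispenses with both Lemmas~\ref{lem:sign-permanent} and~\ref{lem:character-shift}. The paper's route gives finer information, independent of the boundary minors: the exact valuation of every individual deleted coefficient $b_{\xi,\psi}(\mathbf m)$, and the exact vanishing criterion for arbitrary profiles.
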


\begin{corollary}[Exactly two even-weight requests]
\label{cor:two-even}
Let $M=[v_0,\ldots,v_{2^{s-1}-1}]$ have nonzero columns and zero column sum.
If exactly two columns of $M$, counted with multiplicity, have even Hamming
weight, then $M$ has a Hadamard pair solution.
\end{corollary}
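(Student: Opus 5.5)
The plan is to derive the corollary directly from Theorem~\ref{thm:two-hole} by taking $H$ to be the parity hyperplane. Put $H=H_{\rm even}=\{x\in\F_2^s:\wt(x)\equiv0\pmod2\}$. For $s\geq1$ this is a hyperplane, since it is the kernel of the nonzero parity functional. As observed in Section~\ref{sec:prelim}, a request is internal with respect to $H_{\rm even}$ exactly when it has even Hamming weight, and crossing exactly when it has odd weight. The hypothesis says exactly two columns of $M$, counted with multiplicity, have even weight; all remaining columns therefore have odd weight. By Definition~\ref{def:intro-two-hole}, $M$ is then a two-hole instance with respect to $H_{\rm even}$.

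Next I would check the remaining hypotheses of Theorem~\ref{thm:two-hole}. The columns of $M$ are nonzero by assumption, so they are genuine requests. The column sum is zero by assumption, so $M$ is a zero-sum two-hole instance. For the dimension condition: $M$ has $2^{s-1}$ columns, and at least two of them have even weight, so $2^{s-1}\geq2$, which gives $s\geq2$. Theorem~\ref{thm:two-hole} then applies and yields a Hadamard pair solution of $M$.

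No step is a real obstacle. The only points needing a moment's care are that the multiplicity convention in the corollary matches the convention for counting holes (both count with multiplicity, as stated in Section~\ref{sec:prelim}), and that the degenerate case $s=1$ is excluded automatically.
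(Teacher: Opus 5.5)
Your proposal is correct and is exactly the paper's argument: the paper applies Theorem~\ref{thm:two-hole} to the parity hyperplane $H_{\rm even}$, whose internal requests are precisely the even-weight columns. Your additional check that the hypothesis forces $s\geq2$ is a small detail the paper leaves implicit.
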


\begin{proof}
Apply Theorem~\ref{thm:two-hole} to the parity hyperplane
$H_{\rm even}=\{x\in\F_2^s:\wt(x)\equiv0\pmod2\}$.
\end{proof}

The proof of Theorem~\ref{thm:two-hole} has three steps.  First, candidate
locations of the two internal edges are encoded by deleting two rows and two
columns from the Hall circulant.  Second, complementary-minor expansion moves
these deleted vertices to the character side, where $2\times2$ Walsh minors
filter the character differences compatible with the two prescribed internal
directions.  Finally, the dependence on the unknown boundary locations becomes
a Walsh transform; its invertibility forces at least one boundary choice with a
nonzero deleted-circulant coefficient.

Let the two internal requests be $p,q\in H\setminus\{0\}$, counted with
multiplicity.  By the hyperplane count in Section~\ref{sec:prelim}, one
internal edge lies on each affine side.  After translating the right side by
$\alpha$, the $n-2$ crossing requests must therefore be realized by a
bijection between two copies of $H$ with two vertices deleted from each.  The
next subsection develops the minor of $C(z)$ that records exactly such
bijections.

\subsection{Deleted circulants and complementary Walsh minors}
\label{subsec:deleted-framework}

For $A,B\subseteq H$ with $|A|=|B|=h$, write
$A^c=H\setminus A$ and $B^c=H\setminus B$.  The minor
$C(z)_{A^c,B^c}$ records bijections from $A^c$ to $B^c$ in exactly the same
way that the full circulant records permutations of $H$.

\begin{lemma}[Deleted-circulant certificate]
\label{lem:deleted-certificate}
Let $|A|=|B|=h$, and let $\mathbf m$ be a multiplicity profile with
\[
    |\mathbf m|=n-h.
\]
If
\[
    [z^{\mathbf m}]\det C(z)_{A^c,B^c}\neq0,
\]
then there exists a bijection
\[
    \pi:A^c\to B^c
\]
whose realized difference multiplicities are $\mathbf m$.
\end{lemma}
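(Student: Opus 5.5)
The argument follows the proof of Lemma~\ref{lem:circulant-certificate}, with the full circulant replaced by the minor. First, fix orderings of $A^c$ and $B^c$; both have $n-h$ elements. The minor $C(z)_{A^c,B^c}$ is the square matrix $(z_{a+b})_{a\in A^c,\,b\in B^c}$. The Leibniz expansion of its determinant ranges over bijections $\pi:A^c\to B^c$, which the fixed orderings identify with elements of $S_{n-h}$:
\[
    \det C(z)_{A^c,B^c}
    =
    \sum_{\pi:A^c\to B^c}\sgn(\pi)\prod_{a\in A^c}z_{a+\pi(a)} .
\]
The sign depends on the chosen orderings, but only through a global factor of $\pm1$. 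Such a factor does not affect whether a coefficient vanishes.

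Next, I identify which terms contribute to the coefficient of $z^{\mathbf m}$. The monomial $\prod_{a\in A^c}z_{a+\pi(a)}$ has degree $n-h=|\mathbf m|$. Its exponent of $z_x$ is $|\{a\in A^c:a+\pi(a)=x\}|$, so it equals $z^{\mathbf m}$ exactly when the multiset $\{a+\pi(a):a\in A^c\}$ has multiplicity $m_x$ at every $x\in H$. As in the full case, the values $a+\pi(a)$ may repeat; the bijection condition constrains the endpoints, not the differences. Therefore
\[
    [z^{\mathbf m}]\det C(z)_{A^c,B^c}
    =
    \sum_{\pi}\sgn(\pi),
\]
where the sum runs over the bijections $\pi:A^c\to B^c$ whose realized difference multiplicities are $\mathbf m$.

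Finally, if this coefficient is nonzero, the sum on the right is nonempty. Hence at least one such bijection $\pi$ exists, which is the claim. No step is a real obstacle. The only point needing care is that distinct bijections give distinct terms of the Leibniz expansion, and that the coefficient extraction is exact. This holds because the $z_x$ are independent indeterminates, so the terms of the expansion are grouped by monomial without any interaction between different difference profiles.
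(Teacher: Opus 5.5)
Your proof is correct and is essentially the paper's own argument. In both, you fix orders on $A^c$ and $B^c$, expand $\det C(z)_{A^c,B^c}$ over bijections $A^c\to B^c$, and read $[z^{\mathbf m}]\det C(z)_{A^c,B^c}$ as the signed count of bijections whose difference profile is $\mathbf m$; a nonzero signed count then forces at least one such bijection.
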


\begin{proof}
After fixing orders on $A^c$ and $B^c$, the determinant expansion is
\[
    \det C(z)_{A^c,B^c}
    =
    \sum_{\pi:A^c\overset{\sim}{\to}B^c}
    \sgn_{A,B}(\pi)
    \prod_{a\in A^c}z_{a+\pi(a)}.
\]
Thus a nonzero coefficient of $z^{\mathbf m}$ implies that at least one
bijection with difference profile $\mathbf m$ occurs.
\end{proof}

\begin{example}[A two-hole instance as an incomplete Hall permutation]
\label{ex:running-two-hole}
Let
\[
    H=\F_2^2=\{00,01,10,11\},
    \qquad
    \alpha=100,
\]
where we identify $H$ with the last two coordinates of $\F_2^3$.  Take the two
internal requests
\[
    p=01,
    \qquad
    q=10,
\]
and the two crossing requests
\[
    \alpha+00=100,
    \qquad
    \alpha+11=111.
\]
Their total sum is zero.  Choose
\[
    A=\{00,01\},
    \qquad
    B=\{00,10\}.
\]
The set $A$ is an internal $p$-edge on the left, and after translating the
right affine side back to $H$, the set $B$ represents an internal $q$-edge on
the right.  The remaining Hall problem is the incomplete bijection
\[
    A^c=\{10,11\}
    \longrightarrow
    B^c=\{01,11\}.
\]
The corresponding deleted circulant is
\[
    C(z)_{A^c,B^c}
    =
    \begin{pmatrix}
        z_{11}&z_{01}\\
        z_{10}&z_{00}
    \end{pmatrix},
\]
so
\[
    [z_{00}z_{11}]\det C(z)_{A^c,B^c}=1.
\]
The nonzero coefficient is witnessed by the bijection
\[
    10\mapsto01,
    \qquad
    11\mapsto11,
\]
whose reduced differences are $11$ and $00$.  Restoring the affine shift and
the two internal edges gives the perfect matching
\[
    \{000,001\},\quad
    \{100,110\},\quad
    \{010,101\},\quad
    \{011,111\},
\]
with prescribed differences $001,010,111,100$.  Thus the deleted minor is
literally the Hall coefficient matrix after the two holes have removed two
vertices from each side.
\end{example}

For
\[
    Q\subseteq\whH,
\]
define
\[
    \Theta_Q(z)
    =
    \prod_{\rho\in\whH\setminus Q}L_\rho(z).
\]
Whenever
\[
    |\mathbf m|=n-|Q|,
\]
put
\[
    b_Q(\mathbf m)
    =
    [z^{\mathbf m}]\Theta_Q(z).
\]

For Hall, diagonalization replaced the full circulant determinant by a product
of all character forms.  Here the rows and columns deleted by $A$ and $B$ must
remain visible.  Cauchy--Binet and Jacobi's complementary-minor identity do
exactly this: they transfer the deleted boundary to small Walsh minors
$W_{Q,A}$ and $W_{Q,B}$, while the remaining character factors form
$\Theta_Q(z)$.  The following identity is the bridge from a punctured Hall
matrix to deleted character coefficients.

\begin{lemma}[Complementary character-minor expansion]
\label{lem:general-complementary-minor}
Fix global orders on $H$ and $\whH$.  Let $A,B\subseteq H$ satisfy
\[
    |A|=|B|=h.
\]
There exists a sign
\[
    \sigma(A,B)\in\{\pm1\}
\]
such that
\begin{equation}
\label{eq:general-complementary-minor}
\det C(z)_{A^c,B^c}
=
\frac{\sigma(A,B)}{n^h}
\sum_{Q\in\binom{\whH}{h}}
\det W_{Q,A}\,
\det W_{Q,B}\,
\Theta_Q(z).
\end{equation}
The sign $\sigma(A,B)$ is independent of $Q$.
\end{lemma}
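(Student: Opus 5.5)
We will start from the factorization of Lemma~\ref{lem:character-diagonalization}, written as a product of three matrices:
\[
    C(z)=\Bigl(\tfrac1n W^{\mathsf T}\Bigr)\,\Lambda(z)\,W .
\]
Here $W^{\mathsf T}$ has rows indexed by $H$ and columns indexed by $\whH$. The submatrix $C(z)_{A^c,B^c}$ is the product of the row-submatrix $\tfrac1n (W^{\mathsf T})_{A^c,\whH}$, the full diagonal $\Lambda(z)$, and the column-submatrix $W_{\whH,B^c}$.

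Apply Cauchy--Binet twice to this product of an $(n-h)\times n$, an $n\times n$ and an $n\times(n-h)$ matrix. Because $\Lambda(z)$ is diagonal, only terms with the same $(n-h)$-subset $S\subseteq\whH$ on both sides survive. Writing $S=\whH\setminus Q$ with $|Q|=h$, this gives
\[
    \det C(z)_{A^c,B^c}
    =
    n^{-(n-h)}
    \sum_{Q\in\binom{\whH}{h}}
    \det W_{Q^c,A^c}\,
    \det W_{Q^c,B^c}\,
    \Theta_Q(z).
\]
We use here that $\det(W^{\mathsf T})_{A^c,Q^c}=\det W_{Q^c,A^c}$ and that $\det\Lambda_{Q^c,Q^c}=\prod_{\rho\notin Q}L_\rho=\Theta_Q$.

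The remaining step converts each large complementary minor into the small minor on $Q\times A$. Jacobi's complementary-minor identity states that for invertible $W$,
\[
    \det W_{Q^c,A^c}
    =
    \pm\det W\cdot\det\bigl(W^{-1}\bigr)_{A,Q}.
\]
The sign is $(-1)^{\Sigma(Q)+\Sigma(A)}$, where $\Sigma$ denotes the sum of positions in the fixed global orders. Since $W^{-1}=n^{-1}W^{\mathsf T}$, we get $\det(W^{-1})_{A,Q}=n^{-h}\det W_{Q,A}$, and therefore
\[
    \det W_{Q^c,A^c}=\pm n^{-h}\det W\,\det W_{Q,A},
\]
and likewise for $B$. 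Multiplying the two expressions gives the factor $(\det W)^2 n^{-2h}=n^{n-2h}$, using $\det(W)^2=n^n$ as in the proof of Lemma~\ref{lem:character-diagonalization}. Combined with the prefactor $n^{-(n-h)}$, this yields exactly $n^{-h}$. The combined sign is
\[
    (-1)^{\Sigma(Q)+\Sigma(A)}(-1)^{\Sigma(Q)+\Sigma(B)}=(-1)^{\Sigma(A)+\Sigma(B)}.
\]
The $Q$-dependence cancels because it appears squared, so we may define $\sigma(A,B)=(-1)^{\Sigma(A)+\Sigma(B)}$. There is also a possible sign from the orientation of $\det W$, but it too appears squared.

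The main obstacle is sign bookkeeping, not the algebra. Cauchy--Binet requires the rows and columns of every minor to be taken in the induced global orders. Jacobi's identity must be stated with the correct convention: the transposed index placement $(W^{-1})_{A,Q}$, versus $W_{Q,A}$ after transposing $W^{\mathsf T}$. We will fix these conventions once, verify the sign on the $h=0$ case (which recovers Lemma~\ref{lem:character-diagonalization}), and observe that every $Q$-dependent sign occurs exactly twice. This confirms that $\sigma(A,B)$ is independent of $Q$.
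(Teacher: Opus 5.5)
Your proposal is correct and follows the paper's proof essentially step for step. You apply Cauchy--Binet to $C(z)=\frac1nW^{\mathsf T}\Lambda(z)W$, where the diagonal middle factor forces equal index sets, then use Jacobi's complementary-minor identity with $W^{-1}=n^{-1}W^{\mathsf T}$ and $\det(W)^2=n^n$ to obtain the factor $n^{-h}$, and the $Q$-dependent sign appears twice and cancels. Your explicit value $\sigma(A,B)=(-1)^{\Sigma(A)+\Sigma(B)}$ is just a sharper statement of what the paper leaves implicit.
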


\begin{proof}
Put
\[
    I=A^c,
    \qquad
    J=B^c.
\]
Apply Cauchy--Binet to
\[
    C(z)=\frac1nW^{\mathsf T}\Lambda(z)W.
\]
For every $R\subseteq\whH$ of size $n-h$, the corresponding term is
\[
    n^{-(n-h)}
    \det W_{R,I}\,
    \det W_{R,J}\,
    \prod_{\rho\in R}L_\rho(z).
\]
Let
\[
    Q=\whH\setminus R,
    \qquad
    |Q|=h.
\]
Jacobi's complementary-minor identity and
\[
    W^{-1}=n^{-1}W^{\mathsf T}
\]
give
\[
    \det W_{R,I}
    =
    \sigma_1(R,I)\det(W)n^{-h}\det W_{Q,A},
\]
and
\[
    \det W_{R,J}
    =
    \sigma_2(R,J)\det(W)n^{-h}\det W_{Q,B}.
\]
The $R$-dependent complementary-minor sign occurs twice, so the product of the
two signs depends only on $A$ and $B$; denote it by $\sigma(A,B)$.  Since
\[
    \det(W)^2=n^n,
\]
the scalar factor is
\[
    n^{-(n-h)}n^n n^{-2h}=n^{-h}.
\]
Finally,
\[
    \prod_{\rho\in R}L_\rho(z)=\Theta_Q(z),
\]
which proves the formula.
\end{proof}

For $h=0$, Lemma~\ref{lem:general-complementary-minor} is the Hall
factorization
\[
    \det C(z)=P_H(z).
\]
For the two-hole theorem we now specialize to $h=2$.  Thus no new matching
polynomial is introduced: the first punctured Hall problem is encoded by
complementary minors of the same circulant used in the Hall case.

\subsection{The two-deleted coefficient}

After deleting two character factors, the row-pair compression no longer
produces a full Walsh matrix: one obtains a sign matrix of odd order
$2^q-1$.  To control the deleted coefficient uniformly, we therefore need a
valuation statement that holds for \emph{every} sign matrix of this order,
not only for a Sylvester--Hadamard matrix.  The following elementary lemma is
the reason the two-deleted calculation remains exact.

\begin{lemma}[Sign permanents of order $2^q-1$]\label{lem:sign-permanent}
Let $q\geq1$, let $m=2^q-1$, and let $A\in\{\pm1\}^{m\times m}$.
Then
\[
\vTwo(\per A)=\vTwo(m!)=m-q.
\]
In particular, $\per A\neq0$.
\end{lemma}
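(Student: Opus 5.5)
The plan is to prove a congruence first and deduce the valuation from it: for every $A\in\{\pm1\}^{m\times m}$,
\[
    \per A\equiv m!\pmod{2^{\,m-q+1}}.
\]
This suffices because Legendre's formula gives $\vTwo(m!)=m-s_2(m)=m-q$, since $m=2^q-1$ has $q$ binary ones. The congruence then forces $\vTwo(\per A)=m-q$ exactly, and in particular $\per A\neq0$.

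To reach the congruence, I would move from the all-ones matrix $J$, whose permanent is $m!$, to $A$ by flipping the entries equal to $-1$ one at a time. Since the permanent is linear in each entry, changing $a_{ij}$ from $+1$ to $-1$ changes the permanent by $-2\per A^{(i,j)}$. Here $A^{(i,j)}$ is the $(m-1)\times(m-1)$ minor obtained by deleting row $i$ and column $j$. Every intermediate matrix is again a sign matrix, so each minor is a sign matrix of order $m-1=2^q-2$. The congruence therefore reduces to a lower bound for even-order sign matrices:
\[
    \vTwo(\per B)\geq (m-1)-s_2(m-1)=m-q
    \qquad\text{for every }B\in\{\pm1\}^{(m-1)\times(m-1)}.
\]
This uses $s_2(2^q-2)=q-1$, so $\vTwo((m-1)!)=m-q$. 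Granting the bound, each flip changes $\per$ by a multiple of $2^{m-q+1}$, which proves the congruence.

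The main obstacle is this lower bound $\vTwo(\per B)\geq k-s_2(k)$ for sign matrices of arbitrary order $k$. This is a Wang-type divisibility statement for $\pm1$ permanents. I would argue through two reductions.

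\begin{enumerate}[label=\textup{(\alph*)}]
    \item \emph{Superadditivity.} Let $f(k)$ denote the minimum of $\vTwo(\per B)$ over sign matrices $B$ of order $k$. Laplace expansion of the permanent along any $j$ rows expresses $\per B$ as a sum of products of $j\times j$ and complementary $(k-j)\times(k-j)$ sign permanents. Hence $f(k)\geq f(j)+f(k-j)$. Splitting $k$ along its binary digits then reduces the bound to the power-of-two case $f(2^a)\geq 2^a-1$.
    \item \emph{The power-of-two case.} Here the flip argument alone falls short by roughly $a$ powers of $2$. It only yields $\per B\equiv(2^a)!$ modulo $2^{1+f(2^a-1)}$, which is too weak. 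For this case I would instead imitate the row-pair compression of Lemma~\ref{lem:two-row-cancellation} and Lemma~\ref{lem:compatible-pairing-reduction}. First multiply columns by signs so that one row is all $+1$; this changes $\per B$ only by a sign. Then pair this row with each other row in turn. Expanding along a row pair $\{u,u'\}$ gives $2\times2$ permanents $b_{ui}b_{u'j}+b_{uj}b_{u'i}$, each lying in $\{0,\pm2\}$. This extracts one factor $2$ per row pair, as in the Hall recursion. The goal is to show that the surviving compatible pairings come in a structure whose count supplies the remaining powers of $2$ inductively on $a$.
\end{enumerate}

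If the compression in (b) becomes unwieldy, I would fall back on citing the known theorem that $2^{k-s_2(k)}$ divides the permanent of every $k\times k$ $\pm1$ matrix. The case needed here is only $k=2^q-2$, after which the flip congruence completes the proof.
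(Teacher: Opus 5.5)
Your reduction is sound. By multilinearity, flipping one entry of a sign matrix changes the permanent by $-2$ times the permanent of an order-$(m-1)$ sign minor. So $\per A\equiv m!\pmod{2^{m-q+1}}$ does follow from a bound $\vTwo(\per B)\ge m-q$ for all $B\in\{\pm1\}^{(m-1)\times(m-1)}$.

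The gap is in how you propose to get that bound. The general inequality $\vTwo(\per B)\ge k-s_2(k)$ is false. So is the power-of-two target $f(2^a)\ge 2^a-1$ in step (b), for every $a\ge2$. Take $B$ to be $J_k$ with one entry replaced by $-1$. Then $\per B=k!-2(k-1)!=(k-1)!\,(k-2)$, and at $k=2^a$ its valuation is $(2^a-1-a)+1=2^a-a$. For $k=4$ this gives $\per B=12$, with $\vTwo(12)=2<3$. Hence step (b) cannot be carried out, and the divisibility ``theorem'' in your fallback does not hold in the form you state. Superadditivity cannot be rescued with the true values $f(2^a)=2^a-a$ ($a\ge1$) either. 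For $q\ge3$ the binary expansion of $2^q-2$ contains $2^2$, and summing gives only $(2^q-2)-\binom{q}{2}<m-q$. Your remark that the flip argument ``falls short by roughly $a$ powers of $2$'' at $k=2^a$ is really the flip argument being sharp there.

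The bound you need is nevertheless true, because $2^q-2$ is special. The correct general statement is $f(k)\ge k-\max_{0\le j\le k}s_2(j)=k-\lfloor\log_2(k+1)\rfloor$, and this agrees with $k-s_2(k)$ at $k=2^q-2$. Your own flip step proves it by induction on $k$. It gives $\per B\equiv k!\pmod{2^{1+f(k-1)}}$, hence $f(k)\ge\min\bigl(k-s_2(k),\,k-\lfloor\log_2 k\rfloor\bigr)=k-\lfloor\log_2(k+1)\rfloor$.

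The paper avoids the intermediate order altogether. It writes $A=J-2B$ with $B$ a $0$--$1$ matrix and expands $\per A=\sum_{k=0}^{m}(-2)^k(m-k)!\sum_{|I|=|J'|=k}\per B[I,J']$. Every term with $k\ge1$ has valuation at least $m-s_2(m-k)\ge m-q+1$, since integers in $[0,2^q-2]$ have at most $q-1$ binary ones. This gives $\per A\equiv m!\pmod{2^{m-q+1}}$ in one step.
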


\begin{proof}
Write
\[
A=J-2B,
\]
where $J$ is the all-one matrix and $B$ is a $0$--$1$ matrix.  Expanding
the permanent according to the positions in which the $-2B$ term is used
gives
\begin{equation}\label{eq:per-J-2B}
\per(J-2B)
=
\sum_{k=0}^{m}(-2)^k(m-k)!
\sum_{\substack{I,J'\subseteq[m]\\|I|=|J'|=k}}
\per B[I,J'].
\end{equation}
The $k=0$ term is $m\mathord{!}$.  For every $k\geq1$, Legendre's formula gives
\[
\vTwo\bigl(2^k(m-k)!\bigr)
=
k+\vTwo((m-k)!)
=
m-s_2(m-k),
\]
where $s_2(a)$ denotes the number of ones in the binary expansion of $a$.
Since
\[
0\leq m-k\leq2^q-2,
\]
we have $s_2(m-k)\leq q-1$.  Therefore every term in
\eqref{eq:per-J-2B} with $k\geq1$ is divisible by
\[
2^{m-q+1}.
\]
On the other hand,
\[
\vTwo(m!)=m-s_2(m)=m-q.
\]
Hence
\[
\per A\equiv m!\pmod{2^{m-q+1}},
\]
which proves the claimed exact valuation.
\end{proof}

\begin{remark}
Lemma~\ref{lem:sign-permanent} is a statement about \emph{every} sign
matrix of the special odd order $2^q-1$.  Its proof above is self-contained.
For a related $2$-adic study of permanents of Sylvester--Hadamard matrices,
see Chabaud~\cite{Chabaud2018}.
\end{remark}

For distinct characters $\xi,\psi\in\whH$, abbreviate
\[
    \Theta_{\xi,\psi}(z)
    =
    \Theta_{\{\xi,\psi\}}(z),
    \qquad
    b_{\xi,\psi}(\mathbf m)
    =
    b_{\{\xi,\psi\}}(\mathbf m).
\]
Our next goal is to understand which deleted character pairs can contribute
to the prescribed crossing profile.  For a deleted pair
$\{\xi,\psi\}$, only its difference $\eta=\xi+\psi$ will matter.  We need two
facts at once: an exact criterion for vanishing and an exact valuation for every
surviving coefficient.  This is the main arithmetic input of the two-hole
proof.

\begin{lemma}[Two-deleted Walsh coefficient lemma]\label{lem:two-deleted-walsh}
Let $\xi\neq\psi$, let
\[
\eta=\xi+\psi,
\qquad
S=\sigma(\mathbf m),
\qquad
|\mathbf m|=n-2.
\]
Then
\[
b_{\xi,\psi}(\mathbf m)=0
\quad\Longleftrightarrow\quad
\langle\eta,S\rangle=1.
\]
Moreover, if $\langle\eta,S\rangle=0$, then
\begin{equation}\label{eq:walsh-coeff-valuation}
\vTwo\bigl(b_{\xi,\psi}(\mathbf m)\bigr)
=
\vTwo\left(\frac{(n-2)!}{\mathbf m!}\right).
\end{equation}
In particular, the coefficient is nonzero.
\end{lemma}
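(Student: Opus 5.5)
The plan is to mimic the Hall recursion of Section~\ref{sec:hall}, with the deleted pair $\{\xi,\psi\}$ playing the role of one missing row pair. First fix any labeled list $\cD=[d_0,\ldots,d_{n-3}]$ with multiplicity vector $\mathbf m$. The argument of Lemma~\ref{lem:coeff-permanent} applies verbatim to the product $\Theta_{\xi,\psi}(z)$ of the $n-2$ forms $L_\rho$, $\rho\notin\{\xi,\psi\}$. It gives
\[
\mathbf m!\,b_{\xi,\psi}(\mathbf m)=\per W^{\ast}_{\cD},
\]
where $W^{\ast}_{\cD}=(\eps_\rho(d_i))$ has rows $\rho\in\whH\setminus\{\xi,\psi\}$ and labeled columns $i\in[n-2]$. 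The degenerate case $r=1$, i.e.\ $n=2$, is trivial: then $\mathbf m=0$, $S=0$ and $b_{\xi,\psi}(\mathbf m)=1$. So assume $r\geq2$.

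The key observation is that the involution $\rho\mapsto\rho+\eta$ with $\eta=\xi+\psi$ swaps $\xi$ and $\psi$. It therefore preserves $\whH\setminus\{\xi,\psi\}$ without fixed points, and splits these $n-2$ rows into $n/2-1$ pairs $\{\rho,\rho+\eta\}$. Choose a transversal $R$ of these pairs. Lemma~\ref{lem:exact-permanent-regrouping} goes through unchanged for this row set. Lemma~\ref{lem:two-row-cancellation} then kills every column pairing that is not $\eta$-compatible, and replaces each compatible pair $p$ by $2(-1)^{t(p)}\eps_\rho(s(p))$.

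If $\langle\eta,S\rangle=1$, then $n_1\equiv\langle\eta,S\rangle$ is odd, so no $\eta$-compatible pairing exists and $\per W^{\ast}_{\cD}=0$, hence $b_{\xi,\psi}(\mathbf m)=0$. The involution argument of Lemma~\ref{lem:full-character-coeff}(i) with this $\eta$ gives the same conclusion and could serve as a cross-check. The converse direction of the equivalence follows from the valuation statement proved next.

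If $\langle\eta,S\rangle=0$, then $n_1$ and $n_0=n-2-n_1$ are both even, and we obtain
\[
\per W^{\ast}_{\cD}=2^{n/2-1}(-1)^{n_1/2}\sum_{\cP\in\Pi_\eta(\cD)}\per M_{\cP},
\]
where $M_{\cP}$ is the $(n/2-1)\times(n/2-1)$ sign matrix with entries $\eps_\rho(s(p))$, $\rho\in R$, $p\in\cP$. The rows of $M_{\cP}$ restrict to all characters of $H_0$ except the common restriction of $\xi$ and $\psi$. This step is the main obstacle: $M_{\cP}$ is no longer a full character matrix and the compressed list need not be zero-sum, so the induction of Lemma~\ref{lem:full-character-permanent} does not close. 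This is exactly where Lemma~\ref{lem:sign-permanent} is used. Since $n/2-1=2^{r-1}-1$ with $r-1\geq1$, every $\pm1$ matrix of this order has permanent of valuation $(n/2-1)-(r-1)$, regardless of its entries.

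It remains to count. The number $|\Pi_\eta(\cD)|=(n_0-1)!!\,(n_1-1)!!$ is odd, and each summand equals $2^{n/2-r}$ times an odd integer. Hence the sum has valuation exactly $n/2-r$, and
\[
\vTwo\bigl(\per W^{\ast}_{\cD}\bigr)=(n/2-1)+(n/2-r)=n-1-r.
\]
Legendre's formula gives $\vTwo((n-2)!)=n-2-s_2(n-2)=n-2-(r-1)=n-1-r$, since $n-2=2^r-2$ has $r-1$ binary ones. Subtracting $\vTwo(\mathbf m!)$ yields \eqref{eq:walsh-coeff-valuation}, and in particular $b_{\xi,\psi}(\mathbf m)\neq0$. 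This completes both directions of the equivalence.
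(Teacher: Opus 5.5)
Your proposal is correct and follows essentially the paper's proof: row pairs $\{\rho,\rho+\eta\}$, two-row cancellation down to $\eta$-compatible pairings, Lemma~\ref{lem:sign-permanent} applied to the compressed sign matrices of order $2^{r-1}-1$, the odd count $(n_0-1)!!(n_1-1)!!$, and Legendre's formula. The differences are cosmetic. You skip the paper's normalization of the omitted pair to $\{0,\eta\}$, and you get the vanishing direction from the absence of compatible pairings when $n_1$ is odd, whereas the paper uses the row-translation identity $\per=(-1)^{\langle\eta,S\rangle}\per$.
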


\begin{proof}
If $n=2$, then $\whH=\{\xi,\psi\}$, the only profile of size $n-2$ is the empty profile, and $\Theta_{\xi,\psi}(z)=1$.  Thus $S=0$ and the assertion is immediate.  Hence assume $n\geq4$.

Create an $(n-2)\times(n-2)$ matrix
\[
W_{\mathbf m}^{\xi,\psi}
\]
whose rows are indexed by
$\whH\setminus\{\xi,\psi\}$ and whose columns consist of $m_x$
labeled copies of the Walsh column indexed by $x$, for every $x\in H$.
The usual coefficient--permanent correspondence gives
\begin{equation}\label{eq:coeff-permanent}
\mathbf m!\,b_{\xi,\psi}(\mathbf m)
=
\per W_{\mathbf m}^{\xi,\psi}.
\end{equation}

We first prove the vanishing direction.  Translation of the row index by
$\eta$ permutes the set
\[
\whH\setminus\{\xi,\psi\},
\]
because the omitted pair $\{\xi,\psi\}$ is interchanged by
$\rho\mapsto\rho+\eta$.  Under this row permutation, the entry in every
column indexed by $x$ is multiplied by
\[
\eps_\eta(x)=(-1)^{\langle\eta,x\rangle}.
\]
Consequently,
\[
\per W_{\mathbf m}^{\xi,\psi}
=
(-1)^{\langle\eta,S\rangle}
\per W_{\mathbf m}^{\xi,\psi}.
\]
If $\langle\eta,S\rangle=1$, the permanent, and hence the coefficient in
\eqref{eq:coeff-permanent}, is zero.

Assume now that $\langle\eta,S\rangle=0$.  Translating all character
indices by $\xi$ changes the omitted pair from $\{\xi,\psi\}$ to
$\{0,\eta\}$ and only multiplies columns by signs.  Thus it changes the
permanent by an overall sign and does not change its $2$-adic valuation.
We may therefore assume that the omitted characters are $0$ and $\eta$.

Choose coordinates
\[
H=\F_2\times H_0,
\qquad
|H_0|=m_0=n/2,
\]
so that
\[
\eps_\eta(t,x)=(-1)^t.
\]
The remaining characters occur in the $m_0-1$ row pairs
\[
(0,a),(1,a),
\qquad
0\neq a\in\whH_0.
\]
Let $n_t$ be the number of labeled columns of the form $(t,x)$.  The
assumption $\langle\eta,S\rangle=0$ says that $n_1$ is even.  Since
$n_0+n_1=n-2$ is even, $n_0$ is even as well.

Consider one row pair $(0,a),(1,a)$ and two columns
$(t_1,x_1),(t_2,x_2)$.  The corresponding $2\times2$ permanent equals
\begin{align*}
&\eps_{(0,a)}(t_1,x_1)\eps_{(1,a)}(t_2,x_2)
+
\eps_{(0,a)}(t_2,x_2)\eps_{(1,a)}(t_1,x_1)
\\
&\hspace{2cm}=
\begin{cases}
0, & t_1\neq t_2,\\[2mm]
2(-1)^t\eps_a(x_1+x_2), & t_1=t_2=t.
\end{cases}
\end{align*}
Therefore, when the permanent of $W_{\mathbf m}^{0,\eta}$ is expanded by
first grouping the two columns assigned to each row pair, only pairings of
the $t=0$ columns among themselves and of the $t=1$ columns among
themselves survive.

Fix such a pairing $\mathcal P$ of the labeled columns.  After the factor
$2$ is extracted from each of the $m_0-1$ row pairs, the contribution of
$\mathcal P$ is
\begin{equation}\label{eq:pairing-contribution}
(-1)^{n_1/2}2^{m_0-1}\per A_{\mathcal P},
\end{equation}
where $A_{\mathcal P}$ is a sign matrix of order
\[
m_0-1=2^{r-1}-1.
\]
Indeed, its rows are indexed by $a\in\whH_0\setminus\{0\}$, its
columns are indexed by the pairs $\{(t,x),(t,x')\}\in\mathcal P$, and its
corresponding entry is
\[
\eps_a(x+x')\in\{\pm1\}.
\]

By Lemma~\ref{lem:sign-permanent},
\[
\vTwo(\per A_{\mathcal P})
=
(m_0-1)-(r-1)
=
m_0-r.
\]
Hence every contribution in \eqref{eq:pairing-contribution} has exact
valuation
\[
(m_0-1)+(m_0-r)=n-r-1.
\]
After division by $2^{n-r-1}$, every such contribution is odd.  The number
of admissible labeled pairings is
\[
(n_0-1)!!(n_1-1)!!,
\]
with the convention $(-1)!!=1$.  This number is odd.  Thus an odd number
of odd normalized contributions are being added, and no cancellation can
raise the $2$-adic valuation.  We conclude that
\begin{equation}\label{eq:walsh-per-valuation}
\vTwo\bigl(\per W_{\mathbf m}^{\xi,\psi}\bigr)
=n-r-1.
\end{equation}

Finally, since $n-2$ has binary digit sum $r-1$, Legendre's formula gives
\[
\vTwo((n-2)!)
=(n-2)-(r-1)
=n-r-1.
\]
Combining this identity, \eqref{eq:coeff-permanent}, and
\eqref{eq:walsh-per-valuation} yields
\[
\vTwo\bigl(b_{\xi,\psi}(\mathbf m)\bigr)
=
n-r-1-\vTwo(\mathbf m!)
=
\vTwo\left(\frac{(n-2)!}{\mathbf m!}\right),
\]
as required.
\end{proof}

We shall also use the following immediate character-translation identity.

\begin{lemma}[Character shift]\label{lem:character-shift}
Let $\eta\neq0$.  For every $\xi\in\whH$ and every profile
$\mathbf m$ of size $n-2$ with $S=\sigma(\mathbf m)$,
\begin{equation}\label{eq:character-shift}
b_{\xi,\xi+\eta}(\mathbf m)
=
(-1)^{\langle\xi,S\rangle}b_{0,\eta}(\mathbf m).
\end{equation}
\end{lemma}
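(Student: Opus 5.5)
The plan is to compare coefficient by coefficient, using the product definition $\Theta_{\xi,\xi+\eta}(z)=\prod_{\rho\in\whH\setminus\{\xi,\xi+\eta\}}L_\rho(z)$ and a change of variables that translates the character indices by $\xi$. The map $\rho\mapsto\rho+\xi$ is a bijection from $\whH\setminus\{0,\eta\}$ onto $\whH\setminus\{\xi,\xi+\eta\}$. Hence
\[
    \Theta_{\xi,\xi+\eta}(z)=\prod_{\rho\in\whH\setminus\{0,\eta\}}L_{\rho+\xi}(z).
\]
Since $\eps_{\rho+\xi}(x)=\eps_\rho(x)\eps_\xi(x)$, each shifted form satisfies
\[
    L_{\rho+\xi}(z)=\sum_{x\in H}\eps_\rho(x)\bigl(\eps_\xi(x)z_x\bigr)=L_\rho(z'),
    \qquad z'_x=\eps_\xi(x)z_x .
\]
Therefore $\Theta_{\xi,\xi+\eta}(z)=\Theta_{0,\eta}(z')$.

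Next I extract the coefficient of $z^{\mathbf m}$ under this diagonal substitution. The monomial $z'^{\mathbf m}$ equals $\prod_{x}\eps_\xi(x)^{m_x}\,z^{\mathbf m}$. The scalar factor is
\[
    \prod_{x\in H}\eps_\xi(x)^{m_x}=(-1)^{\langle\xi,\sum_x m_x x\rangle}=(-1)^{\langle\xi,S\rangle}.
\]
A diagonal sign rescaling of the variables sends monomials to signed monomials bijectively, so comparing coefficients of $z^{\mathbf m}$ gives
\[
    b_{\xi,\xi+\eta}(\mathbf m)=(-1)^{\langle\xi,S\rangle}\,b_{0,\eta}(\mathbf m),
\]
which is \eqref{eq:character-shift}.

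An equivalent route is through the permanent \eqref{eq:coeff-permanent}. Translating rows by $\xi$ multiplies each labeled column indexed by $x$ by $\eps_\xi(x)$, which multiplies the permanent by $(-1)^{\langle\xi,S\rangle}$. Dividing by $\mathbf m!$ then gives the same identity.

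There is no real obstacle. The only point needing care is the bookkeeping that the product of column signs is exactly $(-1)^{\langle\xi,S\rangle}$, which is bilinearity of the pairing applied to $S=\sigma(\mathbf m)$.
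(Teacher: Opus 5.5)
Your proof is correct and follows the paper's argument: translating the character indices by $\xi$ gives $\Theta_{\xi,\xi+\eta}(z)=\Theta_{0,\eta}\bigl((\eps_\xi(x)z_x)_{x\in H}\bigr)$, and extracting the coefficient of $z^{\mathbf m}$ yields the sign $\prod_{x\in H}\eps_\xi(x)^{m_x}=(-1)^{\langle\xi,S\rangle}$. Your remark about the equivalent permanent route via \eqref{eq:coeff-permanent} is also valid but not needed.
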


\begin{proof}
Translation of the character index by $\xi$ gives
\[
\Theta_{\xi,\xi+\eta}(z)
=
\Theta_{0,\eta}\bigl((\eps_\xi(x)z_x)_{x\in H}\bigr).
\]
Taking the coefficient of $z^{\mathbf m}$ gives
\[
\prod_{x\in H}\eps_\xi(x)^{m_x}
=
(-1)^{\langle\xi,S\rangle},
\]
which proves \eqref{eq:character-shift}.
\end{proof}

The deleted coefficient lemma controls the character side, but the prescribed
internal directions $p$ and $q$ have not yet entered.  They enter through the
two Walsh minors attached to the deleted vertex pairs.  The next calculation
is the boundary filter: a character difference $\eta$ survives a pair of
vertices at internal difference $u$ exactly when
$\langle\eta,u\rangle=1$.

\begin{lemma}[A two-by-two Walsh minor]\label{lem:two-by-two}
Let $\eta\neq0$, let $Q=\{\xi,\xi+\eta\}$, and let
\[
A=\{a,a+u\},
\qquad
u\neq0.
\]
Then
\[
\det W_{Q,A}=0
\quad\Longleftrightarrow\quad
\langle\eta,u\rangle=0.
\]
If the rows are ordered as $(\xi,\xi+\eta)$ and the columns as
$(a,a+u)$, then for $\langle\eta,u\rangle=1$,
\begin{equation}\label{eq:2by2-walsh}
\det W_{Q,A}
=
-2(-1)^{\langle\xi,u\rangle+\langle\eta,a\rangle}.
\end{equation}
\end{lemma}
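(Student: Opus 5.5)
The proof is a direct $2\times2$ computation; no earlier lemma beyond the multiplicativity of characters is needed. With the rows ordered $(\xi,\xi+\eta)$ and the columns $(a,a+u)$,
\[
W_{Q,A}=
\begin{pmatrix}
\eps_\xi(a)&\eps_\xi(a+u)\\
\eps_{\xi+\eta}(a)&\eps_{\xi+\eta}(a+u)
\end{pmatrix}.
\]
The plan is to expand this determinant using the identities $\eps_{\xi+\eta}(x)=\eps_\xi(x)\eps_\eta(x)$ and $\eps_\rho(a+u)=\eps_\rho(a)\eps_\rho(u)$. These are the same identities that drive Lemma~\ref{lem:two-row-cancellation}, with the roles of rows and columns interchanged.

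Carrying out the expansion, the determinant equals
\[
\eps_\xi(a)\eps_{\xi+\eta}(a)\eps_{\xi+\eta}(u)-\eps_\xi(a)\eps_\xi(u)\eps_{\xi+\eta}(a).
\]
Since $\eps_\xi(a)\eps_{\xi+\eta}(a)=\eps_\xi(a)^2\eps_\eta(a)=\eps_\eta(a)$, this simplifies to
\[
\det W_{Q,A}=\eps_\eta(a)\,\eps_\xi(u)\bigl(\eps_\eta(u)-1\bigr).
\]
The first two factors are signs, so the determinant vanishes exactly when $\eps_\eta(u)=1$, that is, when $\langle\eta,u\rangle=0$. This gives the stated equivalence.

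When $\langle\eta,u\rangle=1$, the last factor is $-2$, and the formula becomes $-2(-1)^{\langle\xi,u\rangle+\langle\eta,a\rangle}$, which is \eqref{eq:2by2-walsh}. The only place that needs care is the sign bookkeeping. One must keep the fixed row and column orders so that the sign of the determinant is exactly the one claimed, and one must use that $\eps_\rho(x)^2=1$. The standing hypotheses $\eta\neq0$ and $u\neq0$ ensure that $Q$ and $A$ are genuine two-element sets, so the minor is well defined.
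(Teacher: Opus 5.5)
Your proposal is correct and is essentially the paper's own argument: expand the $2\times2$ determinant directly, use $\eps_{\xi+\eta}=\eps_\xi\eps_\eta$ and $\eps_\rho(a+u)=\eps_\rho(a)\eps_\rho(u)$ to factor it as $\eps_\eta(a)\eps_\xi(u)\bigl(\eps_\eta(u)-1\bigr)$, and read off both the vanishing criterion and the value $-2(-1)^{\langle\xi,u\rangle+\langle\eta,a\rangle}$. The only point you leave implicit is that the vanishing criterion does not depend on the row and column orders, since reordering changes the determinant only by a sign.
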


\begin{proof}
Direct expansion gives
\begin{align*}
\det W_{Q,A}
&=
\eps_\xi(a)\eps_{\xi+\eta}(a+u)
-
\eps_\xi(a+u)\eps_{\xi+\eta}(a)\\
&=
\eps_\xi(u)\eps_\eta(a)
\bigl(\eps_\eta(u)-1\bigr).
\end{align*}
This is zero when $\eps_\eta(u)=1$ and equals
$-2\eps_\xi(u)\eps_\eta(a)$ otherwise.
\end{proof}

We now combine the previous lemmas into the exact Fourier formula needed
for the two-hole argument.

\subsection{From deleted coefficients to the two-hole matching}
\label{subsec:two-hole-transform}

The deleted coefficient still depends on the unknown locations of the two
internal edges.  The next formula packages this dependence as a Walsh
transform.  Its invertibility is the final global step of the proof.

\begin{proposition}[Deleted-circulant coefficient as a Walsh transform]
\label{prop:minor-walsh-transform}
Let $p,q\in H\setminus\{0\}$ and let $\mathbf m$ be a profile
satisfying
\[
|\mathbf m|=n-2,
\qquad
\sigma(\mathbf m)=p+q.
\]
For $a,b\in H$, put
\[
A_a=\{a,a+p\},
\qquad
B_b=\{b,b+q\},
\]
and for $\eta\neq0$ put
\[
b_\eta(\mathbf m)=b_{0,\eta}(\mathbf m)
=[z^{\mathbf m}]\Theta_{0,\eta}(z).
\]
Then there exists a sign $\sigma_{a,b}\in\{\pm1\}$ such that
\begin{equation}\label{eq:minor-walsh-transform}
[z^{\mathbf m}]\det C(z)_{A_a^c,B_b^c}
=
\sigma_{a,b}\frac{2}{n}
\sum_{\substack{\eta\in\whH\\
\langle\eta,p\rangle=1\\
\langle\eta,q\rangle=1}}
(-1)^{\langle\eta,a+b\rangle}b_\eta(\mathbf m).
\end{equation}
\end{proposition}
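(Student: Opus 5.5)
Start from Lemma~\ref{lem:general-complementary-minor} with $h=2$, $A=A_a$, $B=B_b$:
\[
\det C(z)_{A_a^c,B_b^c}
=\frac{\sigma(A_a,B_b)}{n^2}\sum_{Q\in\binom{\whH}{2}}\det W_{Q,A_a}\det W_{Q,B_b}\,\Theta_Q(z),
\]
and extract $[z^{\mathbf m}]$ termwise to obtain $\sum_Q\det W_{Q,A_a}\det W_{Q,B_b}\,b_Q(\mathbf m)$. Write each unordered pair as $Q=\{\xi,\xi+\eta\}$ with $\eta\neq0$. Each $\eta$ corresponds to exactly $n/2$ unordered pairs, so we may sum over ordered representatives $\xi\in\whH$ and divide by $2$.

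The boundary filter comes first. By Lemma~\ref{lem:two-by-two}, with rows ordered $(\xi,\xi+\eta)$, $\det W_{Q,A_a}$ vanishes unless $\langle\eta,p\rangle=1$, and then equals $-2(-1)^{\langle\xi,p\rangle+\langle\eta,a\rangle}$. Similarly, $\det W_{Q,B_b}$ vanishes unless $\langle\eta,q\rangle=1$, and then equals $-2(-1)^{\langle\xi,q\rangle+\langle\eta,b\rangle}$. The product is $4(-1)^{\langle\xi,p+q\rangle+\langle\eta,a+b\rangle}$. This product is insensitive to the row ordering, because reordering the rows multiplies both determinants by $-1$, so it is well defined on unordered $Q$. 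Column orderings, by contrast, must be fixed consistently with the global order used in Lemma~\ref{lem:general-complementary-minor}. This only introduces a sign depending on $a$, $b$, $p$, $q$, which we absorb into $\sigma_{a,b}$.

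Next comes the character side. By Lemma~\ref{lem:character-shift}, $b_{\xi,\xi+\eta}(\mathbf m)=(-1)^{\langle\xi,S\rangle}b_\eta(\mathbf m)$ with $S=p+q$. This sign exactly cancels the $(-1)^{\langle\xi,p+q\rangle}$ from the Walsh minors, so the summand is independent of $\xi$. This cancellation is the crux of the proposition, and it is precisely where the hypothesis $\sigma(\mathbf m)=p+q$ enters. Summing over the $n/2$ unordered pairs with a given $\eta$ then gives the factor $n/2$. The total scalar is therefore
\[
\frac{1}{n^2}\cdot 4\cdot\frac n2=\frac 2n,
\]
multiplied by $\sum_{\eta}(-1)^{\langle\eta,a+b\rangle}b_\eta(\mathbf m)$ over $\eta$ with $\langle\eta,p\rangle=\langle\eta,q\rangle=1$. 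This sum automatically excludes $\eta=0$.

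The expected main obstacle is the sign bookkeeping. One must confirm that $\sigma(A,B)$ from Lemma~\ref{lem:general-complementary-minor}, together with the column-order conventions inside $A_a$ and $B_b$ (which of $a$, $a+p$ comes first in the global order), yields a single sign $\sigma_{a,b}$ independent of $Q$, hence of $\xi$ and $\eta$. The independence from $Q$ is guaranteed by Lemma~\ref{lem:general-complementary-minor}. If the global order lists $a+p$ before $a$, swapping the columns flips the sign of $\det W_{Q,A_a}$ uniformly in $Q$, so the same argument applies. Lemma~\ref{lem:two-deleted-walsh} is not needed here; it will be used afterwards to show that the right-hand side is a nonvanishing Walsh transform.
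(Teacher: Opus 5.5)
Your proposal is correct and follows the paper's proof essentially step for step. You expand via Lemma~\ref{lem:general-complementary-minor} with $h=2$ and group the pairs $Q=\{\xi,\xi+\eta\}$ by $\eta$. You filter with Lemma~\ref{lem:two-by-two}, with row-order signs cancelling in the product and column-order signs absorbed into $\sigma_{a,b}$. You then cancel $(-1)^{\langle\xi,p+q\rangle}$ against Lemma~\ref{lem:character-shift} using $\sigma(\mathbf m)=p+q$, and count $n/2$ pairs per $\eta$ to obtain $\frac{1}{n^2}\cdot\frac n2\cdot 4=\frac 2n$.
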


\begin{proof}
Take the coefficient of $z^{\mathbf m}$ in
\eqref{eq:general-complementary-minor}.  Group the unordered character pairs
$Q=\{\xi,\psi\}$ by their nonzero difference
\[
\eta=\xi+\psi.
\]
For fixed $\eta\neq0$, there are exactly $n/2$ unordered pairs
$\{\xi,\xi+\eta\}$.

By Lemma~\ref{lem:two-by-two}, a term can survive only if
\[
\langle\eta,p\rangle
=
\langle\eta,q\rangle
=1.
\]
For such an $\eta$, let $Q=\{\xi,\xi+\eta\}$.  Reordering the two
columns of $W_{Q,A_a}$ as $(a,a+p)$ and those of $W_{Q,B_b}$ as
$(b,b+q)$ may introduce signs depending on $a$ and $b$, but these signs are
independent of $Q$ and are absorbed into the overall factor
$\sigma_{a,b}$.  The row-order sign is the same in the two determinants and
therefore cancels in their product.  Using \eqref{eq:2by2-walsh} twice then
gives
\begin{align*}
\det W_{Q,A_a}\det W_{Q,B_b}
&=
4(-1)^{\langle\xi,p+q\rangle
+\langle\eta,a+b\rangle}.
\end{align*}
By Lemma~\ref{lem:character-shift} and the assumption
$\sigma(\mathbf m)=p+q$,
\[
b_{\xi,\xi+\eta}(\mathbf m)
=
(-1)^{\langle\xi,p+q\rangle}b_\eta(\mathbf m).
\]
Thus the two factors depending on $\xi$ cancel, and every one of the
$n/2$ character pairs with difference $\eta$ contributes
\[
4(-1)^{\langle\eta,a+b\rangle}b_\eta(\mathbf m).
\]
Multiplying by the factor $n^{-2}$ from
Lemma~\ref{lem:general-complementary-minor} yields
\[
\frac1{n^2}\cdot\frac n2\cdot4=\frac2n,
\]
which proves \eqref{eq:minor-walsh-transform}.
\end{proof}

\begin{example}[The Walsh filter in the running two-hole instance]
\label{ex:two-hole-filter}
Return to Example~\ref{ex:running-two-hole}.  Here
\[
    p=01,
    \qquad
    q=10,
    \qquad
    \sigma(\mathbf m)=00+11=11=p+q.
\]
The two minor conditions
\[
    \langle\eta,p\rangle=1,
    \qquad
    \langle\eta,q\rangle=1
\]
leave only the character difference $\eta=11$.  Thus all deleted character
pairs with the wrong direction vanish automatically, and the entire boundary
dependence is carried by one surviving Walsh frequency.  Moreover,
\[
    \langle11,\sigma(\mathbf m)\rangle
    =
    \langle11,11\rangle
    =0,
\]
so Lemma~\ref{lem:two-deleted-walsh} guarantees that the corresponding deleted
coefficient is nonzero.  In fact,
\[
    b_{11}(\mathbf m)
    =[z_{00}z_{11}]\Theta_{0,11}(z)
    =-2.
\]
For the choice $a=b=00$ used in Example~\ref{ex:running-two-hole}, the Walsh
sum in Proposition~\ref{prop:minor-walsh-transform} therefore has a single
nonzero term.  Up to the harmless complementary-minor sign, the proposition
recovers the coefficient
\[
    [z_{00}z_{11}]\det C(z)_{A^c,B^c}=1
\]
computed directly there.  The general proof is the same mechanism with many
surviving frequencies: the Walsh transform guarantees that they cannot vanish
for every boundary $a+b$.
\end{example}

\begin{proof}[Proof of Theorem~\ref{thm:two-hole}]
If $s=2$, then $n=2$ and both requests are internal.  The zero-sum condition
forces them to be equal, say $p,p$, and the two cosets of $H$ are paired
internally in direction $p$.  Hence assume $s\geq3$.

Let
\[
    p,q\in H\setminus\{0\}
\]
be the two holes, counted with multiplicity.  Write the remaining $n-2$
crossing requests uniquely as
\[
    \alpha+d_1,\ldots,\alpha+d_{n-2},
    \qquad
    d_i\in H.
\]
Let $\mathbf m$ be the multiplicity profile of
\[
    d_1,\ldots,d_{n-2}.
\]
Since $n-2$ is even and the total request sum is zero,
\[
    0
    =
    p+q+\sum_{i=1}^{n-2}(\alpha+d_i)
    =
    p+q+\sum_{i=1}^{n-2}d_i.
\]
Hence
\[
    |\mathbf m|=n-2,
    \qquad
    \sigma(\mathbf m)=p+q.
\]

For $\eta\in\whH$, define
\[
h(\eta)
=
\begin{cases}
 b_\eta(\mathbf m),
 &\langle\eta,p\rangle
 =\langle\eta,q\rangle=1,\\
 0,&\text{otherwise}.
\end{cases}
\]
The support condition is nonempty.  If $p=q$, it is a nonempty affine
hyperplane of $\whH$.  If $p\neq q$, then $p$ and $q$ are linearly
independent over $\F_2$, and the two affine equations are simultaneously
solvable.

For every $\eta$ in the support of $h$,
\[
    \langle\eta,\sigma(\mathbf m)\rangle
    =
    \langle\eta,p+q\rangle
    =
    1+1
    =
    0.
\]
Lemma~\ref{lem:two-deleted-walsh} therefore gives
\[
    b_\eta(\mathbf m)\neq0.
\]
Thus
\[
    h\not\equiv0.
\]

The Walsh transform on $H$ is invertible.  Hence there exists $t\in H$ such
that
\[
    \sum_{\eta\in\whH}
    (-1)^{\langle\eta,t\rangle}h(\eta)
    \neq0.
\]
Choose $a,b\in H$ with
\[
    a+b=t,
\]
and put
\[
    A=\{a,a+p\},
    \qquad
    B=\{b,b+q\}.
\]
By Proposition~\ref{prop:minor-walsh-transform},
\[
    [z^{\mathbf m}]
    \det C(z)_{A^c,B^c}
    \neq0.
\]
Lemma~\ref{lem:deleted-certificate} yields a bijection
\[
    \pi:H\setminus A\longrightarrow H\setminus B
\]
whose realized difference profile is $\mathbf m$.

We now construct the matching in $\F_2^s$.  Use the internal edge
\[
    \{a,a+p\}\subseteq H
\]
for the hole $p$, and the internal edge
\[
    \{\alpha+b,\alpha+b+q\}\subseteq H+\alpha
\]
for the hole $q$.  For every $x\in H\setminus A$, use the crossing edge
\[
    \{x,\alpha+\pi(x)\}.
\]
Its difference is
\[
    x+\alpha+\pi(x)
    =
    \alpha+(x+\pi(x)),
\]
so the crossing edges realize exactly
\[
    \alpha+d_1,\ldots,\alpha+d_{n-2}
\]
with multiplicity.  The internal and crossing edges are pairwise disjoint and
cover $\F_2^s$.  Hence they form a Hadamard pair solution for $M$.
\end{proof}

\begin{remark}[No induction on the hyperplane]
The proof uses $H$ only as the abelian group on which the Walsh transform is
taken.  It does not assume Conjecture~\ref{conj:bgs} in dimension $s-1$.
The deleted minor retains the exact boundary information created by the two
internal edges.
\end{remark}

\section{Alternating-Path Exchanges and Constructive Symmetric Cases}
\label{sec:alternating-exchange}
\label{sec:constructive-complements}

We now return to the constructive side of the Hall case.  The algorithm
\emph{BSolution}~\cite{YohananovYaakobi2022} solves the same all-crossing
hyperplane family as Hall, but keeps the two affine sides explicit.  Its local
operation transfers the unresolved redundancy along an alternating path.
We first formalize this exchange and its endpoint limitation and then use it in
two symmetric multiplicity families.

\subsection{The \texorpdfstring{\emph{BSolution}}{BSolution} exchange and its endpoint limitation}
\label{subsec:bsolution-exchange}

The exchange is stated for an arbitrary nonzero direction $x\in\F_2^s$,
rather than only for a unit vector.

Fix a current ordering $\cH=[h_0,h_1,\ldots,h_{2n-1}]$ and let
\[
    \cP(\cH)=\bigl\{\{h_{2t},h_{2t+1}\}:t\in[n]\bigr\}
\]
be the current perfect matching.  For $x\in\F_2^s\setminus\{0\}$,
translation by $x$ gives the perfect matching
\[
    \cT_x
    =
    \bigl\{\{h,h+x\}:h\in\F_2^s\bigr\},
\]
where each unordered edge is retained once.

\begin{definition}[The graph $G_x(\cH)$]
\label{def:Gx}
For $x\neq0$, define
\[
    G_x(\cH)=\cP(\cH)\cup\cT_x.
\]
If the same unordered pair belongs to both matchings, the two edges are
retained as parallel edges, one from each matching.
\end{definition}

Every vertex is incident with one edge of $\cP(\cH)$ and one edge of
$\cT_x$.  Hence every connected component of $G_x(\cH)$ is an even
alternating cycle.

Following the terminology of~\cite{YohananovYaakobi2022}, we call an
alternating path used by \emph{BSolution} to transfer the unresolved
redundancy from the current internal pair to another unfixed internal pair a
\emph{good-path}.  The algorithm is always referred to as \emph{BSolution};
the term \emph{good-path} refers only to the alternating path on which its
local exchange is performed.

\begin{example}[An eight-vertex alternating graph]
Let $s=3$ and let the current ordering of the Hadamard columns be
\[
\begin{aligned}
    h_0&=000, & h_1&=100, & h_2&=010, & h_3&=110,\\
    h_4&=001, & h_5&=101, & h_6&=011, & h_7&=111.
\end{aligned}
\]
Thus the current matching is
\[
    \cP(\cH)
    =
    \bigl\{
        \{h_0,h_1\},
        \{h_2,h_3\},
        \{h_4,h_5\},
        \{h_6,h_7\}
    \bigr\}.
\]
Take
\[
    x=101.
\]
Then
\[
    \cT_x
    =
    \bigl\{
        \{h_0,h_5\},
        \{h_1,h_4\},
        \{h_2,h_7\},
        \{h_3,h_6\}
    \bigr\}.
\]
The graph $G_x(\cH)$ is shown in Figure~\ref{fig:Gx-eight}.  Dashed edges are
current-pairing edges and solid edges belong to $\cT_x$.  The graph consists
of two alternating $4$-cycles.  In the upper cycle, the one-edge path
$h_0-h_5$ is a simple alternating path whose edge belongs to $\cT_x$ and whose
endpoints lie in two distinct current pairs.  Swapping $h_0$ and $h_5$ changes
exactly the sums of $\{h_0,h_1\}$ and $\{h_4,h_5\}$, adding $x$ to both.  This
is the local mechanism used below.

\begin{figure}[t]
\centering
\resizebox{0.50\textwidth}{!}{%
\begin{tikzpicture}[
    x=1.25cm,
    y=1.25cm,
    v/.style={circle,draw,thick,minimum size=9mm,inner sep=0pt,font=\small},
    pair/.style={thick,dashed},
    trans/.style={thick,line cap=round}
]
\node[v] (h0) at (0,2) {$h_0$};
\node[v] (h1) at (2.2,2) {$h_1$};
\node[v] (h4) at (5.4,2) {$h_4$};
\node[v] (h5) at (7.6,2) {$h_5$};

\node[v] (h2) at (0,0) {$h_2$};
\node[v] (h3) at (2.2,0) {$h_3$};
\node[v] (h6) at (5.4,0) {$h_6$};
\node[v] (h7) at (7.6,0) {$h_7$};

\draw[pair] (h0)--(h1);
\draw[pair] (h4)--(h5);
\draw[pair] (h2)--(h3);
\draw[pair] (h6)--(h7);

\draw[trans] (h0) to[bend left=18] (h5);
\draw[trans] (h1) to[bend right=18] (h4);
\draw[trans] (h3) to[bend left=20] (h6);
\draw[trans] (h2) to[bend right=16] (h7);
\end{tikzpicture}%
}
\caption{The graph $G_x(\cH)$ for the eight-vertex example.  Dashed edges are
the current pairs in $\cP(\cH)$ and solid edges are the translation edges in
$\cT_x$.  Each connected component is an alternating cycle.}
\label{fig:Gx-eight}
\end{figure}
\end{example}

The local exchange used below is the following elementary observation.

\begin{lemma}[Alternating-path switch]
\label{lem:alternating-path-switch}
Let $P$ be a simple alternating path in $G_x(\cH)$ whose first and last edges
belong to $\cT_x$, and assume that the two endpoints of $P$ belong to distinct
current pairs.  Swap the endpoints of every edge of $P$ belonging to
$\cT_x$ and apply the resulting permutation of $\F_2^s$ to the current
pairs.  Then only the two current-pair sums incident with the endpoints of
$P$ change, and each is increased by $x$.  Every current-pair sum in the
interior of $P$ is unchanged.
\end{lemma}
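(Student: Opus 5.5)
Write the path as $P=w_0w_1\cdots w_{2k+1}$, where the edges $w_{2j}w_{2j+1}$ ($0\le j\le k$) lie in $\cT_x$ and the edges $w_{2j+1}w_{2j+2}$ ($0\le j<k$) lie in $\cP(\cH)$.  The proof is a direct bookkeeping of how the swap acts on each current pair, organized by where the pair meets $P$.  The transformation is the permutation $\tau$ of $\F_2^s$ that exchanges $w_{2j}\leftrightarrow w_{2j+1}$ for every $\cT_x$-edge of $P$ and fixes all other vertices.  Since $P$ is simple, these transpositions are disjoint, so $\tau$ is a well-defined involution.  Because each $\cT_x$-edge satisfies $w_{2j+1}=w_{2j}+x$, every moved vertex satisfies $\tau(v)=v+x$.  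The new matching is $\tau(\cP(\cH))=\{\{\tau(u),\tau(v)\}:\{u,v\}\in\cP(\cH)\}$.  It is again a perfect matching, and the new sum of a pair is $u+v+x\cdot(\#\text{moved endpoints})$.

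The main step is to count, for each current pair, how many of its endpoints are moved by $\tau$, that is, how many lie on $P$.  A pair disjoint from $P$ is untouched.  An interior pair edge $\{w_{2j+1},w_{2j+2}\}$ of $P$ has both endpoints moved, so its sum changes by $2x=0$.  The remaining case is a current pair containing an endpoint of $P$.

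Consider the endpoint $w_0$.  Its current partner $w_0'$ is not $w_1$, because the first edge of $P$ lies in $\cT_x$ and each vertex has exactly one $\cP$-edge.  The subtle point, and the only real obstacle, is to rule out $w_0'$ lying on $P$.  If $w_0'$ were an interior vertex of $P$, then its $\cP$-edge would be the path edge at that vertex, and that path edge cannot go to $w_0$, since $P$ is simple and $w_0$ is an endpoint.  If $w_0'$ were the other endpoint $w_{2k+1}$, the two endpoints would lie in the same current pair, contradicting the hypothesis.  Hence exactly one endpoint of the pair $\{w_0,w_0'\}$ is moved, and its sum increases by $x$.  The same argument applies to the pair of $w_{2k+1}$.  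These two pairs are distinct by hypothesis.

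Finally, every current pair either is one of these two endpoint pairs, or is an interior pair edge of $P$, or is disjoint from $P$.  This holds because each vertex of $P$ lies in exactly one current pair, and interior vertices have their pair edge on $P$.  The three cases together give the statement: only the two endpoint pairs change, each by exactly $x$, and every interior pair keeps its sum.
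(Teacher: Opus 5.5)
Your proof is correct and follows essentially the same route as the paper: each moved vertex is translated by $x$, and current pairs are classified by how many of their endpoints lie on $P$. Interior pairs have both endpoints moved and the two endpoint pairs have exactly one; the paper justifies the endpoint case via the cycle structure of $G_x(\cH)$, and you justify it via simplicity of $P$, uniqueness of each vertex's $\cP(\cH)$-edge, and the distinct-pairs hypothesis. One small remark: $G_x(\cH)$ may contain parallel edges, so the first edge of $P$ lying in $\cT_x$ does not by itself exclude $w_0'=w_1$. Your subsequent case analysis (interior vertex, or the other endpoint) already covers this, so nothing is lost.
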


\begin{proof}
The $\cT_x$-edges of $P$ are pairwise disjoint, so swapping their endpoints
defines a permutation of the columns of $\cH$.  Since every component of
$G_x(\cH)$ is an alternating cycle and the endpoint current pairs are distinct,
the current-pair edge incident with either endpoint is not contained in $P$ and
its other endpoint is not a vertex of $P$.  Thus exactly one endpoint of each
endpoint current pair is translated by $x$.  By contrast, every current pair
in the interior of $P$ contributes its current-pair edge to $P$, so both of its
endpoints are translated by $x$.  Hence an interior sum satisfies
$(a+x)+(b+x)=a+b$, while each endpoint-pair sum changes from $a+b$ to
$a+b+x$.  No other current pair is touched.
\end{proof}

Fix the decomposition $\F_2^s=H\sqcup(H+\alpha)$ from
Section~\ref{sec:prelim}, and call $H$ the left side and $H+\alpha$ the right
side.  Assume every request is crossing.  This is the same Hall instance solved
globally by a bijection $H\to H$ and constructively by \emph{BSolution} on the
full ambient space $V$.  Suppose the algorithm is processing an unfixed current
pair of sum $y\in H$ and wishes to realize $v\notin H$.  Put $x=v+y$.  Then
$x\notin H$, so every edge of $\cT_x$ crosses the hyperplane cut.  The
alternating cycle of $G_x(\cH)$ containing the current internal pair contains
a second internal current-pairing edge.  Otherwise all remaining current-pair
edges on the cycle would cross the cut; together with the $\cT_x$-edges, the
closed cycle would contain an odd number of cut-crossing edges, which is
impossible.

Every previously fixed request is crossing, so the second internal pair is
unfixed.  Removing the two internal current-pairing edges from the cycle leaves
an alternating path between them whose first and last edges belong to
$\cT_x$.  Lemma~\ref{lem:alternating-path-switch} changes the initial pair sum
from $y$ to $y+x=v$ and preserves all fixed crossing pairs in the interior.
This is the local exchange underlying \emph{BSolution}.  Hall and
\emph{BSolution} therefore solve the same all-crossing problem: Hall chooses
the complete crossing matching globally, whereas \emph{BSolution} fixes the
requests one by one and transfers the unresolved redundancy from one affine side to the other.

The transfer does not control its boundary.  In the partial-solution
interpretation, after the current request is fixed, a vertex is released on the
starting side and the good-path terminates at an unfixed pair on the opposite
side.  Which vertex is released and which terminal pair is reached depend on
the path.  This is harmless in the Hall case, where no prescribed internal
boundary must survive, and it is the precise mechanism behind the terminal
obstruction described in Section~\ref{sec:prelim}.  The constructive proofs
below succeed only when multiplicity symmetry either restores a complete Hall
instance in a quotient or provides enough redundancy to stop before a
protected distance is lost.

Under even multiplicities, quotient symmetry restores enough Hall structure to
obtain explicit constructions.  We first give a quotient proof of the
symmetric two-hole case and then treat the four-hole patterns $p,p,p,p$ and
$p,p,q,q$.  Whenever \emph{BSolution} is applied to a quotient below, it acts
on the full quotient ambient space and solves an all-crossing Hall instance
there; the subsequent parallel lift is a separate step.  The $p,p,q,q$ case
additionally uses the endpoint bookkeeping above and stops the coset-level
procedure before the protected distance can be destroyed.

\subsection{Two holes under even multiplicities}
\label{sec:two-hole-even}

Theorem~\ref{thm:two-hole} already contains the case in which every request
occurs with even multiplicity.  In this subfamily, however, the Walsh-minor
argument is unnecessary.  Quotienting by the common internal direction turns
the problem into an all-crossing Hall instance on a smaller full ambient
space.  We solve that Hall instance constructively by \emph{BSolution} and then
lift its crossing edges in parallel.

\begin{theorem}[Constructive even-multiplicity two-hole case]
\label{thm:two-hole-even}
Let $M$ be a zero-sum two-hole instance with respect to $H$, and suppose that
every request vector occurs with even multiplicity in $M$.  Then a Hadamard
pair solution can be constructed by reducing to an all-crossing Hall instance
on a quotient space, solving that quotient instance by \emph{BSolution}, and
lifting the resulting matching.
\end{theorem}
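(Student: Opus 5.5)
The plan is to pass to the quotient by the internal direction, where the instance becomes a complete all-crossing Hall instance, solve it there by \emph{BSolution}, and lift. The first step is to exploit the multiplicity hypothesis. Exactly two requests are internal, and every request vector occurs an even number of times. Hence the two internal requests must coincide, say both equal $p\in H\setminus\{0\}$. The $n-2$ crossing requests split into $(n-2)/2$ equal-valued pairs; choose one representative from each and call them $w_1,\ldots,w_{(n-2)/2}$. The case $s=2$ is handled as in the proof of Theorem~\ref{thm:two-hole}: both cosets of $H$ are paired internally in direction $p$. So assume $s\geq3$.

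Next I form the quotient. Let $V'=\F_2^s/\langle p\rangle$, of dimension $s-1$, with $|V'|=n$, and let $H'=H/\langle p\rangle$, a hyperplane of $V'$. Since $p\in H$ and $w_i\notin H$, each image $\bar w_i$ is nonzero and crosses $H'$. A perfect matching of $V'$ needs $n/2$ requests, but only $n/2-1$ are available. I therefore add one more request
\[
    \bar w_0=\sum_{i=1}^{(n-2)/2}\bar w_i ,
\]
which makes the list zero-sum in $V'$. The key check is that $\bar w_0$ is again crossing. Because $s\geq3$, the number $(n-2)/2=2^{s-2}-1$ is odd, and a sum of an odd number of vectors outside $H'$ lies outside $H'$. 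In particular $\bar w_0\neq0$. Thus $[\bar w_0,\ldots,\bar w_{(n-2)/2}]$ is a zero-sum all-crossing instance with respect to $H'$ in $V'$. This is case~(iii) of Theorem~\ref{thm:known-solvable-families} (equivalently Corollary~\ref{cor:hyperplane-matching}), so \emph{BSolution} constructs a perfect matching of $V'$ realizing it.

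Finally I lift the quotient matching. The edge $\{\bar a,\bar b\}$ realizing $\bar w_0$ crosses $H'$, so one fiber $\{a,a+p\}$ lies in $H$ and the other fiber $\{b,b+p\}$ lies in $H+\alpha$. I take these two fibers as the two internal $p$-edges; these are the two holes. For every other edge $\{\bar x,\bar y\}$ with difference $\bar w_i$, I pick $x$ in the fiber of $\bar x$ and put $y'=x+w_i$, whose image is $\bar y$. This gives the two parallel edges $\{x,x+w_i\}$ and $\{x+p,x+p+w_i\}$, which cover both fibers and realize the two copies of $w_i$. The fibers partition $\F_2^s$, so the lifted edges form a Hadamard pair solution for $M$. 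I expect the main obstacle to be the choice of the auxiliary request $\bar w_0$. It must simultaneously restore the zero-sum condition and remain crossing so that Hall/\emph{BSolution} applies on the full quotient, and this is exactly the odd-count parity argument above. The remaining lifting verification is routine.
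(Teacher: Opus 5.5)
Your proposal is correct and follows the paper's proof essentially step for step: the even-multiplicity hypothesis forces the holes to be $p,p$; you pass to the quotient by $\langle p\rangle$; you add the auxiliary request $\overline w_0$, which is crossing because $2^{s-2}-1$ is odd; you run \emph{BSolution} on the resulting all-crossing quotient instance; and you lift in parallel, using the two fibers of the auxiliary edge as the internal $p$-edges. The only difference is notation ($w_i,\overline w_0$ in place of the paper's $v_t,\overline u$).
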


\begin{proof}
Since exactly two requests lie in $H$ and every request has even multiplicity,
the two holes are equal.  Write them as
\[
    p,p,
    \qquad
    p\in H\setminus\{0\}.
\]
If $s=2$, then there are no crossing requests; pair the two vertices of $H$
and the two vertices of $H+\alpha$ in direction $p$.  Hence assume $s\geq3$.
The crossing requests also occur with even multiplicity, so we may write them
as
\[
    v_1,v_1,\;
    v_2,v_2,\;
    \ldots,\;
    v_m,v_m,
    \qquad
    v_t\in H+\alpha,
\]
where
\[
    m=\frac{n-2}{2}=\frac n2-1.
\]

Let
\[
    L=\langle p\rangle=\{0,p\}
\]
and pass to the quotient
\[
    \overline V=\F_2^s/L.
\]
Since $p\in H$, the image
\[
    \overline H=H/L
\]
is a hyperplane of $\overline V$.  Put
\[
    \overline\alpha=\alpha+L.
\]
Every
\[
    \overline v_t=v_t+L
\]
lies outside $\overline H$.

Put
\[
    \overline u
    =
    \sum_{t=1}^{m}\overline v_t.
\]
The number $m=2^{s-2}-1$ is odd, and every $\overline v_t$ is crossing with
respect to $\overline H$.  Hence $\overline u$ is also crossing and in
particular nonzero.  The quotient request list
\[
    \overline v_1,\ldots,\overline v_m,\overline u
\]
has
\[
    m+1=\frac n2=2^{s-2}
\]
requests and total sum zero.  All of them lie outside the hyperplane
$\overline H$.

Apply the constructive \emph{BSolution} algorithm to this quotient instance.
It produces a perfect matching of $\overline V$ realizing
\[
    \overline v_1,\ldots,\overline v_m,\overline u.
\]
Delete the quotient edge realizing $\overline u$.  Because $\overline u$ is
crossing, the two unmatched quotient vertices are one coset
\[
    X_0\in\overline H
\]
and one coset
\[
    Y_0\in\overline H+\overline\alpha.
\]

Consider any remaining quotient edge
\[
    \{X,Y\}
\]
realizing $\overline v_t$.  Choose $x\in X$.  Since
\[
    X+Y=\overline v_t,
\]
the coset $Y$ is
\[
    x+v_t+L.
\]
Thus the two parallel edges
\[
    \{x,x+v_t\},
    \qquad
    \{x+p,x+p+v_t\}
\]
lift the quotient edge and realize two copies of $v_t$.  Lifting every
remaining quotient edge in this way realizes all crossing requests and uses
every vector except the two $p$-cosets corresponding to $X_0$ and $Y_0$.

Write these two unused cosets as
\[
    X_0=\{x_0,x_0+p\},
    \qquad
    Y_0=\{y_0,y_0+p\}.
\]
Pair internally inside each coset:
\[
    \{x_0,x_0+p\},
    \qquad
    \{y_0,y_0+p\}.
\]
Both edges have difference $p$, and therefore realize the two holes.  Together
with the lifted quotient matching, they give a Hadamard pair solution for
$M$.
\end{proof}

\begin{remark}[Algorithmic content]
The proof is not merely existential.  The role of \emph{BSolution} is exactly
the same as in Hall: it acts on the full quotient ambient space
$\overline V$ and constructs the all-crossing quotient matching.  The passage
from a quotient edge to two parallel edges of $V$ is the subsequent lifting
step and is not part of \emph{BSolution} itself.
\end{remark}

\subsection{Four holes under even multiplicities}
\label{sec:four-hole-even}

Suppose exactly four requests lie in $H$ and every request vector in the
full instance occurs with even multiplicity.  The four internal requests are
then either four copies of one vector or two copies each of two distinct
vectors.  In this symmetric subfamily, the quotient and alternating-path
mechanisms give a direct constructive solution.

\begin{theorem}[Constructive even-multiplicity four-hole case]
\label{thm:four-hole-even}
Let $M$ be a zero-sum four-hole instance with respect to $H$, and suppose that
every request vector occurs with even multiplicity in $M$.  Then $M$ has a
Hadamard pair solution that can be constructed explicitly.
\end{theorem}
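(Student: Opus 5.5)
The plan is to reuse the quotient-and-lift mechanism of Theorem~\ref{thm:two-hole-even}. The only new ingredient is deciding which quotient instance to solve and how the leftover cosets absorb the four holes. First I normalize the data. Every request vector has even multiplicity and exactly four requests lie in $H$, so the holes are $p,p,p,p$ or $p,p,q,q$ with $p\neq q$. The crossing requests can be written as $v_1,v_1,\ldots,v_m,v_m$ with $v_t\in H+\alpha$ and $m=n/2-2$, which is even once $s\geq4$. The case $s=3$, where all four requests are internal and $n=4$, is handled by direct inspection. Put $w=\sum_{t=1}^m v_t$; since $m$ is even, $w\in H$.

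\emph{Pattern $p,p,p,p$.} Pass to $\overline V=\F_2^s/\langle p\rangle$ with hyperplane $\overline H$. The quotient list $\overline v_1,\ldots,\overline v_m$ has $n/2-2$ crossing requests and sum $\overline w\in\overline H$. I complete it with two crossing requests $\overline u_1=\overline\alpha$ and $\overline u_2=\overline\alpha+\overline w$. Both are nonzero, so the completed list is an all-crossing zero-sum Hall instance of size $n/2$ on $\overline V$. Solve it by \emph{BSolution} and delete the two edges realizing $\overline u_1,\overline u_2$. Because both are crossing, this frees two $p$-cosets in $H$ and two in $H+\alpha$. Every other quotient edge lifts to the two parallel edges $\{x,x+v_t\}$ and $\{x+p,x+p+v_t\}$, exactly as in the proof of Theorem~\ref{thm:two-hole-even}. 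The four free $p$-cosets are then each paired internally in direction $p$, which realizes the four holes.

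\emph{Pattern $p,p,q,q$.} Here the leftover cosets must carry two different directions, and I would bookkeep as follows. Choose a line $L=\langle c\rangle$ with $c\in\{p,q,p+q\}$. Two $L$-cosets $\{x,x+c\},\{y,y+c\}$ on one side split into two $p$-edges when either $c=p$, or $y+x\in\{p,q\}$ with $c=p+q$. The analogous statement holds for $q$. So I would solve a \emph{two-hole} instance in the quotient by $L$: the crossing requests are $\overline v_1,\ldots,\overline v_m$, and the two internal requests $\overline d_L,\overline d_R$ are the coset distances needed on the left and right. The zero-sum condition forces $\overline d_L+\overline d_R=\overline w$. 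The case analysis is as follows.
\begin{enumerate}[label=\textup{(\alph*)}]
    \item Take $c=p$, $\overline d_R=\overline q$ and $\overline d_L=\overline w+\overline q$. This works whenever $w\notin\{q,p+q\}$: the left side splits into two $p$-edges inside the cosets, and the right side into two $q$-edges across them.
    \item Symmetrically, take $c=q$; this works whenever $w\notin\{p,p+q\}$.
    \item The only remaining value is $w=p+q$. Then take $c=p+q$, so that $\overline p=\overline q$ in the quotient, with $\overline d_L=\overline d_R=\overline p$. This is consistent because $\overline w=0$. The left pair of cosets then splits into two $p$-edges, and the right pair into two $q$-edges.
\end{enumerate}
In each case the quotient instance is a zero-sum two-hole instance in dimension $s-1$. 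Its holes are nonzero, since $\overline d_L\neq0$ is exactly the excluded condition. Theorem~\ref{thm:two-hole} therefore solves it. Lifting the crossing edges in parallel and splitting the leftover cosets as described gives a solution of $M$.

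The main obstacle is the word ``explicitly'' in the $p,p,q,q$ case. My reduction invokes the existential Theorem~\ref{thm:two-hole} in the quotient rather than an all-crossing Hall instance, so it does not by itself yield a construction. To make it constructive I would try to replace that step as follows. Run \emph{BSolution} at coset level on the all-crossing quotient instance obtained by completing $\overline v_1,\ldots,\overline v_m$ with crossing requests, as in the $p,p,p,p$ case. Stop this procedure before its final exchange, because the final exchange is the one that could destroy the protected coset distance $\overline q$. Then finish with a single point-level alternating-path switch from Lemma~\ref{lem:alternating-path-switch}, using the global sum condition to certify that the last free pair has the required difference. Controlling that last free pair is the part I expect to require the most care.
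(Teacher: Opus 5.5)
Your $p,p,p,p$ case is exactly the paper's argument. Your $p,p,q,q$ reduction is a correct \emph{existence} proof by a genuinely different route:
\begin{itemize}
\item the splittings of two $\langle c\rangle$-cosets are as you state;
\item the condition $\overline d_L\neq0$ excludes exactly $w\in\{q,p+q\}$ when $c=p$, and exactly $w\in\{p,p+q\}$ when $c=q$;
\item the leftover value $w=p+q$ is handled by $c=p+q$;
\item it does not matter on which side Theorem~\ref{thm:two-hole} places each quotient hole, because the splitting of a pair of cosets depends only on their distance.
\end{itemize}
This shows that the even four-hole family is a corollary of the two-hole theorem one dimension lower.

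The gap, as you suspect, is the word \emph{explicitly}. Theorem~\ref{thm:two-hole} is proved by the nonvanishing of a Walsh transform and yields no construction. So for $p,p,q,q$ your argument does not prove the statement as posed. Your repair sketch has the right outline but is missing the mechanism that makes it work.

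Concretely, the paper does not complete to an all-crossing instance. An unmodified \emph{BSolution} run on such a list has no protected pair, so nothing keeps two free cosets at distance $\overline q$. Instead, the paper proceeds as follows.
\begin{itemize}
\item With $L=\langle p\rangle$, it keeps $\{A,A+\overline q\}$ as an unfixed \emph{internal} current pair on the left.
\item At each step, while $k\geq4$ free quotient vertices remain per side, it re-pairs the unfixed vertices so that a second left internal pair $E$ exists.
\item It starts the good-path at $E$ and stops at the first internal current pair on the right. Translation edges cross the cut, and the traversal returns to the left only through crossing current pairs. So cut parity guarantees that this terminal pair exists, it is unfixed, and the protected pair is never touched.
\item Only $\overline v_1,\ldots,\overline v_{m-1}$ are fixed this way. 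This leaves $A,A+\overline q,R$ on the left and $B_1,B_2,B_3$ on the right.
\end{itemize}

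The last step then needs no endpoint control.
\begin{itemize}
\item Pair $R$ and each $B_i$ internally, and pair $A\cup(A+\overline q)$ by two $q$-edges.
\item Perform one point-level switch (Lemma~\ref{lem:alternating-path-switch}) from $R$ for a single copy of $v_m$. It disturbs at most one $B_i$.
\item Two intact right $p$-cosets then give $p,p$, and the protected cosets give $q,q$.
\item The two remaining points sum to $v_m$ by the global zero-sum.
\end{itemize}
The final matching need not be a parallel lift of a quotient matching. So the relation $\overline d_L+\overline d_R=\overline w$, which forced your case split on $w$, never arises. That is why the paper can always take $c=p$.
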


\begin{proof}
Because the four holes have even multiplicities, there are two cases.

\medskip
\noindent\textbf{Case 1: the four holes are $p,p,p,p$.}
Put
\[
    L=\langle p\rangle=\{0,p\}
\]
and pass to the quotient
\[
    \overline V=\F_2^s/L.
\]
Group the crossing requests into identical pairs
\[
    v_1,v_1,\;\ldots,\;v_m,v_m.
\]
Each pair descends to one crossing quotient request
\[
    \overline v_t=v_t+L.
\]
Let
\[
    S=\sum_{t=1}^{m}\overline v_t.
\]
Since the number of quotient requests is even, $S$ lies in the quotient
hyperplane $\overline H=H/L$.  Choose any crossing vector $w\notin\overline H$
and append the two crossing quotient requests
\[
    w,
    \qquad
    w+S.
\]
The completed quotient list is zero-sum and all its requests are crossing.
Apply \emph{BSolution} in $\overline V$.  Delete the two quotient edges
assigned to the auxiliary requests and lift every remaining quotient edge in
parallel: if
\[
    X=\{x,x+p\}
\]
is paired with $X+\overline v_t$, use
\[
    \{x,x+v_t\},
    \qquad
    \{x+p,x+p+v_t\}.
\]
These lifted edges realize all crossing requests.  The two deleted quotient
edges leave four unused $p$-cosets.  Pair internally inside each of them; the
four resulting edges all have difference $p$ and realize the four holes.

\medskip
\noindent\textbf{Case 2: the holes are $p,p,q,q$ with $p\neq q$.}
If $s=3$, then $n=4$ and there are no crossing requests.  Pair $H$ by the two
edges of direction $p$ and pair $H+\alpha$ by the two edges of direction $q$.
Hence assume $s\geq4$.

Write the crossing requests as
\[
    v_1,v_1,\;
    v_2,v_2,\;
    \ldots,\;
    v_m,v_m,
    \qquad
    v_t\in H+\alpha,
\]
where $2m+4=n$ and $m=n/2-2\geq1$.  Put
$L=\langle p\rangle=\{0,p\}$ and let
$\overline V=\F_2^s/L$, $\cK_0=H/L$, and
$\cK_1=(H+\alpha)/L$.  Write $\overline v_t=v_t+L$ and
$\overline q=q+L$.  Every $\overline v_t$ is crossing with respect to the
quotient hyperplane $\cK_0$, whereas $\overline q$ is a nonzero internal
direction.  A quotient edge realizing $\overline v_t$ lifts to the two
parallel point-level edges realizing $v_t,v_t$.

We first fix the labeled quotient requests
$\overline v_1,\ldots,\overline v_{m-1}$ while protecting two left quotient
vertices at difference $\overline q$.  Choose $A\in\cK_0$ and protect the
distinct vertices $A$ and $A+\overline q$.  Suppose $j<m-1$ quotient requests
have already been fixed.  Exactly $k=n/2-j$ unfixed quotient vertices remain
on each side, and throughout this stage $k\geq4$.  Re-pair the unfixed
vertices arbitrarily, without changing any fixed pair, so that
$\{A,A+\overline q\}$ is a current pair and there is another current internal
pair $E$ on the left disjoint from it.  Such a temporary perfect matching
always exists.  If $k$ is even, pair the remaining left vertices and all right
vertices internally.  If $k$ is odd, then $k\geq5$; after choosing the
protected pair and $E$, use one temporary crossing pair and pair all remaining
vertices internally.

Let $y$ be the current difference of $E$ and, for the next request
$\overline v_{j+1}$, put $x=y+\overline v_{j+1}$.  Since $y$ is internal and
$\overline v_{j+1}$ is crossing, $x$ is crossing.  Traverse the alternating
cycle of $G_x$ from $E$ starting with a translation edge.  Every translation
edge crosses the hyperplane cut.  Consequently, until an internal current pair
is met, each current crossing pair returns the traversal to the left and the
next translation edge moves it back to the right.  Thus the first internal
current pair encountered after leaving $E$ lies on the right.  Such a pair
exists by the same cut-parity argument as in the Hall case.  It is unfixed,
because every fixed quotient request is crossing.  The path from $E$ to this
first right internal pair has distinct endpoint current pairs and avoids the
protected left pair.  Apply Lemma~\ref{lem:alternating-path-switch}.  The sum
of $E$ becomes $y+x=\overline v_{j+1}$; declare this endpoint pair fixed.  All
previously fixed pairs are preserved, the other endpoint remains unfixed, and
the protected pair is untouched.  This closes the induction.

After the first $m-1$ quotient requests have been fixed, exactly three
$p$-cosets remain unused on each side.  On the left they are the two protected
cosets $A,A+\overline q$ and one additional coset $R$, whose position is not
controlled.  On the right denote the three unused $p$-cosets by
$B_1,B_2,B_3$.  Lifting the fixed quotient edges in parallel realizes
$v_1,v_1,\ldots,v_{m-1},v_{m-1}$.

This is the point at which one must stop the coset-level algorithm.  A further
coset-level \emph{BSolution} step for the remaining pair $v_m,v_m$ would require two
left redundant cosets.  Since only $R$ is unprotected, one of
$A,A+\overline q$ would necessarily be used.  Suppose, for example, that the
step starts from $R$ and $A$.  The good-path does not return $A$ at its end;
it releases an uncontrolled coset $A'$ on the left.  The remaining protected
candidate is then $A',A+\overline q$, and there is no reason for these two
cosets to remain at difference $\overline q$.  The distance that was preserved
throughout the coset-level stage would be lost.

Instead, leave the quotient level and perform one ordinary point-level
\emph{BSolution} step for only one copy of $v_m$.  Write $R=\{r,r+p\}$.
Temporarily pair the two points of $R$ together, pair each of the three unused
right $p$-cosets internally, and pair the four points of
$A\cup(A+\overline q)$ by the two parallel $q$-edges.  These temporary pairs
do not alter any previously fixed crossing pair.  Start the point-level
\emph{BSolution} step from the internal pair $R$ and stop at the first internal
pair reached on the right.  The same cut-parity argument as above gives such a
good-path and ensures that the two protected $q$-edges on the left are not
touched.  The step fixes one copy of $v_m$ and preserves all previously fixed
crossing requests.  Its exact right endpoint is not controlled, and no such
control is needed here.

The endpoint exchange can disturb at most one of the three right $p$-cosets,
so two complete right $p$-cosets remain unused; pairing inside them realizes
$p,p$.  The two protected left pairs already realize $q,q$.  Writing
$A=\{a,a+p\}$, these are $\{a,a+q\}$ and
$\{a+p,a+p+q\}$.

After these four internal pairs are added, exactly one unused point remains on
the left and one on the right.  All requests have been realized except for the
second copy of $v_m$.  Since the sum of all vertices of $V$ is zero and the
total request sum is zero, the sum of the final two unused vertices must be
$v_m$.  Their pair therefore realizes the second copy of $v_m$.  Together
with the lifted quotient matching and the point-level \emph{BSolution} step, this
gives a Hadamard pair solution for $M$.
\end{proof}

\begin{remark}[Why the $p,p,q,q$ argument succeeds]
The proof does not gain control over the endpoint of the good-path.  Instead, it
avoids needing that control.  Two left $p$-cosets at difference
$\overline q$ are protected until the coset-level stage reaches the $3+3$
configuration.  The last coset-level step is deliberately not performed,
because it would necessarily destroy that distance.  A point-level \emph{BSolution}
step is then run from the third, uncontrolled left coset and is used for only
one copy of $v_m$.  Two complete right $p$-cosets survive, while the second
copy of $v_m$ is forced by the global sum.  This is precisely the extra
symmetry absent from the general two-hole problem.
\end{remark}

\ifshowopenproblems
\section{Further Punctured Hall Problems}
\label{sec:beyond-four}

The two-hole theorem shows that the Hall character method survives the first
nontrivial boundary defect.  Conceptually, the full Hall determinant is not
abandoned: internal requests replace it by complementary minors of the same
group circulant.  For $h$ internal requests, a solution would require choosing
$h$ deleted vertices on each affine side and proving nonvanishing of an
$(n-h)\times(n-h)$ circulant minor with the prescribed crossing profile.  The
complementary-minor expansion then involves $h\times h$ Walsh minors and
products with $h$ character factors deleted.

The case $h=2$ is special because a $2\times2$ Walsh minor has a single
character-difference parameter, and the surviving deleted coefficients form a
full Walsh transform.  For larger $h$, several affine configurations of the
deleted characters can occur, so one should not expect the two-hole proof to
iterate verbatim.  Nevertheless, the exact Hall recursion and the deleted
coefficient calculation suggest two concrete directions: exploiting the
binary ladder $h=2^r$, and identifying multiplicity conditions under which the
higher deleted coefficients collapse to lower-dimensional character
problems.

Other low-complexity parameters may cut across the puncture hierarchy.  The
rank-one case is the all-equal family, while full rank allows the unrestricted
problem, suggesting the intermediate-rank regime as a separate direction.
Likewise, Kov\'acs' few-difference theorem leaves open what happens just beyond
the threshold $s-2\log_2s-1$ before one reaches the pairwise-distinct regime.
At the opposite hyperplane extreme, if every request lies in $H$, the Hall
permutation disappears completely: the requests must be divided between the
two affine sides and realized by two simultaneous internal pair decompositions.
These regimes emphasize that the difficulty of BGS is not governed by
multiplicity or by the number of holes alone.

The character-minor framework developed here supplies a new structural
parameter---the number and geometry of the deleted Hall vertices---and an
arithmetic invariant that survives cancellation.  The present paper settles
the first punctured level.  Understanding which higher deleted-character
configurations admit equally sharp noncancellation appears to be a natural
next step toward the full BGS conjecture.
\else
\section{Conclusion}
\label{sec:beyond-four}

The two-hole theorem isolates the first nontrivial failure of the complete
Hall permutation.  With all requests crossing, the problem is encoded by the
full group circulant.  Two internal requests remove two vertices from each
affine side, so the crossing core is instead encoded by an unknown
complementary minor of the same matrix.  The main point of the present paper
is that this boundary defect can still be detected globally.

The character method first gives an exact noncancellation theorem for the
binary Hall case: every zero-sum determinant coefficient has the same
$2$-adic valuation as the corresponding multinomial coefficient.  Passing to
two deleted character factors then produces the compatibility filter needed
for the two-hole boundary.  The final Walsh transform does not construct the
deleted vertices locally; rather, its invertibility forces the existence of a
boundary choice for which the required minor coefficient is nonzero.

This distinction also clarifies the role of the constructive arguments.
\emph{BSolution} keeps the two affine sides explicit and is particularly
natural for the all-crossing case, but its local exchange does not control the
terminal free boundary.  The general two-hole theorem therefore uses the
global character-minor argument.  Under additional even-multiplicity symmetry,
quotient structure restores enough control to obtain direct constructive
solutions.

Thus the two-hole problem is not merely Hall with two exceptional requests.
It is the first case in which the complete permutation is punctured at unknown
locations and the geometry of the missing boundary becomes part of the
matching problem itself.  The complementary-minor viewpoint is designed
precisely for this first punctured Hall structure, and the theorem above gives
a complete solution without a multiplicity assumption.
\fi

\end{document}